\definecolor{cite}{HTML}{11871E}
\definecolor{url}{HTML}{698996}
\definecolor{link}{HTML}{912F1B}
\definecolor{greencolor}{rgb}{0,0.45,0}
\definecolor{greenbluecolor}{rgb}{0,0.45,0.1}
\definecolor{ucphcolor}{rgb}{0.517,0.016,0.016}
\definecolor{OliveGreen}{rgb}{0,0.6,0}
\tikzstyle{arrow} = [-{Straight Barb[scale=0.8]}, line width=0.2mm]
\tikzset{
math to/.tip={Glyph[glyph math command=rightarrow]},
loop/.tip={Glyph[glyph math command=looparrowleft, swap]},
}
\Crefname{prop}{Proposition}{Propositions}
\Crefname{lem}{Lemma}{Lemmas}
\Crefname{cor}{Corollary}{Corollaries}
\Crefname{thm}{Theorem}{Theorems}
\Crefname{alphThm}{Theorem}{Theorems}
\Crefname{defn}{Definition}{Definitions}
\Crefname{notation}{Notation}{Notations}
\Crefname{cons}{Construction}{Constructions}
\Crefname{rmk}{Remark}{Remarks}
\Crefname{obs}{Observation}{Observations}
\Crefname{trick}{Trick}{Tricks}
\Crefname{warning}{Warning}{Warnings}
\Crefname{conj}{Conjecture}{Conjectures}
\Crefname{assump}{Assumption}{Assumptions}
\Crefname{recollect}{Recollection}{Recollections}
\Crefname{terminology}{Terminology}{Terminologies}
\Crefname{question}{Question}{Questions}
\Crefname{example}{Example}{Examples}
\Crefname{figure}{Figure}{Figures}
\newtheorem{thm}{Theorem}[subsubsection]
\newtheorem{prop}[thm]{Proposition}
\newtheorem{lem}[thm]{Lemma}
\newtheorem{cor}[thm]{Corollary}
\newtheorem{alphThm}{Theorem}
\newcommand{\neutralize}[1]{\expandafter\let\csname c@#1\endcsname\count@}
\newtheorem*{thm*}{Theorem}
\newtheorem*{prop*}{Proposition}
\newtheorem*{lem*}{Lemma}
\newtheorem*{cor*}{Corollary}
\newtheorem{alphConj}{Conjecture}
\newtheorem{alphCor}{Corrollary}
\theoremstyle{definition}
\newtheorem*{defn*}{Definition}
\newtheorem{defn}[thm]{Definition}
\newtheorem{cons}[thm]{Construction}
\newtheorem{nota}[thm]{Notation}
\newtheorem{recollect}[thm]{Recollections}
\newtheorem{question}[thm]{Question}
\newtheorem{example}[thm]{Example}
\newtheorem{strategy}[thm]{Strategy}
\newtheorem{rmk}[thm]{Remark}
\newtheorem{obs}[thm]{Observation}
\theoremstyle{definition}
\theoremstyle{remark}
\newcommand{\bbZ}{\mathbb{Z}}
\newcommand{\A}{\mathcal{A}}
\newcommand{\B}{\mathcal{B}}
\newcommand{\sC}{{\mathcal C}}
\newcommand{\D}{{\mathcal D}}
\newcommand{\I}{\mathcal{I}}
\newcommand{\sL}{\mathcal{L}}
\newcommand{\spc}{\mathcal{S}}
\newcommand{\cat}{\mathrm{Cat}}
\newcommand{\presentable}{\mathrm{Pr}}
\newcommand{\spectra}{\mathrm{Sp}}
\newcommand{\op}{^{\mathrm{op}}}
\DeclareMathOperator{\map}{Map}
\DeclareMathOperator{\func}{Fun}
\DeclareMathOperator{\mapsp}{map}
\DeclareMathOperator{\presheaf}{Psh}
\newcommand{\aut}{\mathrm{Aut}}
\newcommand{\calg}{\mathrm{CAlg}}
\newcommand{\module}{\mathrm{Mod}}
\newcommand{\Pic}{\mathcal{P}\mathrm{ic}}
\newcommand{\sphere}{\mathbb{S}}
\newcommand{\unit}{\mathbb{1}}
\newcommand{\id}{\mathrm{id}}
\newcommand{\stmodSmall}{\mathrm{stmod}}
\DeclareMathOperator{\fib}{fib}
\DeclareMathOperator{\cofib}{cofib}
\DeclareMathOperator{\eval}{ev}
\DeclareMathOperator{\coev}{coev}
\DeclareMathOperator{\res}{Res}
\DeclareMathOperator{\infl}{infl}
\newcommand{\conn}{\mathrm{conn}}
\newcommand{\family}{\mathcal{F}}
\newcommand{\orbit}{\mathcal{O}}
\newcommand{\proper}{\mathcal{P}}
\newcommand{\free}{\mathrm{free}}
\newcommand{\isov}{\mathrm{isov}}
\newcommand{\hrep}{\mathcal{V}}
\newcommand{\Man}{\mathcal{M}\mathrm{an}}
\newcommand{\PD}{\mathrm{PD}}
\newcommand{\semifree}{\mathrm{sf}}
\newcommand{\PDsf}[1]{\PD^{\semifree}_{#1}}
\newcommand{\PDsfpair}[1]{\PD^{\partial, \semifree}_{#1}}
\newcommand{\PDsfisov}[1]{\PD^{\semifree}_{{#1},\mathrm{isov}}}
\newcommand{\PDsfisovpair}[1]{\PD^{\partial, \semifree}_{{#1},\mathrm{isov}}}
\newcommand{\isovstr}{\mathrm{Isov}}
\newcommand{\hrepfree}[1]{\hrep_{#1}^\mathrm{free}}
\newcommand{\spectraisotropysphere}[3]{\spc_{#1, *}^{#2}[{#3}^{-1}]}
\newcommand{\equivdualisingspectrum}[2]{D_{#1, #2}}
\newcommand{\join}{\star}
\newcommand{\udl}[1]{\underline{{#1}}}
\def\colim{\qopname\relax m{colim}}
\newcommand{\arrdisp}{0.33ex}
\newcommand{\arrdisplacementsp}{0.72ex}
\newcommand{\ardis}{\ar@<\arrdisp>}
\newcommand{\ardissp}{\ar@<\arrdisplacementsp>}
\newcommand*\cocolon{%
        \nobreak
        \mskip6mu plus1mu
        \mathpunct{}%
        \nonscript
        \mkern-\thinmuskip
        {:}%
        \mskip2mu
        \relax
}
\title{Semifree Isovariant Poincar\'e Spaces and the Gap Condition}
\author{DOMINIK KIRSTEIN}
\email{kirstein@math.lmu.de}
\address{Mathematisches Institut, Ludwig-Maximilians-Universität München, Theresienstraße 39, 80333 Munich, Germany}
\author{CHRISTIAN KREMER}
\email{kremer@mpim-bonn.mpg.de}
\address{Max Planck Institute for Mathematics, Vivatsgasse 5, 53111 Bonn, Germany}
\date{\today}
\begin{document}
\subjclass[2020]{
55P91, 
57P10, 
57R40, 
57S17 
}

\begin{abstract}
    We introduce the notion of a semifree isovariant $G$-Poincar\'e space, a homotopical notion interpolating between semifree closed smooth $G$-manifolds and the equivariant Poincar\'e spaces of \cite{HKK_PD}. 
    It carries the additional structure of an equivariant Poincar\'e embedding of the fixed points of a semifree $G$-Poincar\'e space. 
    Under suitable gap conditions on the codimension, we show that the space of isovariant structures on a semifree $G$-Poincar\'e space for a periodic finite group $G$ is highly connected, giving a useful construction tool for manifold structures on equivariant Poincar\'e spaces.
\end{abstract}
\maketitle

\tableofcontents

\section{Introduction}

\counterwithin{thm}{section}

The study and classification of group actions on closed manifolds has been a cornerstone of geometric topology throughout the development of the field. As a central example, Madsen--Thomas--Wall completely characterised those finite groups which admit a free topological action on sphere \cite{Madsen_Thomas_Wall}. Equivalently, they characterised all finite groups that occur as fundamental groups of closed manifolds whose universal cover is the sphere. This seminal work strongly relies on work of Swan \cite{Swan_periodic}, who solved the homotopical counterpart to this question, namely asking which finite groups can appear as fundamental groups of a Poincar\'e space which is covered by the sphere. We want to stress that the full program was solved by splitting it in two -- a homotopical part, that was studied by Swan by means of unstable homotopy theory, and a geometric part, for which Wall's non-simply connected surgery theory was crucial.

A substantial amount of progress has been made on the construction and classification of non-free actions as well. However, a simple procedure, such as passing to the quotient and solving a problem in nonequivariant manifold topology instead, is no longer available. 
While there has been a considerable amount of work on equivariant surgery, the homotopical side has, until recently, only sparsely been studied. 
This motivated the authors to extensively study the notion of \textit{$G$-equivariant Poincar\'e spaces} \cite{HKK_PD,HKK_Nielsen,BHKK_PDP} to lay solid foundations for the classification and study on nonfree group actions on manifolds.
For the main results of this article, we focus on \textit{semifree group actions}. Here, for a finite group $G$, a $G$-space $X$ is \textit{semifree} if for each subgroup $e \neq H \leq G$, the map $X^G \rightarrow X^H$ is an equivalence.
Equivalently, it can be built from cells of free isotropy type $G/e$ or of fixed  isotropy type $G/G$. A \textit{semifree $G$-Poincar\'e space} is a $G$-Poincar\'e space in the sense of \cite{HKK_PD} which is also semifree.

To study the moduli space of semifree $G$-manifolds $\Man_G^\mathrm{sf}$, this article considers a factorisation
\[ \Man_G^\mathrm{sf} \rightarrow \PDsfisov{G} \rightarrow \PDsf{G}. \]
The middle space is the moduli space of \textit{semifree isovariant $G$-Poincar\'e spaces}. It is motivated by the observation that automorphisms of $G$-manifolds preserve more homotopical structure than merely equivariant maps. Recall that an equivariant map of topological $G$-spaces $f \colon X \rightarrow Y$ is \textit{isovariant} if it preserves isotropy groups, i.e. $G_x = G_{f(x)}$ for each $x \in X$. Automorphisms of $G$-manifolds are isovariant maps, so we conclude that the isovariant homotopy type of $G$-manifolds, as a natural piece of structure on their equivariant homotopy type, should be taken into account in their study. We give a definition of isovariant structures adapted to our needs, and compare with Yeakel's recent work on the homotopy theory of isovariant spaces \cite{Yaekel} in \cref{rmk:yeakel}.
The following is the main result of this article.

\begin{alphThm}[{\cref{thm:main_thm}}]\label{thmintro:main_thm}
    Let $X$ be a semifree $G$-Poincar\'e space and $G$ a periodic finite group.
    Consider $k \ge -1$ such that for each component of $X^G$ and the corresponding component of $X^e$ containing it we have
    \begin{enumerate}
        \item $\dim(X^G) + 3 \le \dim(X^e)$;
        \item $k \le \dim(X^e) - 2 \dim(X^G) - 3 $.
    \end{enumerate}
    Then the space $\isovstr_G(X) = \PDsfisov{G} \times_{\PDsf{G}} \{X\}$ of isovariant structures on $X$ is $k$-connected.
\end{alphThm}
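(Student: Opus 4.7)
The plan is to recognize $\isovstr_G(X)$ as a moduli space of equivariant Poincar\'e embeddings of the fixed points $X^G$ into $X$, and then to apply a metastable Poincar\'e embedding theorem in the equivariant setting, using the periodicity of $G$ to control the normal spherical data.

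First, I would unpack the construction of $\PDsfisov{G}$ to describe the fiber $\isovstr_G(X)$ explicitly: its points should correspond to equivariant Poincar\'e decompositions $X \simeq X^G \cup_S X^{\partial}$, where $S \to X^G$ is a free-$G$ spherical fibration of fiber dimension $\dim(X^e) - \dim(X^G) - 1$ and $(X^\partial, S)$ is a $G$-Poincar\'e pair with free $G$-action. Equivalently, a point of $\isovstr_G(X)$ is an equivariant Poincar\'e embedding of $X^G$ into $X$ whose normal $G$-sphere is free. The task is then to compute the connectivity of this space of embeddings.

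Second, I would separate the problem into a normal-data part and an embedding part. The first gap hypothesis $\dim(X^G) + 3 \le \dim(X^e)$ supplies codimension at least three, which is the standard threshold needed to apply a Poincar\'e embedding theorem (it rules out local linking obstructions and allows one to work up to Poincar\'e concordance of embeddings). The periodicity of $G$ enters through Swan's theorem, guaranteeing the existence of free $G$-spherical fibrations of the requisite fiber dimension and allowing one to compute the space of such free structures on a given spherical fibration. In the metastable range this space is highly connected, so the normal-data fiber does not degrade the connectivity estimate.

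Third, I would reduce the equivariant embedding problem to a non-equivariant one by passing to the free quotient outside $X^G$. Since the $G$-action on $X^e \setminus X^G$ is free, the data $(X^\partial, S)$ descends to an ordinary Poincar\'e pair with prescribed boundary, and equivariant Poincar\'e embeddings $X^G \hookrightarrow X$ with free normal fibration correspond to ordinary Poincar\'e embeddings of $X^G$ into a Poincar\'e space of the same formal dimension as $X^e$ with specified normal fiber. The classical metastable Poincar\'e embedding theorem (in the Browder--Casson--Sullivan--Wall tradition) then yields connectivity of order $\dim(X^e) - 2\dim(X^G) - 3$, precisely matching the second gap condition.

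The main obstacle is step three: one must genuinely justify the equivariant-to-nonequivariant reduction and verify the metastable connectivity with the prescribed free-$G$ normal constraint. Care is required because the embedding space is constrained to carry \emph{free} $G$-normal spheres rather than arbitrary spherical fibrations, so one needs the space of free $G$-structures on a spherical fibration to be at least $(\dim(X^e) - 2\dim(X^G) - 3)$-connected; this is where the periodicity of $G$ is indispensable. Combining this control of normal data with the classical metastable Poincar\'e embedding theorem then produces the desired $k$-connectivity via a homotopy fiber sequence assembling normal data and embedding data into $\isovstr_G(X)$.
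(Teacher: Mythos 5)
Your high-level decomposition of the problem — separate the normal data (finding a free $G$-spherical fibration over $X^G$ whose stabilisation is the stable normal bundle) from the embedding data (finding a complement $C$), and invoke periodicity of $G$ for the former and a metastable Poincar\'e embedding theorem for the latter — is the same split the paper uses, and the connectivity bookkeeping you sketch agrees with the paper's. But your step three contains a genuine gap.

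You propose reducing the equivariant embedding problem to a nonequivariant one "by passing to the free quotient outside $X^G$," claiming that equivariant Poincar\'e embeddings of $X^G$ into $X$ with free normal sphere correspond to ordinary Poincar\'e embeddings of $X^G$ into a Poincar\'e space of formal dimension $\dim(X^e)$. This reduction fails as stated. The orbit space $X^e/G$ is not a Poincar\'e space (the isotropy is nontrivial along $X^G$, and the local coefficient systems don't behave), so it cannot serve as the ambient space; and the quotient $\partial C / G \to X^G$ of the normal sphere bundle has space-form fibers rather than spherical fibers, so it is not a spherical fibration. Thus there is no obvious nonequivariant Poincar\'e embedding problem for the quotient data, and the invocation of the Browder--Casson--Sullivan--Wall machinery doesn't have a well-posed target. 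The paper's actual route is the opposite: it \emph{forgets} the $G$-action on the total spaces to obtain a nonequivariant embedding problem $X^G \hookrightarrow X^e$, solves that by Klein's theorem (\cref{thm:nonequivariant_existence_embeddings}), and then \emph{lifts} the nonequivariant solution to an equivariant one by cell-by-cell obstruction theory along the relative skeleta of $(X, X^G)$ (\cref{thm:solution_equivariant_complement_problem}, \cref{cor:existence_relative_complement_problem}), using Blakers--Massey to control the liftability of each attaching map. This equivariant lifting argument is the heart of the construction and is absent from your proposal.

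Two smaller points. First, the passage from $\pi_n(\isovstr_G(X))$ to relative isovariant structures on $X \times (D^{n+1}, S^n)$ needs justification; the paper supplies it in \cref{lem:triviality_of_isovariant_structure}, and without it you can't straightforwardly reduce higher connectivity to an existence-type statement. Second, the connectivity bound you assert for the moduli of free-$G$ normal structures — that the destabilisation map is $(d^e - d^G - 1)$-connected — is a genuine theorem (\cref{thm:destabilisations_of_free_hreps}) requiring the custom category of semifree $G$-spectra and an equivariant Freudenthal argument, not a direct corollary of Swan's classification; Swan's theorem only handles $\pi_0$. Your appeal to "periodicity is indispensable" is correct in spirit but underestimates how much work this destabilisation step needs. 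There is also a step you omit entirely: one must verify that the spherical fibration produced by Klein's embedding theorem agrees with the destabilisation found in the normal-data step, which the paper handles via the Freudenthal connectivity of nonequivariant join stabilisation.
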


For $k=-1$, $k$-connected means nonempty.
To clarify the inequalities occuring in the theorem, let us recall that a Poincar\'e space has a \textit{dimension}, a natural number valued function on its components. The inequalities in \cref{thm:main_thm} should be read as inequalities on the dimension function of $X^G$ and that of $X^e$ restricted to $X^G$ along the inclusion. The second condition is usually referred to as a \textit{gap hypothesis}.

Next, we give our definition of an isovariant structure on a semifree $G$-Poincar\'e space, before explaining how an semifree smooth closed $G$-manifold gives rise to such a structure on its underlying $G$-Poincar\'e space.

\begin{defn}
    \label{def:isov_structure}
    Given a semifree $G$-Poincar\'e space $X$, an isovariant structure on $X$ is a pushout of compact $G$-spaces
\begin{equation}\label{diag:isovariation_pushout}
    \begin{tikzcd}
        \partial C \ar[d, "p"] \ar[r] \ar[dr, phantom, very near end, "\ulcorner"] 
        & C \ar[d]
        \\
        X^G \ar[r]
        & X,
\end{tikzcd}
\end{equation}
where the lower horizontal morphism is inclusion of the fixed points of $X$, subject to the following conditions.
\begin{enumerate}
    \item The map $p$ is an equivariant spherical fibration.
    \item The $G$-action on both $C$ and $\partial C$ is free.
    \item The pair $(C^e, \partial C^e)$ is a nonequivariant Poincar\'e pair.
\end{enumerate}
\end{defn}
Suppose that the semifree $G$-Poincar\'e space $X$ admits an isovariant structure and that the codimension $\dim(X^e)-\dim(X^G)$ is at least 1, i.e., $X^G \to X^e$ is not just an inclusion of components.
Then the finite group $G$ freely acts on the spheres arising as the fibres of $p$, which forces it to be periodic.
This explains why the assumption that $G$ is periodic in \cref{thmintro:main_thm} is necessary. 

In practice, an advantage of isovariant $G$-Poincar\'e spaces over equivariant $G$-Poincar\'e spaces is that the decomposition into the free part and the fixed part allows one to apply surgery theoretic techniques to both parts separately.
Let us also mention that \cref{thm:main_thm} gives the best currently available method to classify a good amount of semifree equivariant Poincar\'e spaces, because the decomposition into a free part and a fixed part allows to phrase it in terms of classifications of nonequivariant Poincar\'e duality spaces and pairs.
We proceed by giving two immediate geometric applications to the study of group actions on manifolds.

\subsection*{Application: The Browder--Straus theorem}
\setcounter{thm}{1}

As one application of \cref{thm:main_thm} we show that it recovers a classical theorem on isovariant maps between smooth closed $G$-manifolds, under a slightly stronger gap hypothesis.
Let $M$ be a closed semifree smooth $G$-manifold. It has an underlying semifree isovariant $G$-Poincar\'e space, described as follows.

\begin{cons}
    The inclusion of the fixed points $\epsilon \colon M^{G} \rightarrow M$ is a smooth embedding. We write $\nu$ for its normal bundle, and $S\nu$ for the unit sphere bundle in that normal bundle, after a choice of an equivariant Riemannian metric, and $D\nu$ for the associated disc bundle. A choice of an appropriate equivariant tubular neighborhood defines an embedding $D\nu \subset M$, restricting to the identity on $M^{G}$. On underlying $G$-spaces, we get a pushout in $\spc_{G}^{\omega}$ as follows.
\begin{equation}
    \begin{tikzcd}
        S\nu \ar[r] \ar[d] & M \setminus M^{C_p} \ar[d] \\
        M^{G} \simeq D\nu \ar[r] & M
    \end{tikzcd}
\end{equation}
This square defines a semifree isovariant structure on the $G$-Poincar\'e space underlying $M$.
\end{cons}

Using a comparison to the homotopy theory of isovariant spaces developed by Yeakel and Klang--Yeakel that we give in \cref{rmk:yeakel}, our result recovers the following version of the Browder--Straus-theorem, see \cite{Schultz_Gap}. 

\begin{cor}
    Let $G$ be a periodic group and let $M$ and $N$ be semifree closed smooth $G$-manifolds. Assume that $\dim M^e - \dim M^G \geq 3$. Then
    \begin{enumerate}
        \item if $2\dim M^G + 3 \leq \dim M^e$, any $G$-equivariant homotopy equivalence $f \colon M \rightarrow N$ may be lifted to an isovariant one;
        \item if $2\dim M^G + 4 \leq \dim M^e$, any two $G$-isovariant homotopy equivalences $f \colon M \rightarrow N$ which are equivariantly homotopic, are isovariantly homotopic.
    \end{enumerate}
\end{cor}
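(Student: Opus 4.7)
The plan is to deduce the corollary from \cref{thmintro:main_thm} by translating the Browder--Straus statements into connectivity assertions for the space of isovariant structures on the underlying semifree $G$-Poincaré space of $M$. Let $X$ denote the semifree $G$-Poincaré space underlying $M$. The construction above produces a canonical point $s_M \in \isovstr_G(X)$ from the equivariant tubular neighborhood of $M^G \subset M$, and similarly a point $s_N$ in $\isovstr_G(N)$. Given an equivariant homotopy equivalence $f \colon M \to N$, the equivalence $f$ and the resulting identification $X \simeq N$ of underlying $G$-Poincaré spaces allow us to transport $s_N$ back to a second isovariant structure $f^{\ast} s_N \in \isovstr_G(X)$.

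Next, using the comparison to Yeakel's isovariant homotopy theory recorded in \cref{rmk:yeakel}, I would identify an isovariant refinement of $f$ with a path in $\isovstr_G(X)$ from $s_M$ to $f^{\ast} s_N$, and identify an isovariant homotopy between two such refinements with a path between the corresponding paths (that is, a path in the based path space $P_{s_M, f^{\ast} s_N}\isovstr_G(X)$). Granting this translation, the two assertions become purely homotopical statements about $\isovstr_G(X)$:
\begin{enumerate}
    \item existence of an isovariant lift is equivalent to $s_M$ and $f^{\ast} s_N$ lying in the same path component of $\isovstr_G(X)$;
    \item uniqueness up to isovariant homotopy of lifts is equivalent to $\pi_1(\isovstr_G(X))$ being trivial, i.e., the space being $1$-connected.
\end{enumerate}

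With this translation in hand, I would invoke \cref{thmintro:main_thm}, noting that $f$ being an equivalence gives $\dim X^G = \dim M^G$ and $\dim X^e = \dim M^e$, so the gap hypothesis $\dim M^e - \dim M^G \geq 3$ matches condition (1) of the main theorem. Taking $k = 0$ in \cref{thmintro:main_thm} requires $\dim M^e \geq 2\dim M^G + 3$ and provides path-connectedness of $\isovstr_G(X)$, proving (1). Taking $k = 1$ requires $\dim M^e \geq 2\dim M^G + 4$ and provides $1$-connectedness, proving (2).

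The main obstacle is the identification in the second step between isovariant maps and homotopies of smooth $G$-manifolds on the one hand, and paths in the moduli of isovariant structures on the other. This requires that the construction sending a smooth manifold to its tubular-neighborhood isovariant structure is compatible with the comparison functor of \cref{rmk:yeakel}, and that the latter identifies mapping spaces between isovariant $G$-Poincaré spaces with the expected fibers of the forgetful map $\PDsfisov{G} \to \PDsf{G}$. Once this naturality is established, the corollary follows by applying \cref{thmintro:main_thm} for the two relevant values of $k$.
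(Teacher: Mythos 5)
Your proposal is correct and is essentially the approach the paper has in mind: it gives only a one-sentence pointer to \cref{rmk:yeakel} and \cref{thmintro:main_thm} here, and your argument supplies the intermediate translation. The translation of statement (1) into $\pi_0$-surjectivity/path-connectedness and of statement (2) into $\pi_1$-vanishing of the fibre $\isovstr_G(X)$ is the right mechanism, and your dimension arithmetic ($k=0$ for (1), $k=1$ for (2)) matches the bounds in \cref{thm:main_thm} exactly; note in passing that the gap inequality $2\dim M^G + 3 \le \dim M^e$ already forces the codimension hypothesis $\dim M^G + 3 \le \dim M^e$, so both conditions of the main theorem are in fact satisfied.

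One small imprecision worth fixing: for part (2) the two isovariant lifts $g_1, g_2$ need not have the same underlying equivariant map; they are only equivariantly homotopic. So the comparison should not literally be a path between two points of the same based path space $P_{s_M, f^{\ast}s_N}\isovstr_G(X)$. Rather, one should use the fibration $\PDsfisov{G} \to \PDsf{G}$: the space of isovariant maps $s_M \to s_N$ maps to the space of underlying equivariant maps $X \to Y$, and over any path class the fibre is (after transporting $s_N$ back) the based path space $P_{s_M, (\cdot)^\ast s_N}\isovstr_G(X)$. Injectivity on $\pi_0$ of this map of mapping spaces follows once those fibres are connected, which is precisely $\pi_1(\isovstr_G(X)) = 0$. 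This is what $1$-connectedness delivers, so the conclusion stands; the formulation just needs to route through the fibration rather than through two elements of one path space. You also correctly flag the key ingredient outside the paper's own results, namely the Klang--Yeakel identification of the geometric isovariant mapping space with the mapping space in $\presheaf(\sL_G,\spc)$, together with the fact that the tubular-neighbourhood isovariant structure is sent to the corresponding presheaf; the paper leans on this exactly as you do.
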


Note that the classical Browder--Straus theorem has a slightly better range only assuming $2\dim M^G + 2 \leq \dim M^e$. Our approach of course applies to more general merely equivariant maps of isovariant $G$-Poincar\'e spaces, and losing a dimension when passing from manifolds to Poincar\'e spaces is not uncommon, see \cite[p. 2]{Klein2002b}.

\subsection*{Application: Isovariance structures in the Nielsen realisation problem}
\setcounter{thm}{4}

One of the main motivations for this article is the Nielsen realisation problem. We say that a \textit{homotopical $G$-action} on a manifold is a map of $E_1$-groups $G \rightarrow \mathrm{hAut}(M)$. The high-dimensonal Nielsen realisation problem for aspherical manifolds is about rigidifying such actions.

\begin{question}[The Nielsen realisation problem, Borel version]
    \label{quest:nielsen}
    If $G$ is a finite group and $M$ is a closed aspherical manifold with a homotopical $G$-action, when is there a $G$-action on $M$ by homeomorphisms giving rise to the $G$-homotopy type $\mathrm{Bor}(M)$?
\end{question}

Here the $G$-homotopy type $\mathrm{Bor}(M)$ is the obtained by putting $\mathrm{Bor}(M)^H = M^{hH}$, using the homotopical $G$-action. See \cite{kremer2025borelactionsnonpositivelycurved} for context and the relation to other formulations.
Recent strategies to answer \cref{quest:nielsen} have relied on constructing the structure of a $G$-isovariant Poincar\'e space on $\mathrm{Bor}(M)$ first, see \cite{lueck2022brown, davis2024nielsen}. Our result is the first such which shows the existence of isovariant Poincar\'e structures in the case where $\mathrm{Bor}(M)$ is not pseudofree, i.e., the fixed points are not discrete. 

\begin{cor}
    \label{cor:nielsen}
    In the situation of \cref{quest:nielsen}, assume that  $G$ is periodic and $\mathrm{Bor}(M)$ $G$ is a semifree $G$-Poincar\'e space. Then if $\dim M - \dim M^{hG} \geq 3$ and $2 \dim M^{hG} + 2 \leq \dim M$, $\mathrm{Bor}(M)$ admits the structure of a semifree isovariant $G$-Poincar\'e space.
\end{cor}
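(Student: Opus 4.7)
The plan is a direct application of \cref{thmintro:main_thm} to $X = \mathrm{Bor}(M)$ with $k = -1$. The text directly after \cref{thmintro:main_thm} stipulates that $(-1)$-connected is to be read as nonempty, so the conclusion of the theorem produces a point of $\isovstr_G(\mathrm{Bor}(M))$, which by definition is exactly a semifree isovariant $G$-Poincar\'e structure on $\mathrm{Bor}(M)$. Thus there is really no separate geometric input beyond translating the hypotheses.

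The translation goes as follows. First, $\mathrm{Bor}(M)$ is a semifree $G$-Poincar\'e space by assumption, and $G$ is periodic, so these two structural hypotheses of \cref{thmintro:main_thm} are immediate. Next, by the definition $\mathrm{Bor}(M)^H = M^{hH}$ recalled after \cref{quest:nielsen}, one has $\mathrm{Bor}(M)^e = M$ and $\mathrm{Bor}(M)^G = M^{hG}$. Consequently, the first dimensional hypothesis $\dim(X^G) + 3 \le \dim(X^e)$ of \cref{thmintro:main_thm} unfolds to $\dim M^{hG} + 3 \le \dim M$, which is the first assumption of the corollary, and the second hypothesis $k \le \dim(X^e) - 2\dim(X^G) - 3$ with $k = -1$ becomes $-1 \le \dim M - 2\dim M^{hG} - 3$, equivalently $2 \dim M^{hG} + 2 \le \dim M$, which is the second assumption.

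The only mildly subtle piece of book-keeping is that \cref{thmintro:main_thm} states its inequalities per component of $X^G$ together with the component of $X^e$ containing it, while the corollary uses the global dimension functions $\dim M$ and $\dim M^{hG}$; I would read the corollary's hypotheses pointwise on components of $M^{hG}$, and then the per-component conditions of \cref{thmintro:main_thm} hold by hypothesis. With that small clarification the proof is complete. In short, the main obstacle is not in this corollary at all but already absorbed into the proof of \cref{thmintro:main_thm}, whose $k = -1$ instance is precisely the existence statement we need.
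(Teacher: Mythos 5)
Your proof is correct and is exactly the argument the paper intends: apply \cref{thmintro:main_thm} (equivalently \cref{thm:main_thm}) to $X = \mathrm{Bor}(M)$ with $k = -1$, using $\mathrm{Bor}(M)^e = M$ and $\mathrm{Bor}(M)^G = M^{hG}$ to translate the two dimension hypotheses, and read $(-1)$-connected as nonempty. The per-component remark at the end is the right reading of the paper's notation, and the paper itself gives no separate proof because this translation is all there is.
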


In future work, we plan to use \cref{cor:nielsen} combined with the main result of \cite{HKK_Nielsen} to answer \cref{quest:nielsen} in a much broader class of examples than is currently known. The importance of \cref{cor:nielsen} is that using the decomposition provided isovariant structure on $\mathrm{Bor}(M)$, one is put in a good position to construct manifolds with boundaries for the pieces of the decomposition, and glue them together to build a manifold with a $G$-action.

\subsection*{Proof strategy and organisation of the article}

The proof strategy for \cref{thmintro:main_thm} consists of two steps.
One first observes that, given a $G$-Poincar\'e space $X$, the spherical fibration $p$ in \cref{diag:isovariation_pushout} always exists stably as the "stable equivariant normal bundle" of $X^G$ in $X$, and may be built from the dualising systems of $X^G$ and $X$.
The goal of the first step is to destabilise this stable normal bundle $\nu \colon X^G \to \Pic(\spectra_G)$ along the stabilisation map $\Sigma^\infty_J \colon \hrepfree{G} \to \Pic(\spectra_G)$ to an equivariant spherical fibration of the correct dimension.
Here, $\hrepfree{G}$ denotes the moduli space of tom Dieck's free generalised $G$-homotopy representations. To study it, we build a custom-made category of semifree $G$-spectra when $G$ is a periodic finite group, which we believe to be of some independent interest.
In the second step, we build the complement $C$ in \cref{diag:isovariation_pushout} by obstruction theory, by lifting the relative cells of the pair $(X, X^G)$ along $p$.
Naively, this only works up to half the dimension of $X$.
We employ Klein's nonequivariant existence result for Poincar\'e embeddings \cite[Theorem A]{Klein2002b} to actually lift all of those relative cells along $p$.

In the first part of this article \cref{sec:setup}, we recall the necessary background on Poincar\'e embeddings and equivariant Poincar\'e spaces needed in this article and introduce semifree isovariant $G$-Poincar\'e spaces in \cref{subsec:isovariant_semifree_G_PD}.
The destabilisation part of the proof strategy will be completed in \cref{sec:destabilisations}, and the obstruction theoretic part appears in \cref{sec:existence_isovariant_structures}.
The construction of the category of semifree $G$-spectra is deferred to \cref{sec:spectra_specified_isotropy}.

\subsection*{Notations and conventions}

We freely use the language and theory of $\infty$-categories as developed by Joyal, Lurie and many others. The term \textit{category} will refer to an $\infty$-category. We write $\spc$ for the (large) category of spaces, and $\spectra$ for the (large) category of spectra. If $G$ is a finite group, we write $\spc_G$ for the category of $G$-spaces, modelled as the category of $\spc$-valued presheaves on the orbit category of $G$, and we denote the category of genuine $G$-spectra by $\spectra_G$. We tried to make this article accessible without detailed knowledge of parametrised category theory, although it will appear in remarks that we deem helpful for the knowledgeable reader.

\subsection*{Acknowledgements}

We wholeheartedly thank Kaif Hillman for countless helpful conversations about equivariant Poincar\'e duality, and for introducing us to genuine equivariant homotopy theory in the first place. We also thank our advisor Wolfgang L\"uck for encouragement, discussions and good pointers to the literature when they were needed.
DK thanks the Ludwig-Maximilians-Universität München for their conducive working environments.
Both authors would like to thank the Max Planck Institute for Mathematics (MPIM) in Bonn for its hospitality.
\section{The setup}\label{sec:setup}

\counterwithin{thm}{subsection}

We begin by recalling some notions and constructions on Poincar\'e pairs and embeddings as well as equivariant Poincar\'e spaces that we use throughout the article in \cref{sec:setup_pd_pairs,sec:setup_g_pd}
In \cref{subsec:isovariant_semifree_G_PD} we introduce semifree $G$-Poincar\'e spaces.

\subsection{Poincar\'e pairs and embeddings} \label{sec:setup_pd_pairs}

According to a deep insight by Klein \cite{Klein_2001}, a compact space $X \in \spc^\omega$ comes with a \textit{dualising system} $D_X \in \spectra^X$ of spectra, uniquely characterised by the equivalence
\begin{equation*}
    X_* \simeq X_!(- \otimes D_X)
\end{equation*}
under the Morita-theoretic classification
\begin{equation}\label{eq:morita_classification}
    \spectra^X \xrightarrow{\simeq} \func^L(\spectra^X, \spectra), \quad E \mapsto X_!(- \otimes E)
\end{equation}
of colimit preserving functors.
Here $X_!, X_* \colon \spectra^X \to \spectra$ denote the colimit and limit functors, the left and right adjoints to the restriction functor $X^* \colon \spectra \to \spectra^X$.
The compact space $X$ is called a \textit{Poincar\'e space} if $D_X$ is pointwise invertible.
In classical terms, $D_X$ is the fibrewise Thom spectrum of the Spivak normal fibration of $X$.

There are also relative versions of this notion:
For a map $i \colon \partial X \to X$ of compact spaces we call
\begin{equation}\label{eq:fiber_sequence_rel_dualising_spectrum}
    D_{(X, \partial X)} = \fib(D_X \to i_! D_{\partial X})
\end{equation}
the \textit{relative dualising spectrum} of the pair $(X, \partial X)$,
where the map $D_X \to i_! D_{\partial X}$ corresponds to the map $X_* \to X_* i_*i^* \simeq \partial X_* i^*$ induced by the adjunction unit $\id \to i_* i^*$ under \cref{eq:morita_classification}.
Here, $i_!, i_* \colon \spectra^{\partial X} \to \spectra^X$ denote the left and right Kan extension functors, which are left and right adjoint to the restriction functor $i^* \colon \spectra^X \to \spectra^{\partial X}$, respectively.
$(X, \partial X)$ is called a \textit{Poincar\'e pair} if $D_{(X, \partial X)}$ is pointwise invertible and the map
\begin{equation*}
    \Omega D_{\partial X} \xrightarrow{} \Omega i^* i_! D_{\partial X} \to i^* D_{(X, \partial X)}
\end{equation*}
induced by the adjunction unit $\id \to i^* i_!$ and the connecting map of the fibre sequence \cref{eq:fiber_sequence_rel_dualising_spectrum} is an equivalence.
We will also need the notion of a \textit{Poincar\'e triad} $(X; X_0, X_1; X_{01})$, which is a commutative square of spaces
\begin{equation*}
\begin{tikzcd}
    X_{01} \ar[r] \ar[d] 
    & X_0 \ar[d] 
    \\
    X_1 \ar[r]
    & X
\end{tikzcd}
\end{equation*}
such that $(X_0, X_{01})$, $(X_1, X_{01})$ and $(X, X_0 \amalg_{X_{01}} X_1)$ are Poincar\'e pairs.

Let us recall the following basic facts on Poincar\'e pairs that we  use throughout the article.
These results are well known in the classical formulation via fundamental classes.
A proof in the formulation through parametrised spectra can be found in \cite{BHKK_PDP}.
\begin{lem}\label{lem:basic_facts_pd_pairs}
    \begin{enumerate}
        \item (Pushouts) Consider a pushout square of compact spaces
        \begin{equation*}
        \begin{tikzcd}
            X_{01} \ar[r] \ar[d] \ar[dr, phantom, very near end, "\ulcorner"]
            & X_0 \ar[d, "i_0"] 
            \\
            X_1 \ar[r, "i_1"]
            & X.
        \end{tikzcd}
        \end{equation*}
        If $(X_0, X_{01})$ and $(X_1, X_{01})$ are Poincar\'e pairs, then $X$ is a Poincar\'e space and the map $D_{(X_0, X_{01})} \to D_{X_0} \to i_0^* (i_0)_! D_{X_0} \to i_0^* D_X$ is an equivalence.
        Conversely, if the map $X_1 \to X$ admits a retraction on fundamental groupoids, if $X$ and $(X_, X_{01})$ are Poincar\'e spaces and if the map $D_{(X_0, X_{01})} \to i_0^* D_X$ is an equivalence, then $(X_1, X_{01})$ is a Poincar\'e pair.
        \item (Fibrations) Consider a map $p \colon X \to Y$ of compact spaces such that all fibres of $p$ are compact.
        Then there is an equivalence $D_X \simeq D_p \otimes p^* D_Y$ for a parametrised spectrum $D_p \in \spectra^X$.
        It comes together with an identification $i_y^* D_p \simeq D_{p^{-1}(y)}$ for all $y \in Y$, where $i_y \colon p^{-1}(y) \to X$ denotes the inclusion of the fibre.
        \item (Spheres) If $p \colon X \to Y$ is a spherical fibration over a Poincar\'e space $Y$, then $(Y, X)$ is a Poincar\'e pair and one has $D_{(Y, X)} \simeq p^* D_Y \otimes (\Sigma^\infty_J p)^{-1}$, where $\Sigma^\infty_J$ denotes the fibrewise join stabilisation of $p$.
        \item (Relative fibrations)
        Consider a map $(p, \partial p) \colon (E, \partial E) \to B$ of spaces.
        Assume that $B$ is the total space of a Poincar\'e pair $(B, \partial B)$ and that all fibres $(F, \partial F)$ of $(p, \partial p)$ are compact.
        Then $(E; \partial E, E \times_B \partial B; \partial E \times_{B} \partial B)$ is a Poincar\'e triad if and only if all fibres $(F, \partial F)$ are Poincar\'e pairs.
    \end{enumerate}
\end{lem}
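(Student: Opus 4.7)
\smallskip

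All four statements are tied together through the Morita-theoretic characterisation $X_* \simeq X_!(-\otimes D_X)$ and base change for parametrised spectra over compact bases. The excerpt already points to complete proofs in \cite{BHKK_PDP}; my plan is to describe the overall shape of the argument for each item.

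For \textbf{(1)}, I would construct a candidate dualising system $D$ on $X$ by gluing $D_{(X_0, X_{01})}$ on $X_0$ and $D_{(X_1, X_{01})}$ on $X_1$ along the identifications $i^* D_{(X_j, X_{01})} \simeq \Omega D_{X_{01}}$ coming from the defining fibre sequence \cref{eq:fiber_sequence_rel_dualising_spectrum}. Using the descent of $X_!$ and $X_*$ along the pushout cover, the required equivalence $X_* \simeq X_!(-\otimes D)$ reduces to its restrictions to $X_0$, $X_1$ and $X_{01}$, where it holds by hypothesis; invertibility of $D$ is then pointwise and immediate. The converse is more delicate and uses the retraction on fundamental groupoids to extract the Poincar\'e pair structure of $(X_1, X_{01})$ out of the duality data on $X$ and on $(X_0, X_{01})$.

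For \textbf{(2)}, I would exploit that compactness of the fibres of $p$ produces a right adjoint $p_*$ satisfying Beck--Chevalley base change along fibre inclusions. Writing $X_* \simeq Y_* p_*$ and $X_! \simeq Y_! p_!$ and comparing the Morita characterisations for $X$ and $Y$, one obtains a unique $D_p \in \spectra^X$ with $D_X \simeq D_p \otimes p^* D_Y$; the pointwise identification $i_y^* D_p \simeq D_{p^{-1}(y)}$ then follows by base change. Item \textbf{(3)} is a direct consequence of (2) applied to a spherical fibration: the fibre computation identifies $D_p$ with $(\Sigma^\infty_J p)^{-1}$, and substituting into \cref{eq:fiber_sequence_rel_dualising_spectrum} together with a chase of the connecting map verifies the Poincar\'e pair condition. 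Item \textbf{(4)} combines (2) with the pushout criterion of (1): pulling back $(B, \partial B)$ along $(p, \partial p)$ produces the four pieces of a candidate triad, and the triad conditions reduce pointwise on fibres to the hypothesis that each $(F, \partial F)$ is a Poincar\'e pair.

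The main obstacle in executing this plan cleanly is coherently matching up the various boundary and connecting maps -- notably the identification in (3) of $D_{(Y, X)}$ as $p^* D_Y \otimes (\Sigma^\infty_J p)^{-1}$, and the gluing identifications underpinning (1). With a suitably developed framework for parametrised spectra, Kan extensions, and dualising systems in place, this bookkeeping becomes formal; that formal setup is precisely the serious input of \cite{BHKK_PDP}.
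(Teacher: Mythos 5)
The paper itself supplies no proof of this lemma: it lists the statements and cites \cite{BHKK_PDP} for arguments in the language of parametrised spectra, so there is no in-house proof to compare against. Your sketch does follow the direction the paper indicates (Morita classification, base change, descent for dualising systems), and the outlines for (1), (2) and (4) are plausible at the level of detail you give.

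There is, however, a genuine error in your claim linking (2) to (3). You write that ``the fibre computation identifies $D_p$ with $(\Sigma^\infty_J p)^{-1}$.'' It does not, and this is not mere bookkeeping. First, the two live over different bases: $D_p \in \spectra^X$ whereas $\Sigma^\infty_J p$ is the fibrewise join stabilisation, a local system over $Y$. Second, even after pulling back to compare fibres, they disagree by a suspension: if $p^{-1}(y) \simeq S^n$, then the fibre of $D_p$ is $D_{S^n} \simeq \sphere^{-n}$, while $\Sigma^\infty_J(S^n) = \Sigma^\infty(S^n \join S^0) \simeq \Sigma^\infty S^{n+1} \simeq \sphere^{n+1}$, so $(\Sigma^\infty_J p)^{-1}$ has fibre $\sphere^{-n-1}$. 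The correct route to (3) is to feed (2) into the defining fibre sequence $D_{(Y,X)} = \fib(D_Y \to p_! D_X)$, use the projection formula to write $p_! D_X \simeq p_!(D_p) \otimes D_Y$, and then observe that the fibre of the resulting map $\unit \to p_!(D_p)$ over $y$ (over $S^n$, this is a map $\sphere \to \sphere \vee \sphere^{-n}$ coming from the Gysin construction) is $\sphere^{-n-1}$. The extra desuspension, which is exactly the difference between the reduced suspension spectrum of $S^n_+$ and the join stabilisation, is what yields $(\Sigma^\infty_J p)^{-1}$ rather than $D_p$ in the final formula. As stated your intermediate identification would produce the wrong answer for $D_{(Y,X)}$, off by one shift.
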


Our proof requires the following existence result for Poincar\'e embeddings in the nonequivariant case from \cite[Theorem A]{Klein2002b}.
\begin{defn}\label{def:relative_poincare_embedding}
    Consider a map $(f, \partial f) \colon (L, \partial L) \to (X, \partial X)$ of Poincar\'e pairs.
    A \textit{Poincar\'e embedding structure} on $(f, \partial f)$ is a pushout of pairs
    \begin{equation}\label{diag:pushout_equivariant_pd_embedding}
    \begin{tikzcd}
        (\partial_0 C, \partial_{01} C) \ar[r] \ar[d, "{(\nu, \partial \nu)}"'] \ar[dr, phantom, very near end, "\ulcorner"]
        & (C, \partial_1 C) \ar[d]
        \\
        (L, \partial L) \ar[r, "{(f, \partial f)}"']
        & (X, \partial X)
    \end{tikzcd}
    \end{equation}
    such that $(C; \partial_0 C, \partial_1 C; \partial_{01} C)$ is a Poincar\'e triad and $(\nu, \partial \nu) \colon (\partial_0 C, \partial_{01} C) \to (L, \partial L)$ is a relative spherical fibration.
\end{defn}

\begin{thm}[Klein, {\cite[Theorem A]{Klein2002b}}]\label{thm:nonequivariant_existence_embeddings}
    Consider a map of Poincar\'e pairs $f \colon (L,\partial L) \rightarrow (X,\partial X)$, where $L$ and $\partial L$ are finite spaces.
    Suppose that we are given a Poincar\'e embedding structure on $\partial f \colon \partial L \to \partial X$ and that the following conditions are satisfied:
    \begin{enumerate}
        \item each component of the pair $(L, \partial L)$ has dimension at most $k$;
        \item each component of $(X, \partial X)$ has dimension at least $d$;
        \item the map $f \colon L \to X$ is $r$-connected;
        \item $k \le d-3$ and $r \ge 2k-d+2$.
    \end{enumerate}
    Then there exists a relative Poincar\'e embedding structure on $f$, restricting to the given one on the boundary $\partial f$.
\end{thm}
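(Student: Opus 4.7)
The plan is to argue by induction on a finite relative CW structure of $(L, \partial L)$ built by attaching cells of dimension at most $k$ on top of $\partial L$. The base case is furnished by the given Poincar\'e embedding structure on $\partial f$. For the inductive step, suppose a Poincar\'e embedding of the $(j-1)$-skeleton $L^{(j-1)} \subseteq L$ into $X$ has already been constructed, with complement a Poincar\'e triad $C^{(j-1)}$ and normal spherical fibration, and let $\phi \colon S^{j-1} \to L^{(j-1)}$ be the attaching map of a $j$-cell, with $j \le k$. The composite $f \circ \phi$ inherits a Poincar\'e embedding structure by restriction from the induction hypothesis, and the task is to extend this embedding across the $j$-disk, in a way compatible with the existing triad data.

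I would recast this extension as a lifting problem. Using \cref{lem:basic_facts_pd_pairs}(3) on fiberwise joins, extending across a $j$-cell corresponds to producing a fiberwise null-homotopy of a certain map from the total space of the expected $(d-j-1)$-sphere normal bundle of the disk into the old complement $C^{(j-1)}$. Equivalently, this is a lift of a map from $D^j$ through a highly connected map of parametrized spaces or spectra extracted from the dualising systems of $X$ and $L$. The obstructions live in relative homotopy groups of parametrized mapping spaces, which vanish once $2j \le d + r - 2$ by a Blakers--Massey-type argument combining the $r$-connectedness of $f$ with the $(d-j-1)$-connectivity of the normal sphere fibration. Since $j \le k$, the hypothesis $r \ge 2k - d + 2$ is precisely what makes this inequality hold at every inductive stage.

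The gap hypothesis $k \le d - 3$ plays two roles: it ensures that the fibres of all intervening spherical fibrations have dimension at least $2$, so that $\pi_1$-obstructions do not interfere with Blakers--Massey, and it keeps the various complements in the range where they are honest Poincar\'e triads. The main obstacle I anticipate is the bookkeeping required to propagate the Poincar\'e triad structure of \cref{def:relative_poincare_embedding} through cell attachment; at each stage one must use \cref{lem:basic_facts_pd_pairs}(1) and (4) to verify that the new complement, together with its decomposition of boundary pieces, again fits into a Poincar\'e triad of the required form, and that its relative dualising spectra agree with the ones predicted by the normal fibration. Once this is set up carefully, the numerical conditions of the theorem translate directly into vanishing of the obstructions at each stage, closing the induction.
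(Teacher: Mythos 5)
This theorem is not proved in the paper at all: it is quoted verbatim as an external input from Klein's article \cite{Klein2002b} (Theorem A there), labelled ``Klein'' and used as a black box throughout \cref{sec:existence_isovariant_structures}. So there is no ``paper's own proof'' to compare your sketch against; what you are attempting is a from-scratch re-derivation of Klein's result, which is outside the scope of the present article.

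As to the sketch itself, I would flag two concerns before accepting it as a proof outline of Klein's theorem. First, the dimension bookkeeping on the normal fibration is off: in a Poincar\'e embedding of a $k$-dimensional $L$ into a $d$-dimensional $X$, the normal spherical fibration $\partial_0 C \to L$ has fibres $S^{d-k-1}$ over \emph{every} point of $L$, including over the interior of a newly attached $j$-cell; you write ``the expected $(d-j-1)$-sphere normal bundle of the disk,'' which is the normal fibre one would see when embedding a bare $j$-disk, not the fibre dimension compatible with the ambient Poincar\'e embedding of the whole $k$-dimensional $L$. This is not a cosmetic slip: the entire point of Klein's theorem is that it improves on the naive ``embed up to the middle dimension'' range, and a bare cell-by-cell obstruction argument with Blakers--Massey in the complement cannot reach $r \ge 2k - d + 2$ without a genuine extra input (Klein's published proof goes through fiberwise $S$-duality and a compression/decompression theorem for Poincar\'e embeddings, not cell-by-cell obstruction theory in the complement). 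Second, and relatedly, the claim that the obstruction to extending across a $j$-cell vanishes when $2j \le d + r - 2$ is asserted but not derived; this is precisely where the hard work in Klein's argument is concentrated, and until that estimate is actually established the induction does not close. In short: your high-level shape (induct over the cells of $(L,\partial L)$, propagate the triad structure via \cref{lem:basic_facts_pd_pairs}) is reasonable, but the crucial connectivity estimate is unsubstantiated and the normal-fibre dimension is wrong, so the sketch does not amount to a proof; and in any case the paper deliberately outsources this result to \cite{Klein2002b} rather than proving it.
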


\subsection{Equivariant Poincar\'e duality}\label{sec:setup_g_pd}

Equivariant Poincar\'e duality is a notion developed by the authors in \cite{HKK_PD}, and further studied in \cite{HKK_Nielsen,BHKK_PDP}, to express the (co)homological behaviour of smooth closed $G$-manifolds.
For the readers convenience, we give a precise recollection of the main facts of that theory that are relevant to the rest of the article. The most important concept for us is the \textit{equivariant dualising system} of a compact $G$-space, which roughly collects all the dualising spectra of the various fixed points with their compatibilities and equivariance. For our purposes it suffices to know that for each compact $G$-space there is a local system of genuine $G$-spectra $\equivdualisingspectrum{X}{G} \colon X^{G} \rightarrow \spectra_G$, which enjoys the following two compatibilities with the nonequivariant dualising spectra of $X^G$ and $X^e$.
\begin{thm}\label{thm:basic_properties_equivariant_dualising_spectrum}
\begin{enumerate}
    \item There is a commutative square
    \begin{equation*}
    \begin{tikzcd}
        X^G \ar[d] \ar[r, "\equivdualisingspectrum{X}{G}"]
        & \spectra_G \ar[d, "\res"]
        \\
        X^e \ar[r, "D_{X^e}"]
        & \spectra.
    \end{tikzcd}
    \end{equation*}
    \item The composite
    \begin{equation*}
        X^G \xrightarrow{\equivdualisingspectrum{X}{G}} \spectra_G \xrightarrow{\Phi^G} \spectra
    \end{equation*}
    identifies with the nonequivariant dualising spectrum $D_{X^G}$.
\end{enumerate}
\end{thm}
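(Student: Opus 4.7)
The plan is to deduce both statements from the Morita-theoretic characterisation of the equivariant dualising system $\equivdualisingspectrum{X}{G}$, which is the $G$-equivariant analogue of the nonequivariant classification in \cref{eq:morita_classification}. Namely, the parametrised category $\spectra_G^X$ of parametrised genuine $G$-spectra on the compact $G$-space $X$ admits a Morita description of colimit-preserving functors to $\spectra_G$, under which the equivariant limit functor $X_*$ corresponds to a lower-shriek-and-tensor construction with a unique parametrised $G$-spectrum. Viewed as a local system on $X^G$ via the equivariant parametrisation, this spectrum is $\equivdualisingspectrum{X}{G}$. Both parts of the theorem reduce to compatibility of this characterisation with the functors $\res$ and $\Phi^G$, respectively.

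For part (1), I would argue that postcomposition with $\res \colon \spectra_G \to \spectra$ at each fibre identifies the equivariant parametrised category $\spectra_G^X$ with the nonequivariant parametrised category $\spectra^{X^e}$ over the underlying space, and that this identification intertwines the equivariant shrieks along $X \to \ast$ with the nonequivariant shrieks along $X^e \to \ast$. The inclusion $X^G \hookrightarrow X^e$ enters because $\equivdualisingspectrum{X}{G}$ is defined as a system on $X^G$, while the corresponding nonequivariant dualising system lives on $X^e$. Hence $\res \circ \equivdualisingspectrum{X}{G}$ is a Morita representative of the limit functor on $\spectra^{X^e}$ restricted along $X^G \to X^e$; by uniqueness, it must coincide with $D_{X^e}$ restricted along $X^G \to X^e$, which is the content of the commutative square.

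For part (2), the geometric fixed points functor $\Phi^G \colon \spectra_G \to \spectra$ is symmetric monoidal, and its fibrewise extension to parametrised objects converts $\spectra_G^X$ into the nonequivariant parametrised category $\spectra^{X^G}$ over the fixed-point subspace, intertwining the equivariant shrieks along $X$ with the nonequivariant shrieks along $X^G$. Hence $\Phi^G \circ \equivdualisingspectrum{X}{G}$ is a Morita representative of the limit functor on $\spectra^{X^G}$, and by uniqueness of the Morita representative, equals $D_{X^G}$.

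The main obstacle is to set up the parametrised compatibility of $\res$ and $\Phi^G$ with the shriek functors in sufficiently categorical language that the Morita uniqueness statement applies cleanly. This is essentially a parametrised enhancement of the facts that $\res$ and $\Phi^G$ are symmetric monoidal, that $\res$ forgets $G$-structure while retaining the full underlying space, and that $\Phi^G$ collapses all free $G$-cells while picking out the fixed subspace; the details are carried out in \cite{HKK_PD}.
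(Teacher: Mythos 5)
The paper does not prove this statement; it merely cites \cite[Proposition 4.3.1]{HKK_PD} and \cite[Theorem 4.2.9]{HKK_PD}, so there is no internal argument to compare against. Your sketch is in the right spirit --- compatibility of the Morita characterisation with $\res$ and $\Phi^G$ is exactly the mechanism one expects --- but two of its central claims are not correct as stated, and they are load-bearing for the way you have set up the uniqueness argument.

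First, postcomposition with $\res \colon \spectra_G \to \spectra$ does \emph{not} identify $\spectra_G^{X}$ with $\spectra^{X^e}$. The left-hand side retains genuine equivariant information that the right-hand side discards; already for $X = \ast$ one would be claiming $\spectra_G \simeq \spectra$. Likewise, fibrewise $\Phi^G$ does not convert $\spectra_G^X$ into $\spectra^{X^G}$: it is a quotient functor (killing everything induced from proper subgroups), not an equivalence. What is true, and what the argument actually needs, is that $\res$ and $\Phi^G$ are symmetric monoidal, preserve colimits (and in the case of $\res$ also limits), and intertwine the relevant shriek and star functors along $X \to \ast$ on the equivariant side with those along $X^e \to \ast$ (respectively $X^G \to \ast$) on the nonequivariant side. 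That compatibility, together with the nonequivariant Morita uniqueness \cref{eq:morita_classification} applied on the target side, is what forces $\res \circ \equivdualisingspectrum{X}{G}$ to agree with $D_{X^e}|_{X^G}$ and $\Phi^G \circ \equivdualisingspectrum{X}{G}$ to agree with $D_{X^G}$. As written, your argument appeals to uniqueness through a purported equivalence of parametrised categories that does not exist; to close the gap you would need to identify, without such an equivalence, which colimit-preserving functor $\spectra^{X^G} \to \spectra$ the local system $\res \circ \equivdualisingspectrum{X}{G}$ (resp.\ $\Phi^G \circ \equivdualisingspectrum{X}{G}$) actually represents, and then match that with the one represented by $D_{X^e}|_{X^G}$ (resp.\ $D_{X^G}$). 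This is precisely the content established in \cite{HKK_PD} using the parametrised machinery, and it is where the real work lives.
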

\begin{proof}
    This is  \cite[Proposition 4.3.1]{HKK_PD} and \cite[Theorem 4.2.9]{HKK_PD}.
\end{proof}
$X$ is called a \textit{$G$-Poincar\'e space} if this equivariant dualising spectrum (and also the dualising spectra $\equivdualisingspectrum{X}{H} \colon X^H \to \spectra_H$ for all intermediate sugroups $H \le G$) is invertible. Examples are closed smooth manifolds with a smooth action of the group $G$ \cite[Prop. 4.4.2.]{HKK_PD}.

We are mainly interested in \textit{semifree $G$-spaces}.
This means that the map $X^G \to X^H$ is an equivalence for all subgroups $e \neq H \le G$, or equivalently, that $X$ has the homotopy type of a $G$-CW complex with cells of isotropy type $G/G$ and $G/e$ - each point either lies in a free orbit or is fixed by the group action. A \textit{semifree $G$-Poincar\'e space} is a semifree compact $G$-space, which is also a $G$-Poincar\'e space. The \textit{moduli space of semifree $G$-Poincar\'e spaces} will be denoted by $\PDsf{G}$, the full subgroupoid of $\spc_G^{\simeq}$ on all semifree $G$-Poincar\'e spaces.
The cruicial property of the equivariant dualising spectrum for semifree $G$-spaces that we use in this article is the following:
\begin{thm}\label{lem:dualising_singular_part_trivial}
    Let $X$ be a semifree compact $G$-space. Then the following two composites
    \begin{equation*}
        X^G \xrightarrow{\equivdualisingspectrum{X}{G}} \spectra_G \to \spectra_G/e
    \end{equation*}
    and 
    \begin{equation*}
        X^G \xrightarrow{D_{X^G}} \spectra \xrightarrow{\infl} \spectra_G \xrightarrow{} \spectra_G/e
    \end{equation*}
    are equivalent.
\end{thm}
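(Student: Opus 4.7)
The plan is to first characterise the quotient $\spectra_G/e$ so that equivalences are detected by geometric fixed points at nontrivial subgroups, then construct a natural comparison between the two functors $X^G \to \spectra_G$, and verify it becomes an equivalence after localising. The category $\spectra_G/e$ is the Verdier quotient of $\spectra_G$ by the localising subcategory generated by the free orbit spectrum $\Sigma^\infty_+(G/e)$, equivalently the smashing Bousfield localisation at the family $\family$ of nontrivial subgroups of $G$; under this description, a morphism in $\spectra_G$ descends to an equivalence in $\spectra_G/e$ if and only if its geometric fixed points $\Phi^H$ are equivalences for every nontrivial $H \le G$. This is the condition I will check.

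For the comparison, combining \cref{thm:basic_properties_equivariant_dualising_spectrum}(2) with the structural interplay between inflation and geometric fixed points in the category of genuine $G$-spectra, I produce a natural transformation
\begin{equation*}
    \eta \colon \equivdualisingspectrum{X}{G}|_{X^G} \longrightarrow \infl \circ D_{X^G}
\end{equation*}
of functors $X^G \to \spectra_G$, using the identification $\Phi^G \circ \equivdualisingspectrum{X}{G} \simeq D_{X^G}$ from \cref{thm:basic_properties_equivariant_dualising_spectrum}(2) to identify the target. Pointwise, the cofibre of $\eta_x$ will be a free $G$-spectrum whose underlying nonequivariant spectrum measures the difference between $D_{X^e}(x)$ and $D_{X^G}(x)$ exhibited by \cref{thm:basic_properties_equivariant_dualising_spectrum}(1)--(2).

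To check $\eta$ localises to an equivalence in $\spectra_G/e$, I verify $\Phi^H \eta$ is a pointwise equivalence for every nontrivial $H \le G$. For $H = G$, $\Phi^G \eta_x$ is, by construction together with \cref{thm:basic_properties_equivariant_dualising_spectrum}(2) and the standard identification $\Phi^G \circ \infl \simeq \id$, the identity on $D_{X^G}(x)$. For a proper nontrivial $e \ne H \subsetneq G$, semifreeness gives $X^H = X^G$ as $G$-spaces, and an analogue of \cref{thm:basic_properties_equivariant_dualising_spectrum}(2) applied at the subgroup $H$ identifies $\Phi^H \equivdualisingspectrum{X}{G}(x)$ with $D_{X^H}(x) = D_{X^G}(x)$, which matches $\Phi^H \infl D_{X^G}(x) = D_{X^G}(x)$; tracing through the construction shows $\Phi^H \eta_x$ is the identity.

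The main obstacle is the extension of \cref{thm:basic_properties_equivariant_dualising_spectrum}(2) from $G$ to intermediate proper subgroups $e \ne H \subsetneq G$, describing $\Phi^H$ of the equivariant dualising system in terms of nonequivariant dualising systems of $X^H$; this compatibility is needed to handle the geometric fixed points of the first functor at $H$ and is not explicitly in the quoted statement, though it should follow from the general structure of the equivariant dualising system set up in \cite{HKK_PD}. A secondary point is to construct the structural map $E \to \infl \Phi^G E$ used in building $\eta$ with the required naturality in the genuine setting. Once these technical ingredients are in place, the semifreeness hypothesis $X^H = X^G$ reduces everything to the action-free identification of the dualising system with its nonequivariant counterpart inflated, and the verification concludes routinely.
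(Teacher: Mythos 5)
The paper proves this statement by a direct citation: it is a special case of \cite[Thm.~4.2.7]{HKK_PD} applied to the trivial family $\family = \{e\}$, together with the observation that for a semifree $G$-space the singular part $X^{>1}$ coincides with $X^G$. Your proposal instead attempts to re-derive the statement from scratch, and there is a genuine gap in the construction of the comparison map.

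The crux is the natural transformation $\eta \colon \equivdualisingspectrum{X}{G}|_{X^G} \to \infl \circ D_{X^G}$, which you propose to build using a ``structural map $E \to \infl \Phi^G E$'' and flag only as a secondary point. No such natural transformation exists in $\spectra_G$. The geometric fixed points functor $\Phi^G \colon \spectra_G \to \spectra$ does have a right adjoint, but that right adjoint is $S \mapsto \widetilde{E\proper} \otimes \infl(S)$, not $\infl$; the corresponding unit is the map $E \to \widetilde{E\proper} \otimes \infl \Phi^G E$. Replacing your intended target with this one does not help: for any proper nontrivial $H$ one has $\Phi^H(\widetilde{E\proper} \otimes \infl \Phi^G E) \simeq 0$, so the would-be $\eta$ cannot become an equivalence in $\spectra_G/e$ when $G$ has nontrivial proper subgroups. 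Without an actual morphism in $\spectra_G$ (or at least in $\spectra_G/e$), merely checking that $\Phi^H$ of source and target agree for each $H \ne e$ does not produce an equivalence between the two composite functors --- two objects of $\spectra_G/e$ with abstractly equivalent geometric fixed points need not be equivalent, since $\spectra_G/e$ still records gluing data between the strata at nontrivial subgroups. This is exactly the nontrivial content of \cite[Thm.~4.2.7]{HKK_PD}, and your step (3) also relies on knowing that the constructed $\eta$, not just an abstract identification, induces the identity on each $\Phi^H$, which you do not establish.

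Your issue (a) --- the extension of \cref{thm:basic_properties_equivariant_dualising_spectrum}(2) from $G$ to intermediate subgroups, i.e.\ $\Phi^H \equivdualisingspectrum{X}{G}(x) \simeq D_{X^H}(x)$ --- is also essential rather than a footnote, but it is at least plausibly available from \cite{HKK_PD}. The more serious problem is the comparison map. If you want an honest from-scratch argument, you would need to construct the comparison morphism directly from the definition of the equivariant dualising system (for example via its formula over the singular part) rather than from a nonexistent natural transformation $\id \to \infl\Phi^G$. As it stands, the proposal does not close.
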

\begin{proof}
    This is a special case of \cite[Thm. 4.2.7.]{HKK_PD} for the trivial family $\family = \{ e \}$, using that the singular part $X^{>1}$ agrees with $X^G$ in the semifree case.
\end{proof}

\subsection{Semifree isovariant $G$-Poincar\'e spaces}
\label{subsec:isovariant_semifree_G_PD}

We have introduced semifree $G$-Poincar\'e spaces in the last section. Note that if a smooth $G$-action on a closed smooth $G$-manifold $M$ is semifree in the sense that each isotropy group is either trivial or all of $G$, then the underlying $G$-homotopy type of $M$ is a semifree $G$-Poincar\'e space. However, in this geometric setting we observe that the underlying $G$-homotopy type of $M$ actually comes with a refined structure in the shape of a decomposition.

The fixed points $M^G$ are a smooth $G$-submanifold of $M$.
The normal bundle $\nu$ inherits a $G$-action whose fibre over a fixed point in $M^G$ is a free $G$-representation.
We can recover $M$ up to $G$-homotopy equivalence by the pushout
\begin{equation*}
\begin{tikzcd}
    S(\nu) \ar[d] \ar[r] \ar[dr, phantom, very near end, "\ulcorner"]
    & M \setminus D(\nu) \ar[d]
    \\
    M^G \ar[r]
    & M,
\end{tikzcd}
\end{equation*}
where $D(\nu) \subseteq M$ denotes an equivariant tubular neighbourhood of $M^G$, the disk bundle of $\nu$, and $S(\nu)$ is its boundary.
The pair $(M \setminus D(\nu), S(\nu))$ is a free $G$-manifold with boundary and the projection $S(\nu) \to M^G$ is an equivariant fibre bundle with fibres given by free $G$-spheres. Next, we aim at capturing this decomposition in a homotopical fashion, which leads to the concept of a semifree isovariant $G$-Poincar\'e space.

To give the homotopical analogue of the sphere normal bundle of the fixed point set, for example, we have to replace the unit spheres in the normal representation $\nu$ by a homotopical analogue, which is provided by tom Dieck's generalised homotopy representations.
\begin{defn}\label{def:generalised_hrep}
    A \textit{generalised $G$-homotopy representation} is a compact $G$-space $V \in \spc_G^\omega$ such that for all subgroups $H \le G$ there is a number $n(H) \ge -1$ and an equivalence $V^H \simeq S^{n(H)}$.
\end{defn}

In later sections, we study generalised $G$-homotopy representations and their relation to invertible $G$-spectra via a suitable process of stabilisation. Now $G$-homotopy representations are used to define the notion of an equivariant spherical fibration.

\begin{defn}
    \label{def:equiv_spherical_fibration}
    An \textit{equivariant spherical fibration} is a map $p \colon X \to Y$ of $G$-spaces such that for each subgroup $H \le G$ and each point $y \in Y^H$ the fibre $p^{-1}(y) \in \spc_H$ is a generalised $H$-homotopy represenation. 
    A \textit{relative equivariant spherical fibration} is a map $(p, p') \colon (X, X') \to (Y, Y')$ of pairs of $G$-spaces such that $p$ is an equivariant spherical fibration and $X' \to X \times_{Y} Y'$ is an equivalence.
\end{defn}

\begin{rmk}
    In terms of parametrised homotopy theory, if a map $p \colon X \rightarrow Y \in \spc_G$ is classified by a functor $t_p \colon Y \to \udl{\spc}^{\simeq}$ to the moduli $G$-space of $G$-spaces, it is an equivariant spherical fibration if $t_p$ lands in the sub $G$-space of generalised $G$-homotopy representations.
\end{rmk}

We have introduced the necessary terminology to introduce the main concept of this article, the concept of an isovariant structure on a semifree $G$-Poincar\'e space $X \in \PDsf{G}$, mirroring the decomposition of a semifree smooth closed $G$-manifold constructed above. 

\begin{defn}\label{def:isovariant_structure}
    For $X \in \PDsf{G}$, an \textit{isovariant structure on $X$} is a pushout
    \begin{equation}\label{diag:isovariant_structure}
        \begin{tikzcd}
            \partial C \ar[r] \ar[d, "p"] & C \ar[d] \\
            X^{G} \ar[r] & X
        \end{tikzcd}
    \end{equation}
    satisfying the following conditions:
    \begin{enumerate}
        \item $p \colon \partial C \to X^G$ is an equivariant spherical fibration;
        \item $C$ and $\partial C$ are free $G$-spaces;
        \item $(C^e, \partial C^e)$ is a Poincar\'e pair.
    \end{enumerate}    
    Denote by $\PDsfisov{G} \subseteq \func([1]^2, \spc_G)^\simeq$ the full subgroupoid consisting of pushout squares which have a semifree $G$-Poincar\'e space as bottom right corner and provide an isovariant structure on it.

    We will also need the following relative version.
    Consider a $G$-Poincar\'e pair $(X, \partial X)$ and assume that $X$ and $\partial X$ are both semifree.
    Then an isovariant structure on $(X, \partial X)$ is a pushout of pairs of $G$-spaces
    \begin{equation}\label{diag:isovariant_pair}
    \begin{tikzcd}
        (\partial_0 C, \partial_{01} C) \ar[r] \ar[d, "{(p, \partial p)}"'] \ar[dr, phantom, very near end, "\ulcorner"]
        & (C, \partial_1 C) \ar[d]
        \\
        (X^{C_p}, \partial X^{C_p}) \ar[r]
        & (X, \partial X)
    \end{tikzcd}
    \end{equation}
    satisfying the following conditions:
    \begin{enumerate}
        \item $(p, \partial p) \colon (\partial_0 C, \partial_{01} C) \to (X^{C_p}, \partial X^{C_p})$ is a relative equivariant spherical fibration;
        \item $C$, $\partial_0 C$, $\partial_1 C$ and $\partial_{01} C$ are free $G$-spaces;
        \item $(C^e; \partial_0 C^e, \partial_1 C^e; \partial_{01} C^e)$ is a Poincar\'e triad.
    \end{enumerate}
    We can define the space of semifree isovariant $G$-Poincar\'e pairs as the full subspace $\PDsfisovpair{G} \subseteq \func([1]^3, \spc_G)^\simeq$ of those cubes whose front and back face are pushouts and which provide a semifree structure on the bottom right leg.
\end{defn}

For those familiar with the notion of equivariant Poincar\'e spaces, an isovariant structure on the semifree $G$-Poincar\'e space $X$ is the same as an equivariant Poincar\'e embedding of the fixed points $X^G \to X$.

\begin{rmk}
    The reader might wonder about the ad-hoc nature of the above definition, and why we do not require that other subdiagrams are Poincar\'e triads, for example. Aside from being natural from a geometric viewpoint, the framework of Poincar\'e duality for category pairs from \cite{BHKK_PDP} gives us a way to check that have put the ``right" conditions. For this remark, we will freely use the language of that article. Note that the cube in \cref{def:isovariant_structure} is determined by the subdiagram
    \begin{equation*}
        \begin{tikzcd}
            \partial_0 C \ar[d] & \ar[l] \partial_{01} C \ar[r] \ar[d] &  \partial_1 C \ar[d] \\
             X^{C_p} & \ar[r] \partial X^{C_p} \ar[l] & C
        \end{tikzcd}
    \end{equation*}
    and we may take the total $G$-category of the unstraightening of that diagram relative to the subcategory determined by the upper span $\partial_0 C\leftarrow \partial_{01} C \rightarrow \partial_1 C$  to get a $G$-category pair $(\mathcal{X}, \partial \mathcal{X})$. According to the local-to-global-principle, we see that this pair is a $G$-Poincar\'e duality pair if and only if both the left square and the right square are $G$-Poincar\'e triads. For the right square, our condition predicts that the left square is a Poincar\'e triad, which is equivalent to it being a $G$-Poincar\'e triad since the action is free. For the left square it follows from $(p,\partial p)$ being a relative equivariant spherical fibration.
\end{rmk}

\begin{rmk}\label{rem:equivalent_conditions_def_isovariant_structure}
    Under codimension assumptions, condition (3) in \cref{def:isov_structure} is sometimes easier to check.
    If for each component of $X^G$ and the corresponding component of $X^e$ containing it one has $\dim(X^e) - \dim(X^G) \ge 3$, then condition (3) is equivalent to asking whether the composite $D_{(X^G, \partial C)} \to D_{X^G} \to i^* i_! D_{X_G} \to i^* D_X$ is an equivalence: The map $\partial C \to X^G$ is 2-connected and the claim then follows from Wall's subtraction result \cite[Theorem 2.1 (ii)]{WallPD}. Similar remarks hold in the relative version.
\end{rmk}

\begin{rmk}[Isovariant homotopy theory]
    \label{rmk:yeakel}
    Recently, Yeakel and Klang--Yeakel \cite{Yaekel, Klang_Yeakel} developed the homotopy-theoretic foundations of isovariant homotopy theory, and our results can actually be interpreted in their framework. We denote by $\spc_{G, \isov}^\semifree \subseteq \func(\Lambda^2_0, \spc_G)$ the full subcategory of \textit{semifree isovariant space} consisting of spans $X^{f} \leftarrow \partial C \rightarrow C$, where the $G$-action on $X^f$ is assumed to be trivial, while requiring the action on $C$ and $\partial C$ to be free. 
    In particular, semifree isovariant Poincar\'e spaces in the sense of \cref{def:isovariant_structure} form a subgroupoid $\PDsfisov{G} \subseteq \spc_{G, \isov}^{\semifree,\simeq}$.
    Note note that the objects $* \leftarrow G/e \rightarrow G/e$, $* \leftarrow \emptyset \rightarrow \emptyset$ and $\emptyset \leftarrow \emptyset \rightarrow G/e$ generate the category $\spc_{G, \isov}^\semifree$ under colimits, and that mapping out of each of them individually commutes with colimits. Hence, the category of isovariant spaces is equivalent to the category of presheaves on the subcategory spanned by these three objects. This subcategory is equivalent to the subcategory $\sL_G^\semifree \subseteq \sL_G$ of Yeakel's \textit{link orbit category} \cite[Def. 1.1]{Yaekel} spanned by the chains of subgroups $e < G$, $G$ and $e$. Yeakel's homotopy theory of isovariant spaces is (equivalent to) the category of presheaves $\presheaf(\sL_G; \spc)$ on the aforementioned link orbit category, and the discussion above exhibits $\spc_{G, \isov}^\semifree$ as a full subcategory of it. 
    The article \cite{Klang_Yeakel} establishes that the space of isovariant maps between smooth $G$-manifolds, as a subspace of the space of all maps with the compact-open topology, is equivalent to the mapping space in $\presheaf(\sL_G, \spc)$.
\end{rmk}

Extracting the (lower right) pushout corner in \cref{diag:isovariant_structure} provides a map $\PDsfisov{G} \rightarrow \PDsf{G}$.
Given a semifree $G$-Poincar\'e space $X$, we further write $\isovstr(X)$ for the fibre $\isovstr(X) = \PDsfisov{G} \times_{\PDsf{G}} \{X\}$ and similarly for a semifree $G$-Poincar\'e pair $\isovstr(X, \partial X) = \PDsfisovpair{G} \times_{\PDsfpair{G}} \{(X, \partial X)\}$.
Note that the space $\isovstr(X,\partial X)$ is nonempty if and only if $(X,\partial X)$ admits an isovariant structure.

\begin{lem}\label{lem:triviality_of_isovariant_structure}
    Let $X \in \PDsf{G}$.
    Assume that each component of $X^G$ has codimension at least $3$ in the corresponding component of $X^{e}$.
    Then for any (nonequivariant) Poincar\'e pair $(Y, \partial Y)$ there is an equivalence $\map(Y, \isovstr(X)) \simeq \isovstr(X \times (Y, \partial Y))$, natural in arbitrary maps of Poincar\'e pairs $(Y,\partial Y) \to (Z, \partial Z)$.
\end{lem}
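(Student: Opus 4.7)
The plan is to construct a natural map
\[
\Phi \colon \map(Y, \isovstr(X)) \to \isovstr(X \times (Y, \partial Y))
\]
by a ``tensoring with $(Y,\partial Y)$'' construction, and to show it is an equivalence by exhibiting a fibrewise inverse.

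A map $f \colon Y \to \isovstr(X)$, thought of as a functor from the $\infty$-groupoid $Y$ to a subgroupoid of $\func([1]^2, \spc_G)^\simeq$, is the same data as a $Y$-indexed pushout square in $\spc_G$ with bottom-right corner the constant functor at $X$. Applying $\colim_Y \colon \func(Y, \spc_G) \to \spc_G$ (which preserves pushouts and sends the constant functor at $Z$ to $Z \times Y$), and similarly $\colim_{\partial Y}$ along $\partial Y \hookrightarrow Y$, produces a pushout of pairs with bottom-right $(X \times Y, X \times \partial Y)$. I then check this is an isovariant structure. The top horizontal map is an equivariant spherical fibration: its fibre over $(x,y) \in X^G \times Y$ is the fibre over $x$ of $p_y \colon \partial C_y \to X^G$, a generalised $G$-homotopy representation by assumption on $f(y)$. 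The free $G$-action condition survives colimits over the trivially acted-upon base $Y$. Finally, the Poincar\'e triad condition on $(C^e; \partial_0 C^e, \partial_1 C^e; \partial_{01} C^e)$ follows from \cref{lem:basic_facts_pd_pairs}(4) applied to the projection $C^e \to Y$, whose fibres $(C_y^e, \partial C_y^e)$ are Poincar\'e pairs by assumption and whose base $(Y, \partial Y)$ is a Poincar\'e pair.

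Conversely, given an isovariant structure on $X \times (Y, \partial Y)$, every corner projects naturally to $Y$, and pulling back along $\{y\} \hookrightarrow Y$ yields, for each $y \in Y$, a pushout square of $G$-spaces with bottom-right $X$. The equivariant spherical fibration and free $G$-action conditions pass to this fibrewise restriction, and the Poincar\'e pair condition on $(C_y^e, \partial C_y^e)$ follows from the converse direction of \cref{lem:basic_facts_pd_pairs}(4). Assembling this data functorially in $y$ yields the inverse map $\Psi \colon \isovstr(X \times (Y, \partial Y)) \to \map(Y, \isovstr(X))$.

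That $\Phi$ and $\Psi$ are mutually inverse reduces to the formal statement that, in the presentable $\infty$-topos $\spc_G$, the operations ``colimit over $Y$'' and ``fibre over $y$'' invert each other on product-type families with trivial $Y$-action; naturality in $(Y, \partial Y) \to (Z, \partial Z)$ is then automatic by functoriality of both constructions. The main technical hurdle is the pair-and-triad bookkeeping in the two applications of \cref{lem:basic_facts_pd_pairs}(4), and the codimension hypothesis offers the safety net of \cref{rem:equivalent_conditions_def_isovariant_structure} if condition (3) is easier to verify on the level of dualising spectra than directly as a Poincar\'e pair.
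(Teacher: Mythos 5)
Your forward construction $\Phi$ (colimit over $Y$, then pull back to $\partial Y$) is essentially the same as the paper's unstraightening argument, and your application of \cref{lem:basic_facts_pd_pairs}(4) to $C^e \to Y$ for the forward direction is fine. The gap is in the inverse. You assert that $\Phi\circ\Psi=\id$ "reduces to the formal statement that `colimit over $Y$' and `fibre over $y$' invert each other." This is not a formal statement, and it is precisely where the codimension hypothesis must enter. A relative isovariant structure on $X\times(Y,\partial Y)$ is a cube whose front and back faces are pushouts; your $\Phi$ produces only cubes whose back face is the \emph{pullback} of the front face along $X\times\partial Y\to X\times Y$, equivalently whose left and right faces are pullback squares. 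An arbitrary isovariant cube has no a priori reason to satisfy this: the left face is a pullback by the definition of a relative equivariant spherical fibration, but there is nothing in \cref{def:isovariant_structure} forcing the right face
\[
\begin{tikzcd}
\partial_1 C \ar[r]\ar[d] & C\ar[d]\\ X\times\partial Y\ar[r] & X\times Y
\end{tikzcd}
\]
to be a pullback. The paper proves exactly this — front and back pushouts, left pullback, free top, plus $\partial_{01}C\to X^G\times\partial Y$ \emph{$2$-connected} implies right pullback — and the $2$-connectivity is where codimension $\geq 3$ is used. Without this, $\Phi$ need not hit all of $\isovstr(X\times(Y,\partial Y))$, and your $\Psi$ (extract fibres of the front face) throws away the data of how $\partial_1 C$ sits relative to $C\times_Y\partial Y$, so the round trip $\Phi\circ\Psi$ replaces the right face by its pullback closure and need not recover the original cube.

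The same issue invalidates your application of the converse direction of \cref{lem:basic_facts_pd_pairs}(4): that lemma speaks about the triad $(C^e;\partial_0 C^e, C^e\times_Y\partial Y;\partial_0 C^e\times_Y\partial Y)$, but what you are handed is that $(C^e;\partial_0 C^e,\partial_1 C^e;\partial_{01}C^e)$ is a Poincar\'e triad. To pass between them you again need $\partial_1 C^e\simeq C^e\times_Y\partial Y$. Note also that you never genuinely invoke the codimension $\geq 3$ assumption — it appears only as a throwaway "safety net" — which should have been a warning sign, since the statement is really about this hypothesis. The paper handles condition (3) in the backward direction differently, via the subtraction result in \cref{lem:basic_facts_pd_pairs}(1) plus a compactness argument for $C^e_y$, rather than \cref{lem:basic_facts_pd_pairs}(4); but the more serious omission in your proof is the missing pullback verification for the right face.
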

As a consequence of this result, observe that, under the codimension $3$ assumption, elements in $\pi_n(\isovstr(X))$ correspond to isovariant structures on $X \times S^n$.
Similarly, a map $S^n \to \isovstr(X)$ is nullhomotopic if and only if the associated isovariant structure on $X \times S^n$ extends to a relative isovariant structure on $X \times (D^{n+1}, S^n)$.
\begin{proof}[Proof of \cref{lem:triviality_of_isovariant_structure}.]
    Consider the spaces
    \begin{align*}
        \A(X) & = \func([1]^2, \spc_G)^\simeq \times_{\func([1] \times 1, \spc_G)^\simeq} \{X^G \to X\};
        \\
        \B(X, \partial X) & = \func([1]^3, \spc_G)^\simeq \times_{\func([1]^2 \times 1, \spc_G)^\simeq} \{(X^G, \partial X^G) \to (X, \partial X) \}
    \end{align*}
    of squares and cubes with prescribed bottom face.
    Straigthening-unstraightening gives an equivalence $\map(Y, \A(X)) \simeq \A(X \times Y)$.
    There is a map $\A(X \times Y) \to \B(X \times (Y, \partial Y))$ by pulling a square with bottom right corner $X \times Y$ back along the map $X \times \partial Y \to X \times Y$.
    We have to show that the composite $\map(Y, \A(X)) \simeq \A(X \times Y) \to \B(X \times (Y, \partial Y))$, which is clearly natural in $(Y, \partial Y) \in \func([1], \spc)$, restricts to the claimed equivalence $\map(Y, \isovstr(X)) \simeq \isovstr(X \times (Y, \partial Y))$.

    First, note that given a commutative cube
    \begin{equation}\label{diag:isovariant_cube}
    \begin{tikzcd}
        & \partial_{01} C \ar[rr] \ar[dd] \ar[dl]
        &
        & \partial_1 C \ar[dd] \ar[dl]
        \\
        \partial_0 C \ar[rr,crossing over] \ar[dd]
        &
        & C
        &
        \\
        & X^G \times \partial Y \ar[rr] \ar[dl]
        & 
        & X \times \partial Y \ar[dl]
        \\
        X^G \times Y \ar[rr]
        &
        & X \times Y \ar[uu,leftarrow,crossing over]
        &        
    \end{tikzcd}
    \end{equation}
    in which the front and back face are pushouts, the left face is a pullback, the top face consists of free $G$-spaces, and the map $\partial_{01} C \to X^G \times \partial Y$ is 2-connected on underlying spaces, then the right face is also a pullback.
    In particular, the whole cube is pulled back from its front face along the map $X \times \partial Y \to X \times Y$.
    By the codimension assumption on $X$, any such cube corresponding to an isovariant structure on $X \times (Y, \partial Y)$ satisfies these conditions and is thus determined by its front face.

    Now suppose that the cube \cref{diag:isovariant_cube} is pulled back from its front face, which is obtained as the unstraightening of a map $Y \to \isovstr(X)$.
    The map $(\partial_0 C, \partial_{01} C) \to (X^G \times Y, X^G \times \partial Y)$ is then a relative equivariant spherical fibration as the unstraightening of a spherical fibration over $Y$.
    The front and back face are also clearly pushout squares and the top face carries a free $G$-action.
    The top face is a Poincar\'e triad as a consequence of \cref{lem:basic_facts_pd_pairs} as the fibres of $(C, \partial_0 C) \to Y$ are Poincar\'e pairs by assumption.
    This shows that the cube gives a relative isovariant structure on $X \times (Y, \partial Y)$.

    Conversely, suppose that the cube \cref{diag:isovariant_cube} defines a relative isovariant structure on $X \times (Y, \partial Y)$.
    The map $\partial_{01}C \to X^G \times \partial Y$ is a spherical fibration whose fibres have dimension at least 3, so it is 2-connected.
    The discussion above shows that the cube is pulled back from its front face.
    It remains to show that the front face is obtained as the unstraightening of a map $Y \to \isovstr(X)$, or equivalently, that the fibre of the front face over each point in $Y$ is an isovariant structure on $X$.
    The map $\partial_0 C \to X^G \times Y$ is an equivariant spherical fibration, so the fibres $\partial_0 C_y \to X^G$ over $y \in Y$ are also equivariant spherical fibrations.
    The fibres $\partial_0 C_y$ and $C_y$ clearly are free $G$-spaces and we just need to show that $(C_y, \partial_0 C_y)$ is a Poincar\'e pair.
    $\partial_0 C_y$ is the total space of a spherical fibration over the Poincar\'e space $X^G$ and thus a compact Poincar\'e space itself.
    Furthermore, the square
    \begin{equation*}
    \begin{tikzcd}
        \partial_0 C^e_y \ar[r] \ar[d]
        & C^e_y \ar[d]
        \\
        X^G \ar[r]
        & X^e
    \end{tikzcd}
    \end{equation*}
    is a pushout square in which the left vertical map is 2-connected and all spaces except for $C^e_y$ are compact, so $C^e_y$ is also compact, see e.g. \cite[Lemma 2.48]{Lueck25}.
    To show that $(C^e_y, \partial_0 C_y)$ is a Poincar\'e pair we now apply the subtraction result from \cref{lem:basic_facts_pd_pairs} to the square above, using that $X^e$ is a Poincar\'e space and $(X^G, \partial_0 C_y)$ a Poincar\'e pair.
    This completes the proof.
\end{proof}

Our goal will be to construct an isovariant structure on a semifree $G$-Poincar\'e space $X$ if the codimension $\dim(X^e)-\dim(X^G)$ is large.
For this, note that a stable variant of the spherical fibration $p \colon \partial C \to X^G$ always exists.

\begin{defn}
    The \textit{stable normal bundle} of a semifree $G$-Poincar\'e space $X$ is the parametrised spectrum
    \begin{equation*}
        \nu_X = \infl D_{X^G} \otimes \equivdualisingspectrum{X}{G}^{-1} \in (\spectra_G)^{X^G}
    \end{equation*}
\end{defn}

It turns out that given an isovariant structure on $X$, the stable normal bundle $\nu_X$ always identifies with a certain stabilisation of $p \colon \partial C \rightarrow X$, which we now recall.
\begin{defn}\label{def:join_stabilisation}
    We define the \textit{join stabilisation} of $G$-spaces as the composite
    \begin{equation*}
        \Sigma^\infty_J \colon \spc_G \xrightarrow{- \join S^0} \spc_{G,*} \xrightarrow{\Sigma^\infty} \spectra_G,
    \end{equation*}
    where the join $X \join S^0$ is the pushout of $* \leftarrow X \rightarrow *$ endowed with the left point as basepoint. Note that it is possible to do this in families, so that one can associate a local system of $G$-spectra to a local system of (unpointed) $G$-spaces over a base.
\end{defn}

\begin{obs}\label{obs:stable_normal_bundle}
    Suppose that we are given an isovariant structure \cref{diag:isovariant_structure} on $X$.
    Then there is an identification $\nu_X \simeq \Sigma_J^\infty p$ of the stable normal bundle $\nu_X$ and the fibrewise join stabilisation of $p$.
    A proof of this uses gluing results for equivariant Poincar\'e pairs from \cite{BHKK_PDP}.
    Let us just give the argument for the underlying nonequivariant spectra, which is sufficient for this article.
    Recall from \cref{thm:basic_properties_equivariant_dualising_spectrum} that there is an equivalence $\res_e^G D_{X, G} \simeq i^* D_{X^e}$, where $i \colon X^G \to X^e$ denotes the inclusion, from which we obtain $\res_e^G \nu_X \simeq D_{X^G} \otimes i^* D_{X^e}^{-1}$.
    Now the claim follows from  \cref{lem:basic_facts_pd_pairs}, which gives us
    \begin{equation*}
        i^* D_{X^e} \simeq D_{(X^{G}, \partial C)} \simeq D_{X^G} \otimes \Sigma^\infty_J p.
    \end{equation*}

    In particular, we get that the fibre of $p^e$ over a point $x \in X^e$ is a $\dim(X^e)-\dim(X^G)-1$-dimensional sphere.
\end{obs}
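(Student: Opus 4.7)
The plan is to bootstrap the nonequivariant argument already sketched to a full equivariant identification $\nu_X \simeq \Sigma^\infty_J p$ by invoking the genuine-equivariant analogues of the gluing and spherical fibration statements from \cref{lem:basic_facts_pd_pairs}, as developed in \cite{BHKK_PDP}. The main obstacle---and presumably the reason the full proof is deferred to the companion article---is that one first needs these parametrised Poincar\'e duality results set up for cubes of $G$-spaces with mixed free and fixed isotropy; once that infrastructure is in place, the computation is formal.

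First I would argue that $(C, \partial C)$ is a $G$-Poincar\'e pair. By condition (2) of \cref{def:isovariant_structure} the $G$-action on both $C$ and $\partial C$ is free, so their equivariant dualising systems agree with the inflations of the nonequivariant dualising systems of $C^e$ and $\partial C^e$; condition (3) then upgrades this to the equivariant Poincar\'e statement. An analogous inflation argument applied to the trivial-action space $X^G$ yields that $X^G$ is a $G$-Poincar\'e space with $\equivdualisingspectrum{X^G}{G} \simeq \infl D_{X^G}$.

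Next, I would invoke the $G$-equivariant version of the pushout/subtraction lemma \cref{lem:basic_facts_pd_pairs}(1) on the diagram \cref{diag:isovariant_structure}: since $X$ and $(C, \partial C)$ are both $G$-Poincar\'e, this promotes $(X^G, \partial C)$ to an equivariant Poincar\'e pair and produces an equivalence
\begin{equation*}
    D_{(X^G, \partial C), G} \simeq i^* \equivdualisingspectrum{X}{G}
\end{equation*}
of parametrised $G$-spectra over $X^G$, where $i \colon X^G \to X$ denotes the fixed-point inclusion. Combining this with the equivariant version of \cref{lem:basic_facts_pd_pairs}(3), applied to the equivariant spherical fibration $p \colon \partial C \to X^G$ over the $G$-Poincar\'e space $X^G$, yields
\begin{equation*}
    \equivdualisingspectrum{X^G}{G} \otimes (\Sigma^\infty_J p)^{-1} \simeq D_{(X^G, \partial C), G} \simeq i^* \equivdualisingspectrum{X}{G}.
\end{equation*}
Rearranging and substituting $\equivdualisingspectrum{X^G}{G} \simeq \infl D_{X^G}$ then gives $\Sigma^\infty_J p \simeq \infl D_{X^G} \otimes \equivdualisingspectrum{X}{G}^{-1} = \nu_X$, which is the claimed identification. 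The ``in particular'' statement on the fibre dimension follows by restricting this equivalence to underlying nonequivariant spectra and reading off dimensions, exactly as the inline sketch carries out.
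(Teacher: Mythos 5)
Your proposal goes for the genuine equivariant equivalence $\nu_X \simeq \Sigma^\infty_J p$ in $(\spectra_G)^{X^G}$, whereas the paper's inline argument deliberately proves only the image under $\res_e^G$, which is what is actually used later. Your outline --- show $(C,\partial C)$ is a free $G$-Poincar\'e pair, apply an equivariant gluing lemma to the pushout \cref{diag:isovariant_structure} to get $D_{(X^G,\partial C), G} \simeq i^*\equivdualisingspectrum{X}{G}$, then combine with an equivariant spherical-fibration identity and rearrange --- is the natural upgrade of the nonequivariant calculation, and it is surely close to what the paper has in mind when it points to \cite{BHKK_PDP}. You also correctly flag that the equivariant analogues of \cref{lem:basic_facts_pd_pairs}(1) and (3) are inputs drawn from \cite{BHKK_PDP} rather than from this paper.

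What you pass over too quickly is the identification $\equivdualisingspectrum{X^G}{G} \simeq \infl D_{X^G}$ as local systems $X^G \to \spectra_G$. You present it as ``an analogous inflation argument'', but the free case does not provide the right analogy: for a free $G$-space the equivariant dualising system has no data above the trivial subgroup at all, so there is nothing nontrivial to match against an inflation. For the trivially acted $X^G$ one needs an honest equivalence in $\spectra_G$, and nothing in this paper delivers it. \cref{thm:basic_properties_equivariant_dualising_spectrum} only pins down $\res_e^G$ and $\Phi^G$ of both sides, which does not determine a $G$-spectrum, and \cref{lem:dualising_singular_part_trivial} only gives the comparison in the quotient $\spectra_G/e$ --- i.e.\ exactly what survives after $\res_e^G$, which is the version the paper proves. (This is also presumably why $\nu_X$ is defined via $\infl D_{X^G}$ rather than via $\equivdualisingspectrum{X^G}{G}$.) So this comparison must itself be supplied, either from \cite{BHKK_PDP} or by a separate argument, before the final rearrangement in your display identifies $\Sigma^\infty_J p$ with $\nu_X$ on the nose.
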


\begin{strategy}\label{strategy:constructing_isovariant_structure}
The strategy to construct an isovariant structure on $X$ now consists of the following two steps:
\begin{enumerate}
    \item Construct a destabilisation of $\nu_X$, that is a free equivariant spherical fibration $p \colon \partial C \to X$ together with an equivalence $\nu_X \simeq \Sigma^\infty_J X$;
    \item Build the complement $C$ from obstruction theory using Klein's nonequivariant existence result \cref{thm:nonequivariant_existence_embeddings}.
\end{enumerate}
\end{strategy}
These two steps are completely independent.
Step (1) heavily depends on the group $G$ and relies on a good understanding of $\Pic(\spectra_G)$. It is the main content of \cref{sec:destabilisations}.
Step (2) is the content of \cref{sec:existence_isovariant_structures}.
\section{Destabilisations}\label{sec:destabilisations}

This section concerns itself with destabilisations of certain equivariant spherical fibrations, as outlined in the first step of \cref{strategy:constructing_isovariant_structure}.
The ultimate goal is to construct, for a semifre $G$-Poincar\'e space satisfying suitable codimension conditions on the fixed point set, a destabilisation of the stable normal bundle $\nu_X = D_{X^G} \otimes D_{X, G}^{-1} \colon X^G \to \spectra_G$ by finding a lift along the join stabilisation map $\Sigma^\infty_J \colon \hrep_{G} \to \Pic(\spectra_G)$.
In the semifree case, the stable normal bundle $\nu_X$ carries some additional information witnessing that it carries a free $G$-action in a certain sense.
Passing to this finer variant of $\Pic(\spectra_G)$ is crucial to get good connectivity estimates for the stabilisation map $\Sigma^\infty_J$.

\subsection{Generalised homotopy representations and their stabilisations}

Recall from \cref{def:generalised_hrep} the notion of generalised $G$-homotopy representations.
We write $\hrepfree{G} \subseteq \spc_G^{\omega, \simeq}$ for the full subgroupoid of those generalised homotopy representations $X$ which are free, i.e., $X^H = \varnothing$ for $e \neq H \le G$.
The join stabilisation $\Sigma^\infty_J \colon \spc_G \to \spectra_G$ from \cref{def:join_stabilisation} restricts to a map $\Sigma^\infty_J \colon \hrepfree{G} \to \Pic(\spectra_G^\omega)$.

Next, we describe a variant of free invertible $G$-spectra.
Denote by $\spectra^\omega_G/e$ the Verdier quotient by the thick subcategory $\langle G/e \rangle \subseteq \spectra_G^\omega$ generated by $\Sigma^\infty_+ G/e$, i.e., the smallest subcategory containing it closed under finite limits, finite colimits and retracts. 
This happens to be a tensor ideal, so the quotient $\spectra^\omega_G/e$ admits a unique symmetric monoidal structure making the projection $\spectra^\omega_G \rightarrow \spectra^\omega_G/e$ symmetric monoidal.
Let us start with the following observation.
\begin{lem}\label{lem:stabilisation_hrep_mod_e_trivial}
    The composite
    \[ \hrepfree{G} \xrightarrow{\Sigma^\infty_J} \spectra_G^\omega \rightarrow  \spectra_G^\omega/e \]
    is constant with value $\unit$.
\end{lem}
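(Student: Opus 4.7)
The plan is to extract a natural cofiber sequence relating the join stabilisation to the more familiar $\Sigma^\infty_+$, and then observe that the latter vanishes in the quotient $\spectra^\omega_G/e$ by design.

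First, I would unpack the definition of the join stabilisation. By \cref{def:join_stabilisation}, $V \star S^0$ is defined as the pushout of $* \leftarrow V \to *$ in $\spc_G$ with basepoint given by (say) the left cone point. Since $\Sigma^\infty_+ \colon \spc_G \to \spectra_G$ preserves colimits, applying it produces a pushout square
\begin{equation*}
\begin{tikzcd}
\Sigma^\infty_+ V \ar[r] \ar[d] \ar[dr, phantom, very near end, "\ulcorner"] & \unit \ar[d] \\
\unit \ar[r] & \Sigma^\infty_+(V \star S^0)
\end{tikzcd}
\end{equation*}
in $\spectra_G$. Because $\spectra_G$ is stable, the cofibre of $\unit \to \Sigma^\infty_+(V \star S^0)$ agrees with the cofibre of $\Sigma^\infty_+ V \to \unit$, and the former is naturally identified with $\Sigma^\infty(V \star S^0) = \Sigma^\infty_J V$ by splitting off the chosen basepoint. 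This produces a cofibre sequence
\begin{equation*}
    \Sigma^\infty_+ V \to \unit \to \Sigma^\infty_J V
\end{equation*}
in $\spectra_G$, natural in $V$.

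Next I would verify that $\Sigma^\infty_+ V$ lies in the thick subcategory $\langle G/e \rangle$. Since $V$ is a compact free $G$-space, it admits a finite $G$-CW structure with cells of the form $G/e \times D^n$, so $\Sigma^\infty_+ V$ is built from finitely many (de)suspensions of $\Sigma^\infty_+ G/e$ via iterated cofibre sequences. Hence $\Sigma^\infty_+ V$ becomes zero in the Verdier quotient $\spectra^\omega_G/e$.

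Passing the natural cofibre sequence through the symmetric monoidal localisation $\spectra_G^\omega \to \spectra^\omega_G/e$, the left-hand term vanishes, so the natural map $\unit \to \Sigma^\infty_J V$ becomes an equivalence. Assembling these equivalences provides the required natural equivalence in $\func(\hrepfree{G}, \spectra^\omega_G/e)$ between the constant functor $\unit$ and the composite in the statement. There is no real obstacle here; the only thing to be slightly careful about is identifying $\Sigma^\infty(V \star S^0)$ with $\cofib(\Sigma^\infty_+ V \to \unit)$ naturally, which is immediate from the pushout presentation once one keeps track of the basepoint.
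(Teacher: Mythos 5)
Your proof is correct and takes essentially the same approach as the paper's: both arguments come down to observing that the cofibre of $\unit \to \Sigma^\infty_J V$ (equivalently, a shift of $\Sigma^\infty_+ V$) lies in $\langle G/e \rangle$ because $V$ is compact and free, so the map becomes an equivalence in the Verdier quotient. The only minor imprecision in your write-up is the claim that a compact free $G$-space \emph{admits} a finite free $G$-CW structure; what is true is that it is a retract of one, which suffices since $\langle G/e \rangle$ is closed under retracts.
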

\begin{proof}
    The map factors as the composite
    \begin{equation*}
        \hrepfree{G} \xrightarrow{\join S^0} (\spc_{G, *}^\omega)_{S^0/}^\free \xrightarrow{\Sigma^\infty} (\spectra_G^\omega)_{\sphere/} \to \spectra^\omega_G/e,
    \end{equation*}
    where $(\spc_{G,*}^\omega)_{S^0/}^\free \subseteq (\spc_{G,*}^\omega)_{S^0/}$ is the full subcategory of those $S^0 \to Y$ inducing an equivalence on fixed points for all subgroups $e \neq H \leq G$.
    In particular, the cofibre of the induced map $\Sigma^\infty S^0 \to \Sigma^\infty Y$ lies in the subcategory $\langle G/e \rangle$, showing that $\Sigma^\infty S^0 \to \Sigma^\infty Y$ becomes an equivalence in the quotient $\spectra_G^\omega/e$.
\end{proof}

This motivates the following definition.
\begin{defn}
    A \textit{free invertible $G$-spectrum} is an invertible $G$-spectrum $E \in \spectra^\omega_G$ together with an equivalence $E \simeq \unit$ in the Verdier quotient $\spectra_G^\omega/e$. The moduli space of free invertible $G$-spectra is denoted by 
    \[ \Pic(\spectra_G^\omega)^\free = \Pic(\spectra_G^\omega) \times_{\Pic(\spectra_G^\omega/e)} \{\unit\}. \]
    The \textit{dimension} of a free invertible $G$-spectrum $E$ is $k$, where $k \in \bbZ$ is the degree such that $E^e \simeq \sphere^k$.
\end{defn}

As we have seen above, we can factor $\Sigma^\infty_J$ over a map
\[ \Sigma^\infty_J \colon \hrepfree{G} \rightarrow \Pic(\spectra_G^\omega)^\free \]
which in fact is compatible with the decomposition of both sides according to dimension: letting $\hrepfree{G}(k) \subset \hrepfree{G}$ denote the components of the 
$k-1$-dimensional generalised homotopy representations\footnote{The convention that $\hrepfree{G}(k)$ contains $k-1$ dimensional spheres is made so that these stabilise to the $k$-dimensional sphere spectrum under $\Sigma^\infty_J$, similar to the indexing convention in the definition of the space $G(k) = \mathrm{hAut}(S^{k-1})$ from surgery theory.}, and $\Pic(\spectra_G^\omega)^\free(k) \subset \Pic(\spectra_G^\omega)^\free$ the components of the $k$-dimensional free invertible $G$-spectra, the map restricts to a map $\Sigma^\infty_J \colon \hrepfree{G}(k) \rightarrow \Pic(\spectra_G^\omega)^\free(k)$. 
By \cref{lem:dualising_singular_part_trivial} the stable normal bundle $\nu_X$ also admits a refinement $\nu_X \colon X^G \to \Pic(\spectra^\omega_G)^\free(d^e-d^G)$ if $X$ is semifree, and both $X^e$ and $X^G$ are equidimensional.
The main theorem of this section is the following, which allows us to construct a destabilisation of $\nu_X$.
\begin{thm}
    \label{thm:destabilisations_of_free_hreps}
    Let $G$ be a periodic finite group, and let $k \geq 2$.
    Then the map
    \[ \Sigma^\infty_J \colon \hrepfree{G}(k) \rightarrow \Pic(\spectra_G^\omega)^\free(k) \]
    is $k-1$-connected.
\end{thm}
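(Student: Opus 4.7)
My plan is to decompose both sides of the map $\Sigma^\infty_J$ into components -- each of which is a classifying space of an automorphism group -- and then analyze the map on components by an equivariant Freudenthal-type argument, with a separate treatment of $\pi_0$-surjectivity using the periodicity hypothesis on $G$ and the category of semifree $G$-spectra from \cref{sec:spectra_specified_isotropy}.

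First, I would observe that both moduli spaces are $1$-groupoids of a particularly simple shape. Concretely, $\hrepfree{G}(k) \simeq \coprod_V B\aut_G(V)$ indexed by $G$-homotopy equivalence classes of free $G$-actions on $S^{k-1}$, and $\Pic(\spectra_G^\omega)^\free(k) \simeq \coprod_E B\aut^\free(E)$, where the automorphisms on the right are self-equivalences of $E$ preserving the chosen trivialization in the Verdier quotient $\spectra_G^\omega/e$. The stabilization map respects this decomposition.

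For connectivity on each component, it suffices to show the map $B\aut_G(V) \to B\aut^\free(\Sigma^\infty_J V)$ is $(k-1)$-connected, equivalently that $\aut_G(V) \to \aut^\free(\Sigma^\infty_J V)$ is $(k-2)$-connected on the identity components. Since $V$ is a free $G$-space whose underlying space is $S^{k-1}$ (so all isotropy is trivial on $V$), the comparison of unstable and stable equivariant self-maps reduces cleanly to the classical Freudenthal suspension theorem applied to $S^{k-1}$. This yields connectivity on the order of $2(k-1) - 1$, which comfortably dominates the required bound $k-2$ for $k \ge 2$. The right-hand automorphism space is a priori larger than the stable self-equivalence space, but using the identification of $\aut^\free$ given by the semifree $G$-spectrum framework of \cref{sec:spectra_specified_isotropy}, this factor is shown to cause no additional trouble in the Freudenthal range.

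The main obstacle is $\pi_0$-surjectivity: that every free invertible $G$-spectrum $E$ of dimension $k$ arises as $\Sigma^\infty_J V$ for some free generalized $G$-homotopy representation $V$ of dimension $k-1$. This is where periodicity of $G$ is essential; it both ensures such spheres $V$ exist (so the left hand side is nonempty when the right hand side is) and guarantees the vanishing of the algebraic obstruction to realizing $E$ cellularly as a suspension spectrum. Concretely, I would work cellularly in the category of semifree $G$-spectra built in \cref{sec:spectra_specified_isotropy}: the trivialization of $E$ in $\spectra_G^\omega/e$ together with invertibility pins down a cell structure by free cells, and combining this with periodicity produces the desired unstable destabilization $V$. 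This destabilization step is the hard part of the argument, as the Freudenthal input in the preceding paragraph is comparatively standard.
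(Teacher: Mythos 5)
The overall architecture you propose — decompose both moduli spaces into classifying spaces of automorphism groups, get $\pi_0$-bijectivity from periodicity/classification, get higher connectivity from a Freudenthal-type estimate routed through semifree $G$-spectra — matches the paper's strategy. But your connectivity estimate is wrong, and the way it goes wrong points at a real gap in your understanding of the argument.

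You claim that because $V$ is a free $G$-space with underlying sphere $S^{k-1}$, "the comparison of unstable and stable equivariant self-maps reduces cleanly to the classical Freudenthal suspension theorem applied to $S^{k-1}$," yielding connectivity $\approx 2(k-1)-1 = 2k-3$. This overcounts: Freudenthal gives that the join-stabilisation unit $V \to \hom_{S^m/}(*, V\join S^m)$ is $(2(k-2)+1)$-connected on underlying spaces, but to pass to automorphism spaces you must then map \emph{out of} $V$, which has a free $G$-CW structure of dimension $k-1$ (this is Swan's dimension bound, \cref{rmk:dimension_of_free_generalised_homotopy_representation}); by equivariant obstruction theory (\cref{lem:connectivity_on_mapping_spaces}) that mapping-out step costs you $k-1$ in connectivity. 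The paper's \cref{cor:join_stabilisation_connectivity} records precisely the net estimate $(2(k-2)+1) - (k-1) = k-2$, which is tight — the theorem's bound is exact, not "comfortably dominated." Your $2k-3$ figure would prove a strictly stronger statement, which should have been a warning sign.

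Moreover, the reduction to \emph{nonequivariant} Freudenthal is not available even morally: as soon as you form $V \join S^0$ you have created two fixed points, so the object you are stabilising lives in the semifree world and not the free one. The paper therefore needs a genuinely equivariant Freudenthal theorem for semifree $G$-spaces (\cref{lem:semi_free_freudenthal}), whose fixed-point estimate $\min\{2c^G+1, c^e\}$ mixes the fixed-point and free strata — there is no way to extract it from the nonequivariant statement alone. The actual comparison in \cref{prop:connectivity_range_for_automorphisms} runs through \emph{two} stabilisation maps, one for the join (fixed connectivity $k-2$) and one for $\Sigma^\infty$ in $\spectra_G^{\mathrm{sf}}$ (connectivity tending to $\infty$ with the auxiliary $l$); your sketch collapses them into one, which is where the dimension loss got dropped.

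Finally, your $\pi_0$ argument ("the trivialization of $E$ together with invertibility pins down a cell structure by free cells, and periodicity produces the destabilization") is essentially a promise to redo Swan's classification without naming it. The paper's \cref{cor:classification_of_free_hreps_up_to_equivalence} gets $\pi_0$-\emph{bijectivity} by matching Swan's invariant $(A, k)$ (\cref{cor:unoriented_classification}) against Krause's pullback description of $\Pic(\spectra_G^\omega)$ — that is the concrete content your sketch would need to supply.
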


We recall the notion of a periodic finite group in the next subsection and show that the map is an isomorphism on path components for $k \geq 2$.
The analysis of the higher homotopy takes up the rest of this section, and we prove \cref{thm:destabilisations_of_free_hreps} at the end of \cref{subsec:automorphisms_versus_stable_automorphisms}.

\subsection{Groups with periodic cohomology and free generalised homotopy representations}

Free generalised homotopy representations have been classified by Swan \cite{Swan_periodic}, who clarified their relation to so-called periodic groups. The classification is in terms of classes in Tate cohomology. To stay consistent with the literature, we consider Tate cohomology with the \textit{cohomological grading convention} --- that is, $\widehat{H}^n(G;\bbZ) = \pi_{-n}(\bbZ^{tG})$. For the following result, an \textit{orientation} on a free generalised homotopy representation $X$ of dimension $d$ is an isomorphism $H_d(X;\bbZ) \simeq \bbZ$.

\begin{thm}[Swan]
    \label{thm:swan_classification}
    Let $G \neq 1, C_2$ be a finite group. There is a bijection
    \[ \begin{Bmatrix}
        \text{oriented free generalised $G$-homotopy representations}\\
        \text{of dimension $d$ up to oriented $G$-homotopy equivalence}
    \end{Bmatrix} \xrightarrow{k} \begin{Bmatrix}
        \text{units $t\in \widehat{H}^*(G;\bbZ$)} \\
        \text{of positive degree}
    \end{Bmatrix}.   \]
\end{thm}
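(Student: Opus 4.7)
The plan is to carry out Swan's classical argument \cite{Swan_periodic} in our homotopy-theoretic setting, which is simpler than the original one for spherical space forms since we need not worry about finiteness obstructions to realising cellular chain complexes geometrically.

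\textbf{The forward map.} Given an oriented free generalised $G$-homotopy representation $X$ of dimension $d$, I would first choose a free $G$-CW approximation $X'$; the cellular chain complex $C_*(X')$ is then a finite complex of finitely generated free $\bbZ G$-modules with $H_0 = H_d = \bbZ$ (canonically trivialised by the augmentation and the chosen orientation) and zero other homology. Splicing with the augmentation produces an exact sequence
\begin{equation*}
    0 \to \bbZ \to C_d \to C_{d-1} \to \cdots \to C_0 \to \bbZ \to 0
\end{equation*}
whose Yoneda class lies in $\mathrm{Ext}^{d+1}_{\bbZ G}(\bbZ,\bbZ) \cong H^{d+1}(G;\bbZ)$; its image $k(X)$ in $\widehat{H}^{d+1}(G;\bbZ)$ is the sought-after element. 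To verify $k(X)$ is a unit, one observes that $C_*(X')$ represents a morphism $\bbZ \to \bbZ[d+1]$ in the derived category of $\bbZ G$-modules which becomes an isomorphism after passing to the Tate (stable module) category, because $C_*(X')$ is a resolution by projectives; this morphism represents $k(X)$, so cup product with $k(X)$ is the periodicity isomorphism $\widehat{H}^*(G;\bbZ) \xrightarrow{\sim} \widehat{H}^{*+d+1}(G;\bbZ)$.

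\textbf{The inverse map.} Given a unit $t \in \widehat{H}^{d+1}(G;\bbZ)$, Yoneda produces a finite exact sequence of finitely generated projective $\bbZ G$-modules representing $t$. Stabilising by free summands lets us assume all modules are free. The central construction is then to realise this resolution as the cellular chain complex of a free $G$-CW complex $Y$ dimension by dimension: start with a wedge of free $G$-cells realising $C_0$, then inductively attach cells whose boundary in the chain complex matches the differential of the resolution, using the equivariant Hurewicz theorem for free $G$-complexes to find the required attaching maps. The unit condition on $t$ forces $Y$ to have the integral homology of $S^d$ with trivial action, and the Whitehead theorem then upgrades $Y$ to a free generalised homotopy representation with $k(Y) = t$.

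\textbf{The main obstacle.} The most delicate step is the geometric realisation in the inverse direction: one must arrange the attaching maps at each stage so that they assemble into a $G$-CW complex whose chain complex is isomorphic, and not merely chain homotopy equivalent, to the prescribed resolution. In Swan's original setting of spherical space forms, this is precisely where the finiteness obstruction in $\widetilde{K}_0(\bbZ G)$ must be analysed, but in our purely homotopy-theoretic setting the freedom to add extra free cells makes this a routine obstruction-theoretic calculation, and checking that $k$ and the realisation construction are mutually inverse reduces to a direct comparison of Yoneda splicings.
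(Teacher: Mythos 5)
Your proposal is correct and matches the paper's treatment: the paper constructs the forward map $k$ in the same way (phrased via $\Sigma^\infty S^0 \otimes \bbZ \to \Sigma^\infty(X \join S^0) \otimes \bbZ$ in $\module_{\bbZ[G]}$, which is just the module-spectrum formulation of your Yoneda splicing), gives the identical argument that $k(X)$ is a unit because the chain complex of the free $G$-space vanishes in the stable module category, and then delegates the rest to Swan, Wall, and Davis--Milgram, whose argument is exactly your sketch for the inverse map. One mild imprecision to tighten: the cellular chains of a finitely dominated free $G$-space need not form a finite complex of finitely generated \emph{free} $\bbZ G$-modules, only a complex chain homotopy equivalent to a finite one of finitely generated \emph{projectives} (Wall finiteness theory), and in the inverse direction the Eilenberg swindle that frees up the projectives produces infinitely many cells --- which is fine for finitely dominated spaces, and is exactly what the paper records in \cref{rmk:dimension_of_free_generalised_homotopy_representation}.
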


\begin{rmk}
    The group $C_2$ is excluded just for convenience of formulation. But let us note that the $d$-sphere with the antipodal action is the unique free $C_2$-homotopy representation for each dimension $d$.
\end{rmk}

\begin{rmk}
    \label{rmk:dimension_of_free_generalised_homotopy_representation}
    Swan's construction in fact shows that every oriented free generalised $G$-homotopy representation of dimension $d$ admits a (possibly infinite) cell structure of dimension $d$. The construction involves an Eilenberg Swindle, see \cite[Lem. 2.22.]{Davis_Milgram_survey}.
\end{rmk}

\begin{cons}
    The map $k$ in \cref{thm:swan_classification} can be constructed as follows. Given a free generalised homotopy representation $X$ of $G$ of dimension $d$, we define its \textit{$k$-invariant} to be the homotopy class of maps
    \begin{align*}
        k(X) = ( \Sigma^\infty S^0 \otimes \bbZ \rightarrow \Sigma^\infty X \join S^0 \otimes \bbZ) \in &\pi_0 \map_{\module_{\bbZ[G]}}(\Sigma^\infty S^0 \otimes \bbZ,\Sigma^\infty X \join S^0 \otimes \bbZ)\\ & \simeq \pi_0 \map_{\module_{\bbZ[G]}}(\bbZ, \bbZ[d+1]) = H^{d+1}(G;\bbZ).
    \end{align*}
    Using that the map $H^{d+1}(G;\bbZ) \rightarrow \widehat{H}^{d+1}(G;\bbZ)$ is an isomorphism, we may view $k(X)$ as an element in Tate cohomology as well. To see that it is in fact a unit, note that $k(X)$ is the image of a map of $G$-spectra which is in fact an equivalence in the stable module category $\stmodSmall_\spectra(G)$, as $X$ carries a free $G$-action.
    Hence it induces a  $\bbZ^{tG}$-linear equivalence $\bbZ^{tG} \simeq (\Sigma^\infty S^0 \otimes \bbZ)^{tG}\rightarrow (\Sigma^\infty X \join S^0 \otimes \bbZ)^{tG} \simeq \bbZ^{tG}[d+1]$. This equivalence is given by multiplication with $k(X)$, so that $k(X)$ has to be a unit.
\end{cons}

\begin{proof}[References for \cref{thm:swan_classification}]
    The result can be extracted from the proof of \cite[Thm. 4.1.]{Swan_periodic}, as mentioned in \cite{Wall_spheres}. A proof is given in \cite[]{Davis_Milgram_survey}.
\end{proof}

\begin{rmk}[Unoriented classification]
    \label{rmk:unoriented_classification}
    To formulate an unoriented version of \cref{thm:swan_classification} that is used later, it is useful to consider the \textit{stable module category} of $G$ with $\bbZ$-coefficients. The category $\func(BG,\module_{\bbZ}^\omega)$ is symmetric monoidal with the pointwise symmetric monoidal structure.
    The stable subcategory generated by $\bbZ[G]$ is a tensor ideal, so that the quotient map
    \[ \func(BG,\module_\bbZ^\omega) \rightarrow \stmodSmall_\bbZ(G) \coloneqq \func(BG,\module^\omega_\bbZ)/\langle \bbZ[G] \rangle \]
    is symmetric monoidal, and the quotient is called the \textit{stable module category} of $G$ with $\bbZ$-coefficients. 
    For $X,Y \in \func(BG,\module_\bbZ^\omega)$, maps in $\stmodSmall_\bbZ(G)$ can be computed by
    \begin{equation}
        \label{eq:maps_in_stmod}
       \map_{\stmodSmall_\bbZ(G)}(X,Y) = \map_{\module_\bbZ}(X,Y)^{tG} 
    \end{equation}
    as shown in \cite[Lem. 4.2.]{krause2020picard}.

    Now, given a generalised homotopy representation $X$, letting $A = H_d(X;\bbZ)$, the construction of $k(X)$ still makes sense as a map $\bbZ \rightarrow A[d+1]$ in $\stmodSmall_\bbZ(G)$. Here, $A$ is considered with the trivial action, reflecting the triviality of the $G$-action on $H_d(X;\bbZ)$ since the action is free and $d$ odd. So we consider the set of tuples $(A,k)$ where $A$ is an infinite cyclic group and $k \colon \bbZ \rightarrow A[d+1]$ is an isomorphism in $\stmodSmall_\bbZ(G)$. Two such tuples $(A,k)$ and $(A',k')$ are called \textit{equivalent} if there is an isomorphism $\alpha \colon A \rightarrow A'$ such that the diagram
    \begin{equation}
        \begin{tikzcd}
            & A[d+1] \ar[dd, "\alpha"] \\
            \bbZ \ar[ur, "k"] \ar[dr, "k'"'] &\\
            & A'[d+1]
        \end{tikzcd}
    \end{equation}
    commutes up to homotopy. A choice of isomorphism $A \cong \bbZ$ identifies $k$ with a unit in $\widehat{H}^{d+1}(G;\bbZ)$ according to the computation of maps in the stable module category \cref{eq:maps_in_stmod}.
\end{rmk}
As a consequence of \cref{thm:swan_classification} we obtain the following unoriented classification of generalised homotopy representations.
\begin{cor}\label{cor:unoriented_classification}
    For $G \neq 1, C_2$ a finite group, the construction above provides an equivalence
     \[ \begin{Bmatrix}
    \text{free generalised $G$-homotopy representations}\\
    \text{of dimension $d$ up to $G$-homotopy equivalence}
\end{Bmatrix} \xrightarrow{(H_d,k)} \begin{Bmatrix}
    \text{tuples $(A,k)$} \\
    \text{up to equivalence}
\end{Bmatrix}.   \]
\end{cor}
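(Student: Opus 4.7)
The plan is to reduce this to the oriented Swan classification \cref{thm:swan_classification} by quotienting both sides by the natural $\mathrm{Aut}(\bbZ) = \{\pm 1\}$-action of re-orientation. First, I would verify that the map $(H_d, k)$ is well-defined on equivalence classes: for $G$ periodic with $G \neq 1, C_2$, a Lefschetz argument forces the dimension $d$ of any free generalised $G$-homotopy representation to be odd, so the $G$-action on $H_d(X;\bbZ) \cong \bbZ$ is automatically trivial. The construction of $k(X)$ sketched before \cref{thm:swan_classification} then produces, by the same $\bbZ^{tG}$-linearity argument, an isomorphism $\bbZ \to H_d(X;\bbZ)[d+1]$ in $\stmodSmall_\bbZ(G)$, and any $G$-homotopy equivalence $X \simeq X'$ yields a compatible isomorphism of the associated tuples.

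For surjectivity, given a tuple $(A, k)$, I would pick an arbitrary isomorphism $\alpha \colon A \xrightarrow{\sim} \bbZ$, converting $k$ into a positive-degree unit in $\widehat{H}^{d+1}(G;\bbZ)$. By \cref{thm:swan_classification} this unit is realised by an oriented representation $X$, which after forgetting orientation provides a preimage of $(A, k)$ via $\alpha$. For injectivity, given representations $X, Y$ whose tuples are equivalent via $\alpha \colon H_d(X;\bbZ) \to H_d(Y;\bbZ)$, a choice of orientation on $X$ transported along $\alpha$ yields a compatible orientation on $Y$, under which both have the same associated unit in Tate cohomology. \cref{thm:swan_classification} then delivers an oriented $G$-homotopy equivalence, whose underlying $G$-map witnesses the desired equality of unoriented classes.

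The main point I expect to need to verify is that Swan's classification map is equivariant with respect to the $\{\pm 1\}$-action (re-orientation on the left, and $(A,k) \mapsto (A,-k)$, or equivalently negation of the unit, on the right), so that the correspondence passes cleanly to orbit sets. This is essentially built into the construction of $k$, since re-orienting $X$ by the sign $-1$ postcomposes the $k$-invariant with $-\mathrm{id}$. The hypothesis $G \neq C_2$ is used precisely to guarantee that $d$ is odd and that $\mathrm{Aut}(\bbZ)$ is indeed the only source of orientation ambiguity; otherwise one would have to account additionally for orientation-reversing free actions on even-dimensional spheres.
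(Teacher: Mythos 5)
Your proof is correct and matches the approach the paper implicitly takes: the paper presents the corollary as an immediate consequence of \cref{thm:swan_classification} after the setup in \cref{rmk:unoriented_classification}, which is exactly the quotient-by-orientations argument you spell out (including the observation that $d$ must be odd for $G \neq 1, C_2$, which the paper also uses to see that the $G$-action on $H_d$ is trivial). The surjectivity and injectivity checks via a choice of trivialisation of $A$ are the content the paper leaves to the reader, and you have filled them in correctly.
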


The above formulation of the unoriented classification has the advantage that it is very easy to relate it to Krause's stable classification of invertible $G$-spectra.

\begin{cor}
\label{cor:classification_of_free_hreps_up_to_equivalence}
    Let $G$ be a (nontrivial) group and $d \geq 1$.
    The map $\Sigma^\infty_J$ induces a bijection
    \[ \begin{Bmatrix}
        \text{free generalised $G$-homotopy representations}\\
        \text{of dimension $d$ up to $G$-homotopy equivalence}
    \end{Bmatrix} \rightarrow \begin{Bmatrix}
        \text{free invertible $G$-spectra} \\
        \text{of dimension $d$ up to equivalence}
    \end{Bmatrix}.   \]
\end{cor}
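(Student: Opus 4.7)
The plan is to identify both sides naturally with the same set of equivalence classes of pairs $(A,k)$ introduced in \cref{rmk:unoriented_classification}, compatibly with $\Sigma^\infty_J$.

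First I would verify well-definedness. Given $X \in \hrepfree{G}(d)$, the join $X \join S^0$ has underlying space $S^d$ and fixed points $S^0$ at every nontrivial subgroup $H \le G$, so $\Sigma^\infty_J X$ has invertible geometric fixed points at every subgroup, hence is invertible in $\spectra_G^\omega$. A canonical trivialization in the Verdier quotient $\spectra_G^\omega/\langle G/e\rangle$ is provided by \cref{lem:stabilisation_hrep_mod_e_trivial}, so $\Sigma^\infty_J X \in \Pic(\spectra_G^\omega)^\free(d)$.

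Next I would extend the $k$-invariant of \cref{rmk:unoriented_classification} to an assignment $\Pic(\spectra_G^\omega)^\free(d) \to \{(A,k)\}/{\sim}$. For $E \in \Pic(\spectra_G^\omega)^\free(d)$, take $A(E) = \pi_d(E^e)$, which inherits a $\bbZ[G]$-module structure with underlying abelian group infinite cyclic. Applying the symmetric monoidal functor $- \otimes H\bbZ \colon \spectra_G \to \module_{H\bbZ, G}$ to the free trivialization $\unit \simeq E$ in $\spectra_G^\omega/\langle G/e\rangle$, and then passing to the stable module category $\stmodSmall_\bbZ(G) = \module_{\bbZ[G]}^\omega/\langle \bbZ[G]\rangle$, yields a unit $k(E) \colon \bbZ \xrightarrow{\simeq} A(E)[d]$ in $\stmodSmall_\bbZ(G)$. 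A quick inspection unwinding the definition of $k(X)$ in the paragraph before \cref{rmk:unoriented_classification} shows that $(A,k) \circ \Sigma^\infty_J$ recovers the assignment $X \mapsto (H_{d-1}(X;\bbZ), k(X))$, which by \cref{cor:unoriented_classification} is itself a bijection onto $\{(A,k)\}/{\sim}$.

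The proof is thereby reduced to showing that $(A,k) \colon \Pic(\spectra_G^\omega)^\free(d) \to \{(A,k)\}/{\sim}$ is itself injective on $\pi_0$; combined with the factorisation this forces $\Sigma^\infty_J$ to be bijective. I expect this injectivity to be the main obstacle: it amounts to showing that a free invertible $G$-spectrum is determined up to equivalence by its underlying rank-one $\bbZ[G]$-module and its trivialization in the stable module category. I would deduce this from Krause's classification \cite{krause2020picard} of the Picard group of the stable module category, exploiting conservativity of $- \otimes H\bbZ$ on the free invertible part of $\Pic(\spectra_G^\omega)$ once the trivialization data is retained. The case $G = C_2$ that is formally excluded from \cref{thm:swan_classification} must be handled separately, using that the antipodal $d$-sphere is the unique representative on both sides.
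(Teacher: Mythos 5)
Your proposal tracks the paper's proof closely: both sides are identified with the set of equivalence classes of pairs $(A,k)$ via Krause's $1$-cartesian pullback square from \cite{krause2020picard}, and $G = C_2$ is treated by hand. The factorisation through $\{(A,k)\}/{\sim}$ and the comparison with $k(X) = (H_d(X;\bbZ), k(X))$ is exactly how the paper proceeds.

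However, there is a genuine gap in your well-definedness step. You define $A(E) = \pi_d(E^e)$ as a $\bbZ[G]$-module and then claim the pair $(A(E), k(E))$ lies in the set $\{(A,k)\}/{\sim}$ from \cref{rmk:unoriented_classification}. But that set requires $A$ to be an infinite cyclic group \emph{with trivial $G$-action} -- nothing you write rules out that a free invertible $G$-spectrum could carry a sign action on $\pi_d(E^e)$. If that happened, your invariant would not land in the asserted codomain and $\Sigma^\infty_J$ would fail to be surjective, because $\Sigma^\infty_J$ of a free homotopy representation always produces the trivial action (the top homology of an odd free sphere is a trivial $G$-module). The paper addresses precisely this: it first reads off from Krause's square a bijection with pairs $(A,k)$ where $A \in \Pic(\func(BG,\module_\bbZ^\omega))$ is allowed a nontrivial action, then shows the action must be trivial by restricting along $\stmodSmall_\bbZ(G) \to \stmodSmall_\bbZ(C)$ for cyclic $C \le G$ and computing that $\widehat{H}^*(C; A)$ with a sign action is concentrated in odd degrees where it is $\bbZ/2$, so $\bbZ^{tC} \simeq A[d+1]^{tC}$ forces $C = C_2$ and an odd degree -- the excluded case. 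You need to supply this argument; the vague appeal to ``conservativity of $- \otimes H\bbZ$'' does not substitute for it, since conservativity concerns distinguishing objects, not constraining which $\bbZ[G]$-modules can occur.
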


\begin{proof}
    For $G = C_2$ both sides consist of a single element for each $d \geq 1$; for the RHS it is written in \cite[Sec. 8.1]{krause2020picard} and for the LHS it is easy to construct a $C_2$-homotopy equivalence out of the sphere with the antipodal $C_2$-action to any free $C_2$-homotopy representation, so we proceed to the case $G \neq C_2$. 
    In \cite[Thm. 4.16]{krause2020picard}, Krause constructs a $1$-cartesian diagram of spaces as follows. 
    \begin{equation}
    \begin{tikzcd}
        \Pic(\spectra_G^\omega) \ar[r] \ar[d] & \Pic(\spectra_G^\omega/e) \ar[d] \\
        \Pic(\func(BG,\module_\bbZ^\omega)) \ar[r] & \Pic(\stmodSmall_\bbZ(G))
    \end{tikzcd}
    \end{equation}
    This means in particular, that invertible $G$-spectra up to equivalence are determined by an invertible object $L \in \Pic(\spectra^\omega_G/e)$, an invertible object in $A \in \func(BG,\module_\bbZ^\omega)$ and an equivalence $k \colon L \rightarrow A$ in $\stmodSmall_\bbZ(G)$. 
    We can pass to horizontal fibres over $\{\unit\} \rightarrow \Pic(\spectra_G^\omega)$, whose image in $\Pic(\stmodSmall_\bbZ(G))$ is the trivial $\bbZ$-representation, to arrive at the conclusion that elements in $\pi_0\Pic(\spectra^\omega_G)^\free$ are in bijection to the set of tuples $(A,k)$, where $A \in \Pic(\func(BG,\module_\bbZ^\omega))$ and $k \colon \bbZ \rightarrow A$ an equivalence in $\stmodSmall_\bbZ(G)$.
    This identification is set up so that under $\Sigma^\infty_J$ and the unoriented classification \cref{cor:unoriented_classification}, the free generalised homotopy representation corresponding to the datum $(A,k)$ maps to the datum $(A[d+1],k)$, where $A[d+1]$ is the infinite cyclic group $A$ considered in degree $d+1$ with the trivial action. 

    We also note that for free invertible $G$-spectra of dimension $d$, the underlying object in $\func(BG,\module_\bbZ^\omega)$ is concentrated in the degree $d+1$, where it is an infinite cyclic group $A$ with some $G$-action. Our next claim is that this $G$-action is trivial. To see this, note that we have the equivalence $\bbZ \rightarrow A[d+1]$ in $\stmodSmall_\bbZ(G)$. Note that, since for any subgroup $H \leq G$ the restriction of $\bbZ[G]$ splits as a direct sum of copies of $\bbZ[H]$, there is a restriction functor $\stmodSmall_\bbZ(G) \rightarrow \stmodSmall_\bbZ(H)$. In particular, we get induced equivalences $\bbZ \rightarrow \res_H^G A[d+1]$ in $\stmodSmall_\bbZ(H)$ for arbitrary subgroups $H \leq G$, which induce equivalences $\bbZ^{tH} \rightarrow \res_H^G A[d+1]^{tH}$. Since the $G$-action on $A$ is trivial if and only if all its restrictions to cyclic subgroups $C \subset G$ are trivial, we have reduced to the case of a cyclic group. 

   However, if $C$ is cyclic and $A$ carries a nontrivial action (in particular $C$ is nontrivial), then we can compute that $\widehat{H}^*(G;A)$ is concentrated in odd degrees where it is equivalent to $\bbZ/2$. Thus, $\bbZ^{tC} \simeq A[d+1]^{tC}$ can only happen if $d+1$ is odd and $C = C_2$, a case we excluded for this reason.

    All in all, we have seen that $A$ is carries the trivial action. Invoking \cref{rmk:unoriented_classification}, we see that the desired map is indeed a bijection, since both sides are compatibly in bijection to the set of $(A,k)$ where $A$ is an infinite cyclic group and $k \colon \bbZ \rightarrow A[d+1]$ an equivalence in $\stmodSmall_\bbZ(G)$, up to isomorphism.
\end{proof}

A group for which $H^*(G;\bbZ)$ admits a unit in positive (equivalently, nonzero) degree is called a \textit{periodic group}. The classification of periodic groups is classically attributed to Artin and Tate (unpublished), and can be formulated as the following theorem.

\begin{thm}[{\cite[Ch. XII, Sec. 11]{Cartan-Eilenberg}}]
    \label{thm:periodic_groups}
    The following are equivalent for a finite group $G$.
    \begin{enumerate}
        \item Every abelian subgroup of $G$ is cyclic.
        \item For each prime $p$, every $p$-Sylow subgroup of $G$ is either trivial or generalised quaternion.
        \item There is some $n$ such that $H^n(G;\bbZ) \cong \bbZ/\vert G \vert$.
        \item The Tate cohomology ring $\widehat{H}^*(G;\bbZ)$ has a unit in nonzero degree.
    \end{enumerate}
\end{thm}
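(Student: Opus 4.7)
The plan is to establish the circle of implications $(1) \Leftrightarrow (2) \Rightarrow (3) \Leftrightarrow (4) \Rightarrow (1)$, splitting into a group-theoretic half and a cohomological half.

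For the equivalence $(1) \Leftrightarrow (2)$, I would argue purely group-theoretically. For $(2) \Rightarrow (1)$ the key observation is that a generalised quaternion $2$-group has a unique element of order $2$, so any abelian subgroup contains at most one involution and must be cyclic; an abelian subgroup of $G$ decomposes as a product of its Sylow pieces, each of which is an abelian subgroup of a cyclic or generalised quaternion $p$-group and thus cyclic, so the whole abelian subgroup is cyclic. For $(1) \Rightarrow (2)$ I would restrict to a $p$-Sylow $P$ (whose abelian subgroups remain cyclic) and invoke the classical classification of $p$-groups in which every abelian subgroup is cyclic, yielding that $P$ is cyclic or generalised quaternion.

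For $(2) \Rightarrow (3)$ I would begin with the explicit computations that cyclic groups have periodic $\bbZ$-cohomology of period $2$ and generalised quaternion groups of period $4$. Using Cartan--Eilenberg's Sylow detection theorem, the $p$-primary part of $H^*(G;\bbZ)$ injects into $H^*(P;\bbZ)$ via restriction, and its image is characterised by stability under conjugation. Lifting periodicity classes from each Sylow subgroup via the transfer and assembling them produces an integer $n$, a common multiple of the individual Sylow periods, together with a class in $H^n(G;\bbZ)$ whose restriction to each $P$ is a unit of its Tate cohomology, and that class then generates $H^n(G;\bbZ) \cong \bbZ/\vert G \vert$.

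The equivalence $(3) \Leftrightarrow (4)$ is essentially formal. In positive degrees $H^* \to \widehat{H}^*$ is an isomorphism, so a generator of a cyclic summand of order $\vert G \vert$ in $H^n$ lifts to $\widehat{H}^n$; cup product with it realises a degree $n$ endomorphism of $\widehat{H}^*(G;\bbZ)$ that is an isomorphism on $\widehat{H}^0 \cong \bbZ/\vert G \vert$, hence on all of $\widehat{H}^*$ by compatibility with cup products, so it is a unit. Conversely, a unit in $\widehat{H}^n(G;\bbZ)$ for $n > 0$ gives periodicity isomorphisms $\widehat{H}^* \cong \widehat{H}^{* + n}$; evaluating at $\ast = 0$ yields $\widehat{H}^n(G;\bbZ) \cong \bbZ/\vert G \vert$, and in positive degrees this agrees with ordinary cohomology.

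To close the circle, $(4) \Rightarrow (1)$: if $G$ contained a subgroup $C_p \times C_p$ for some prime $p$, then restriction would send the postulated unit in $\widehat{H}^*(G;\bbZ)$ to a unit in $\widehat{H}^*(C_p \times C_p;\bbZ)$. However the latter ring contains a polynomial subalgebra on two commuting generators in positive degree whose product is not invertible, so it has no unit in positive degree. Hence $G$ contains no such subgroup and every abelian subgroup is cyclic. The main obstacle I expect is the group-theoretic classification of $p$-groups with only cyclic abelian subgroups used in $(1) \Rightarrow (2)$, which is standard but not short; everything else is either a direct computation or a formal manipulation of cup products and Tate cohomology.
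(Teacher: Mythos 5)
The paper does not prove this theorem; it simply cites it to Cartan--Eilenberg as a classical result of Artin and Tate, so there is no argument in the paper to compare against. Evaluating your proof on its own merits: the overall circle $(1)\Leftrightarrow(2)\Rightarrow(3)\Leftrightarrow(4)\Rightarrow(1)$ is the standard one, and $(1)\Leftrightarrow(2)$, $(4)\Rightarrow(3)$ and $(4)\Rightarrow(1)$ are correct in substance (for $(4)\Rightarrow(1)$ the cleanest phrasing is that if $C_p\times C_p\leq G$ then $\widehat H^*(C_p\times C_p;\bbZ)$ is not periodic because the $\bbF_p$-ranks of $\widehat H^n$ grow without bound, so it cannot receive a restricted unit of nonzero degree). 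In $(2)\Rightarrow(3)$ the phrase ``via the transfer'' is not quite right --- the mechanism is that restriction identifies $H^*(G;\bbZ)_{(p)}$ with the conjugation-stable elements of $H^*(P;\bbZ)$, and one must replace the naive periodicity generator of a cyclic or quaternion Sylow subgroup by a suitable power to make it stable --- but the plan is correct.

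The genuine gap is in $(3)\Rightarrow(4)$. You assert that cup product with a generator $u$ of $H^n(G;\bbZ)\cong\bbZ/\lvert G\rvert$ is an isomorphism on $\widehat H^0$ and that this ``hence'' extends to an isomorphism on all of $\widehat H^*$ by compatibility with cup products. That inference is false for graded-commutative rings in general: in $R^*=\bbF_2[x]/(x^2)$ with $\lvert x\rvert=1$, multiplication by $x$ is an isomorphism $R^0\to R^1$ but is zero on $R^1$. Knowing that $\cdot u$ is an isomorphism in one degree tells you nothing about the other degrees, because composing gives multiplication by $u^2$, not $u$, and you do not yet know $u$ is invertible. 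The missing ingredient is Tate duality: the cup product pairing $\widehat H^n(G;\bbZ)\times\widehat H^{-n}(G;\bbZ)\to\widehat H^0(G;\bbZ)\cong\bbZ/\lvert G\rvert$ is a perfect pairing of finite abelian groups, so if $u$ generates $\widehat H^n\cong\bbZ/\lvert G\rvert$ there exists $v\in\widehat H^{-n}$ with $u\cup v=1$, exhibiting $u$ as a unit directly. (Alternatively one can reduce to Sylow subgroups and argue there, but that reduction also needs the fact that restriction of a generator remains a generator of a cyclic group of the correct order, which takes comparable work.) With Tate duality inserted, the proof goes through.
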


The set of $n \in \bbZ$ for which there is a unit in $\widehat{H}^n(G;\bbZ)$ is a subgroup, and so generated by a unique positive integer $p$, if $G$ is periodic. This integer $p$ is called the \textit{period} of $G$, and for all multiples of $p$ we have that $\widehat{H}^{kp}(G;\bbZ) \cong \bbZ/\vert G \vert$. The units in $\widehat{H}^{kp}(G;\bbZ)$ are exactly those elements generating it as a cyclic group. The geometric relevance of periodic groups is that if there is a isovariant semifree $G$-Poincar\'e space $X$ for which $X^G$ and $X$ are not equivalent, then $G$ must be periodic.

\subsection{Semifree $G$-spectra}\label{sec:semifree_g_spectra}

Let $G$ be a periodic finite group. 
We would like to study the relation of stable and unstable normal bundles of semifree $G$-Poincar\'e spaces. For this we introduce a custom-made category of $G$-spectra - the category $\spectra_G^{\mathrm{sf}}$ of \textit{semifree $G$-spectra} - which enjoys two desirable properties. First, they form a symmetric monoidal category whose invertible objects are easy to compare to $\Pic(\spectra_G^\omega)^\free$. Second, using equivariant versions of the Blakers-Massey theorem, it is easy to relate maps in $\spectra_G^{\mathrm{sf}}$ to maps between free $G$-spaces.
The construction is a special case of the more general \cref{sec:spectra_specified_isotropy}, and we use them to prove \cref{thm:destabilisations_of_free_hreps}. 

Denote by $\spc_G^\semifree \subseteq \spc_G$ the full subcategory generated by the orbits $G/G$ and $G/e$ under colimits.
The category $\spectra^\semifree_G$ is defined by formally inverting all semifree pointed $G$-homotopy representation in $\spc_{G, *}^\semifree$.
Let us just list the main properties of the category $\spectra^\semifree_G$ that we will need and refer to \cref{sec:spectra_specified_isotropy} for a formal definition and proofs.
We fix a generalised free $G$-homotopy representation $W$ and set $V = S^1 \join W$. 
\begin{enumerate}
    \item There is a symmetric monoidal colimit preserving functor $\Sigma^\infty \colon \spc_{G, *}^\semifree \to \spectra_G^\semifree$ that sends $V$ to a $\otimes$-invertible object.
    \item $\spectra_G^\semifree$ is a stable presentable category and the two orbits $\Sigma^\infty G/e_+$ and $\Sigma^\infty G/G_+$ form a family of compact self-dual generators.
    Consequently, the genuine fixed points 
    \begin{equation*}
        (-)^H \colon \spectra_G^\semifree \to \spectra, \quad X \mapsto \mapsp_{\spectra_G^\semifree}(\Sigma^\infty G/H_+, X)
    \end{equation*}
    for $H = e, G$ are jointly conservative.
    \item There is a symmetric monoidal colimit preserving functor $\spectra_G^\semifree \to \spectra_G$ fitting into a commutative square
    \begin{equation*}
    \begin{tikzcd}
        \spc_{G,*}^\semifree \ar[r, hook] \ar[d, "\Sigma^\infty"]
        & \spc_{G,*} \ar[d, "\Sigma^\infty"]
        \\
        \spectra_G^\semifree \ar[r]
        & \spectra_G.
    \end{tikzcd}
    \end{equation*}
    The geometric fixed points $\Phi^G \colon \spectra_G^\semifree \to \spectra$ are defined as the composite $\spectra_G^\semifree \to \spectra_G \xrightarrow{\Phi^G} \spectra$.
    \item For $Y, Z\in \spc_{G,*}^\semifree$ such that $Y$ is compact the map
    \begin{equation*}
        \colim_n \map_{\spc_{G,*}^\semifree}(V^{\wedge n} \wedge Y, V^{\wedge n} \wedge Z) \xrightarrow{\simeq }\map_{\spectra_G^\semifree}(\Sigma^\infty Y, \Sigma^\infty Z)
    \end{equation*}
    is an equivalence.
\end{enumerate}

The next result allows us to express the mapping spaces appearing in \cref{thm:destabilisations_of_free_hreps} in terms of semifree $G$-spectra.
We again write $(\spectra_G^\mathrm{sf})^\omega / e$ for the Verdier quotient by the thick subcategory of $(\spectra_G^\mathrm{sf})^\omega$ generated by $\Sigma^\infty G/e_+$.

\begin{lem}\label{lem:pullback_semifree}
    In the following diagram, all squares are cartesian.
    \begin{equation}
        \begin{tikzcd}
            (\spectra_G^{\mathrm{sf}})^\omega \ar[r] \ar[d] & (\spectra_G^\mathrm{sf})^\omega / e \ar[d]\\
            (\spectra_G)^\omega \ar[r] \ar[d] &(\spectra_G)^\omega/e \ar[d] \\
            (\spectra^\omega)^{BG} \ar[r]& (\spectra^\omega)^{BG}/e
        \end{tikzcd}
    \end{equation}
\end{lem}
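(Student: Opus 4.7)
The bottom square is the fracture square for compact genuine $G$-spectra (whose Picard-space version is stated as \cite[Thm.~4.16]{krause2020picard}; the categorical statement at the level of compact objects follows from the standard isotropy-separation recollement restricted to compacts). By the pasting lemma for cartesian squares, the top square is cartesian if and only if the outer rectangle obtained by composing the two columns is cartesian. I will prove the top square directly, so the outer rectangle then follows.

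The key claim is that the comparison functor $F \colon \spectra_G^\semifree \to \spectra_G$ from property (3) restricts to an equivalence between the thick subcategories generated by $\Sigma^\infty G/e_+$ on both sides. Indeed, $F$ sends $\Sigma^\infty G/e_+$ to $\Sigma^\infty G/e_+$, and by property (4) the endomorphism ring spectrum in $\spectra_G^\semifree$ is computed as $\colim_n \map_{\spc_{G,*}}(V^{\wedge n} \wedge G/e_+, V^{\wedge n} \wedge G/e_+)$. Using the untwisting isomorphism $V \wedge G/e_+ \simeq V^{\mathrm{triv}} \wedge G/e_+$ of pointed $G$-spaces (with $V^{\mathrm{triv}}$ denoting $V$ with trivial $G$-action), this colimit reduces to stabilisation of $\Sigma^\infty_+ G$ by trivial representation spheres, giving $\sphere[G]$. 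This matches the analogous endomorphism ring spectrum of $\Sigma^\infty G/e_+$ in $\spectra_G$. Both thick subcategories are therefore equivalent to perfect $\sphere[G]$-modules, which are identified with $(\spectra^\omega)^{BG}$.

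Assembling this into a map of Verdier sequences
\[
\begin{tikzcd}
(\spectra^\omega)^{BG} \ar[r, hook] \ar[d, "\simeq"'] & (\spectra_G^\semifree)^\omega \ar[r] \ar[d, "F"] & (\spectra_G^\semifree)^\omega/e \ar[d] \\
(\spectra^\omega)^{BG} \ar[r, hook] & (\spectra_G)^\omega \ar[r] & (\spectra_G)^\omega/e
\end{tikzcd}
\]
in which the leftmost vertical map is an equivalence, I then invoke that Verdier sequences of idempotent-complete stable $\infty$-categories are simultaneously fibre and cofibre sequences in the stable $\infty$-category $\cat^{\mathrm{perf}}_\infty$. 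Since the map on fibres is an equivalence, a standard three-by-three argument yields that the induced map $\fib(F) \to \fib(\bar F)$ on fibres of the two right vertical arrows is itself an equivalence; this is precisely the statement that the right-hand square is cartesian in $\cat^{\mathrm{perf}}_\infty$, and hence also in $\cat_\infty$ since the inclusion preserves limits. The main obstacle is the identification of the kernels of the two Verdier quotients, which ultimately depends on the concrete mapping space computation above, and hence on the specific construction of $\spectra_G^\semifree$ developed in \cref{sec:spectra_specified_isotropy}.
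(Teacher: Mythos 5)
Your argument matches the paper's computational core (the endomorphism-ring computation via property (4) and the identification of both thick subcategories with $(\spectra^\omega)^{BG}$), but the final deduction from ``equivalence on kernels'' to ``cartesian right square'' is false, and this is where the paper's use of \cite[Lem.~3.9]{krause2020picard} does genuine work that you are skipping. Two specific problems. First, $\cat^{\mathrm{perf}}_\infty$ is \emph{not} a stable $\infty$-category: the suspension of any object is the zero category (the pushout of $0 \leftarrow \mathcal{A} \to 0$), so $\Sigma$ is not invertible, and neither the three-by-three lemma nor the ``cartesian iff the map on vertical fibres is an equivalence'' characterisation is available. Second, and independently of any foundational technicality, the claim ``a map of Verdier sequences whose map on kernels is an equivalence has a cartesian right square'' is simply wrong. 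Take $\mathcal{B}_1 = \mathcal{B}_2 = \spectra^\omega \times \spectra^\omega$, both projecting via the second factor to $\mathcal{C}_1 = \mathcal{C}_2 = \spectra^\omega$ with kernel $\mathcal{A}_1 = \mathcal{A}_2 = \spectra^\omega$ (the first factor), and let $\phi \colon \mathcal{B}_1 \to \mathcal{B}_2$ be $(X,Y) \mapsto (X\oplus Y, Y)$, the identity on $\mathcal{C}_i$. This is an exact map of Verdier sequences restricting to the identity on the kernels, yet $\phi$ is not essentially surjective, and the pullback $\mathcal{B}_2 \times_{\mathcal{C}_2} \mathcal{C}_1$ is just $\mathcal{B}_2$, so the right square is not cartesian.

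What the counterexample illustrates is exactly what \cite[Lem.~3.9]{krause2020picard} demands and your proof does not supply: a bilateral mapping-space condition for $X$ in the kernel and $Y$ \emph{arbitrary} compact, not just $Y$ also in the kernel. In the counterexample $\mapsp_{\mathcal{B}_1}((A,0),(B,C)) = \mapsp(A,B)$ while $\mapsp_{\mathcal{B}_2}(\phi(A,0),\phi(B,C)) = \mapsp(A, B\oplus C)$, which differ precisely when $\mapsp(A,C) \neq 0$, i.e., exactly when $Y=(B,C)$ is allowed to have a component outside the kernel. The paper's proof verifies, for $X = \Sigma^\infty G/e_+$ and arbitrary $Y = \Sigma^\infty Z$ with $Z$ a compact semifree $G$-space, that $\mapsp_{\spectra_G^\semifree}(X,Y) \to \mapsp_{\spectra^{BG}}(X^e, Y^e)$ is an equivalence using the colimit formula from property (4), adds the dual statement by self-duality of $\Sigma^\infty G/e_+$, and separately checks retract-closure of the image of the kernel via the left adjoint of restriction. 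Your untwisting computation of $\mathrm{End}(\Sigma^\infty G/e_+) \simeq \sphere[G]$ is correct and is a nice way to see the kernel identification, but to complete the proof you would need to upgrade it to the general-$Y$ mapping-space statement, which is effectively what the paper does.
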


\begin{proof}
    It is shown in \cite[Thm. 3.10]{krause2020picard} that the lower square is cartesian, as a consequence of \cite[Lem. 3.9]{krause2020picard}, and we use the same proof for the outer rectangle.
    This forces the upper square to be cartesian as well.
    
    We again apply \cite[Lem. 3.9]{krause2020picard}.
    First, we show that if $X,Y \in (\spectra_G^\semifree)^\omega$ are such that $X$ is in the thick subcategory generated by $\Sigma^\infty_+ G/e$, then the maps
    \[ \mapsp_{\spectra_G^{\semifree}}(X,Y) \rightarrow \mapsp_{\spectra^{BG}}(X^{e},Y^{e}) \hspace{3mm} \text{and} \hspace{3mm} \mapsp_{\spectra_G^{\semifree}}(Y,X) \rightarrow \mapsp_{\spectra^{BG}}(Y^{e},X^{e})  \]
    are equivalences. Since both $X$ and $Y$ are dualisable, and since $X^\vee$ again lies in the thick subcategory generated by $\Sigma^\infty_+ G/e$ (a consequence of it being self-dual) it suffices to show that the first map is an equivalence.
    Since the statement is stable under colimits and shifts in $Y$ and under finite colimits, shifts and retracts in $X$, it suffices to prove the statement for $X = \Sigma^\infty G/e_+$ and $Y = \Sigma^\infty Z$ for some $Z \in \spc^\semifree_{G,*}$.
    In this case, we use the explicit description of mapping spaces
    \begin{align*}
        \map_{\spectra_G^\semifree}(\Sigma^\infty G/e_+ \wedge S^k, \Sigma^\infty Z) & \simeq \colim_n \map^G_*(G/e_+ \wedge S^k \wedge V^{\wedge n}, Z \wedge V^{\wedge n}) \\
        & \simeq \colim_n \map_*(S^k \wedge (V^e)^{\wedge n}, Z^e \wedge (V^e)^{\wedge n})
        \\
        & \simeq \map_{\spectra}(S^k, Z) 
        \simeq \map_{\spectra^{BG}}(\Sigma^\infty G/e_+ \wedge S^k, Z).
    \end{align*}
    
    It remains to check the second condition of \cite[Lem. 3.9]{krause2020picard} saying that the image of $\langle \Sigma^\infty G/e_+ \rangle \subseteq (\spectra_G^{\mathrm{sf}})^\omega$ in $(\spectra^\omega)^{BG}$ is closed under retracts.
    The restriction functor $\spectra_G^{\mathrm{sf}} \to \spectra^{BG}$ admits a left adjoint sending the generator $\Sigma^\infty G/e_+$ of $\spectra^{BG}$ to $\Sigma^\infty G/e_+ \in \spectra_G^\mathrm{sf}$. 
    It thus maps the thick subcategory of $(\spectra^{\omega})^{BG}$ generated by $\Sigma^\infty G/e_+$ inside $\langle \Sigma^\infty G/e_+ \rangle \subseteq (\spectra_G^{\mathrm{sf}})^\omega$, completing the proof.
\end{proof}

\begin{lem}\label{lem:semifree_mod_e}
    The functor $\Phi^G \colon \spectra_G^\semifree \to \spectra$ induces an equivalence $(\spectra_G^{\mathrm{sf}})^\omega /e \simeq \spectra^\omega$.
\end{lem}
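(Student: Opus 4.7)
The plan is to show that $\Phi^G$ factors through the Verdier quotient by the thick subcategory $\langle \Sigma^\infty G/e_+ \rangle$ and induces an equivalence, by checking essential surjectivity directly and establishing a Bousfield localization structure to verify full faithfulness. First, I would observe that $\Phi^G(\Sigma^\infty G/e_+) = 0$: using property (3) to factor $\Phi^G$ through $\spectra_G$, the classical geometric fixed points compute $\Sigma^\infty (G/e)^G_+ = \Sigma^\infty(\{*\}) = 0$, since $(G/e)^G = \emptyset$. As $\Phi^G$ is exact, it annihilates $\langle \Sigma^\infty G/e_+ \rangle$, and the universal property of the Verdier quotient yields an induced exact functor
\[
\bar\Phi^G \colon (\spectra_G^{\semifree})^\omega/e \to \spectra^\omega.
\]

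Essential surjectivity follows from the generator structure. By property (2), $(\spectra_G^{\semifree})^\omega$ is generated by $\Sigma^\infty G/e_+$ and $\Sigma^\infty G/G_+$ as a thick idempotent-complete subcategory. After quotienting, $(\spectra_G^{\semifree})^\omega/e$ is generated by the image $[\Sigma^\infty G/G_+]$ alone, which $\bar\Phi^G$ sends to $\Phi^G(\Sigma^\infty G/G_+) = \sphere$, the generator of $\spectra^\omega$.

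For full faithfulness, my plan is to show that $\Phi^G \colon \spectra_G^{\semifree} \to \spectra$ is a Bousfield localization with localizing kernel $\mathrm{Loc}(\Sigma^\infty G/e_+) \subseteq \spectra_G^{\semifree}$. Once this is established, Neeman's theorem identifying Verdier quotients of compact objects with the compacts of Bousfield localizations immediately gives the desired equivalence on quotients. This would follow from constructing a fully faithful right adjoint $\Phi^G_* \colon \spectra \to \spectra_G^{\semifree}$ to $\Phi^G$ (which exists since $\Phi^G$ preserves colimits by property (3)), together with the identification $\ker(\Phi^G) = \mathrm{Loc}(\Sigma^\infty G/e_+)$, obtained by testing vanishing against the jointly conservative fixed-point functors $(-)^e, (-)^G$ from property (2).

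I expect the hardest step to be verifying that $\Phi^G_*$ is fully faithful, equivalently that the counit $\Phi^G \Phi^G_* \to \mathrm{id}_{\spectra}$ is an equivalence. The plan is to compare with the analogous fully faithful right adjoint in $\spectra_G$, namely the inclusion of $\widetilde{EG}$-local genuine $G$-spectra, and to argue via property (3) that this restricts to a fully faithful inclusion into $\spectra_G^{\semifree}$. This compatibility will rely on the detailed construction of $\spectra_G^{\semifree}$ deferred to \cref{sec:spectra_specified_isotropy}, together with the explicit stable mapping space formula of property (4).
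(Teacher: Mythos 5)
Your proposal takes a genuinely different route from the paper: you aim to exhibit $\Phi^G \colon \spectra_G^{\mathrm{sf}} \to \spectra$ as a Bousfield localization with localizing kernel $\mathrm{Loc}(\Sigma^\infty G/e_+)$ and then invoke Neeman's theorem on compact objects, whereas the paper directly computes the mapping spaces in the Verdier quotient via the filtered-colimit formula and identifies the result using \cref{lem:geoemtric_fixed_points_detection}. Your setup and essential-surjectivity step are fine, and the overall strategy is sound in principle, but there is a genuine gap in the hardest step, exactly where you flag it. The plan to transport full faithfulness of the right adjoint from $\spectra_G$ via property (3) does not go through: the colimit-preserving functor $\spectra_G^{\mathrm{sf}} \to \spectra_G$ is generally \emph{not} fully faithful (the semifree category only sees the orbits $G/e$ and $G/G$, while $\spectra_G$ sees intermediate orbits, and mapping spectra out of $\Sigma^\infty_+ G/G$ differ), so the $\widetilde{E\proper}$-local inclusion into $\spectra_G$ cannot simply be ``restricted'' to land in $\spectra_G^{\mathrm{sf}}$ with its full faithfulness intact. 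What one needs instead is to check directly that the counit $\Phi^G R \to \mathrm{id}$ is an equivalence at $\sphere$, and the crucial input for that is precisely the detection lemma (\cref{lem:geoemtric_fixed_points_detection}) asserting that $X^e \simeq 0$ forces $X^G \simeq \Phi^G X$; without this your argument stalls. In this sense, carrying out your route to completion would rediscover the same key lemma the paper uses, wrapped in heavier machinery.

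Two smaller points worth noting. First, Neeman's theorem only identifies the Verdier quotient of compacts with a dense subcategory of $(\T/\mathcal{S})^\omega$ a priori; you need to observe that $\spectra^\omega$ is already generated as a triangulated (not merely thick) subcategory by $\sphere$, so that a fully faithful exact functor from the quotient hitting $\sphere$ is automatically essentially surjective. This is true but should be said. Second, the identification $\ker(\Phi^G) = \mathrm{Loc}(\Sigma^\infty G/e_+)$ that you assert via joint conservativity of $(-)^e, (-)^G$ requires justification: joint conservativity gives you that $X \simeq 0$ iff $X^e \simeq X^G \simeq 0$, but the relation between $\Phi^G X \simeq 0$ and membership in $\mathrm{Loc}(\Sigma^\infty G/e_+)$ is exactly the content of the Bousfield localization you are trying to establish, so as written this is circular without the detection lemma as input.
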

\begin{proof}
    Since $\Phi^G$ is clearly essentially surjective and sends $\Sigma^\infty_+ G/e$ to 0, we only have to show that the induced map $(\spectra_G^{\mathrm{sf}})^\omega/e \to \spectra^\omega$ is fully faithful on the generator $\Sigma^\infty_+ G/G$ of $(\spectra_G^{\mathrm{sf}})^\omega /e$.
    The argument for this is similar to the proof of \cite[Lemma 3.7]{krause2020picard}.
    For $X \in (\spectra^\semifree_G)^\omega$ we compute
    \begin{equation*}
        \mapsp_{(\spectra_G^{\mathrm{sf}})^\omega/e}(\Sigma^\infty G/G_+, X) \simeq \colim_{X \xrightarrow{\sim} X'} \mapsp_{\spectra_G^{\mathrm{sf}}}(\Sigma^\infty G/G_+, X') 
        \simeq (\colim_{X \xrightarrow{\sim} X'} X')^G 
    \end{equation*}
    where the colimit runs over the filtered diagram of all maps $X \to X'$ in $(\spectra^\semifree_G)^\omega$ with cofibre in $\langle G/e \rangle$.
    The same argument as in \cite[Lemma 3.7]{krause2020picard} shows that
    \begin{equation*}
        (\colim_{X \xrightarrow{\sim} X'} X')^e \simeq \mapsp_{\spectra_G^{\mathrm{sf}}}(\Sigma^\infty G/e_+, \colim_{X \xrightarrow{\sim} X'} X') \simeq 0.
    \end{equation*}
    Finally, we can apply \cref{lem:geoemtric_fixed_points_detection} to show
    \begin{equation*}
        (\colim_{X \xrightarrow{\sim} X'} X')^G \simeq \Phi^G (\colim_{X \xrightarrow{\sim} X'} X')
        \simeq \colim_{X \xrightarrow{\sim} X'} \Phi^G X' \simeq \Phi^G X
    \end{equation*}
    using that $\Phi^G X \to \Phi^G X'$ is an equivalence as the cofibre in $\langle G/e \rangle$ has trivial geometric fixed points.
    This completes the proof.
\end{proof}

\subsection{Join stabilisation}

In this section, we study the effect of join-stabilisation on free generalised homotopy representations. Let us first record some elementary facts on joins of objects in a category before specialising to the situation of $G$-spaces of interest.

\begin{recollect}[Joins and slices]
    Suppose that $\sC$ is a category which admits finite limits, finite colimits. 
    The join of two objects $x, y \in \sC$ is defined as $x \join y \coloneqq x \sqcup_{x \times y} y$ and promotes to a functor $- \join s \colon \sC \to \sC_{s/}$.
    If $\sC$ is cartesian closed, with internal hom denoted by $\hom(-,-)$, then  the functor $- \join s$ admits a right adjoint 
    \begin{equation*}
        (-) \join s \colon \sC \rightleftarrows \sC_{s/} \cocolon{\hom_{s/}(*,-)}.
    \end{equation*}
    To see this and give an explicit description, consider the following diagram whose squares are cartesian.
    \begin{equation*}
        \begin{tikzcd}
            \map_{s/}(x \join s,y) \ar[r] \ar[d] & \map(x\join s,y) \ar[r] \ar[d] & \map(x,y) \ar[d] \\
            * \ar[r] & \map(s,y) \ar[r] & \map(x \times s,y) \simeq \map(x,\hom(s,y)).
        \end{tikzcd}
    \end{equation*}
    So indeed, if we set $\hom_{s/}(*,y) \simeq \fib(y \rightarrow \hom(s,y))$ then $\map_{s/}(x \join s,y) \simeq \map(x, \hom_{s/}(*,y))$.
\end{recollect}

We are interested in the connectivity of the map $\map(x,y) \rightarrow \map_{s/}(x\join s,y\join s)$ in the case $\sC = \spc_G$.
It turns out to be easier to instead study the adjunction unit $y \rightarrow \hom_{s/}(*,y\join s)$. 
Let us start with the following nonequivariant result.

\begin{lem}
    \label{lem:compute_connectivity_of_join_unit}
    Let $X$ be a $k$-connected space with $k \geq 0$ and $m\geq 0$. Then the adjunction unit
    \[ X \rightarrow \hom_{S^m/}(*,X \join S^m) \]
    is $2k+1$-connected.
\end{lem}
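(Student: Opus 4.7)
The plan is to identify the adjunction unit $X \to \hom_{S^m/}(*, X \star S^m)$, under natural equivalences, with the iterated Freudenthal suspension map $X \to \Omega^{m+1} \Sigma^{m+1} X$, and then invoke the classical Freudenthal theorem. Since $X$ is $k$-connected with $k \geq 0$, it is path-connected, so I fix a basepoint $x_0 \in X$. Associativity of the join combined with the identification $\Sigma_\mathrm{unred} Y \simeq Y \star S^0$ gives $X \star S^m \simeq \Sigma^{m+1}_\mathrm{unred} X$, and for the path-connected space $X$ this unreduced suspension is equivalent to the reduced suspension $\Sigma^{m+1} X$.

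Next I would unpack $\hom_{S^m/}(*, Y)$ for an $S^m$-pointed space $(Y, \alpha)$ as the space of disk fillings of $\alpha$: a morphism from $(S^m \to *)$ to $(S^m \xrightarrow{\alpha} Y)$ in $\spc_{S^m/}$ is a point $y \in Y$ together with a homotopy from $\mathrm{const}_y$ to $\alpha$ in $\hom(S^m, Y)$, and such a homotopy is the same datum as a map $D^{m+1} \to Y$ restricting to $\alpha$ on the boundary. For $Y = X \star S^m$ with its canonical structure map, the subspace $\{x_0\} \star S^m \simeq C S^m \subset X \star S^m$ provides a preferred such filling, exhibiting $\hom_{S^m/}(*, X \star S^m)$ as nonempty and as a torsor over $\Omega^{m+1}(X \star S^m)$. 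Taking the canonical filling through $x_0$ as basepoint yields $\hom_{S^m/}(*, X \star S^m) \simeq \Omega^{m+1}(X \star S^m) \simeq \Omega^{m+1} \Sigma^{m+1} X$.

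Under these identifications, the unit sends $x \in X$ to the disk filling obtained by coning $\alpha$ to $x$; tracing through the equivalences shows that this is exactly the Freudenthal map $X \to \Omega^{m+1} \Sigma^{m+1} X$ adjoint to $\mathrm{id}_{\Sigma^{m+1} X}$. Verifying this coincidence of units while carefully tracking basepoints through the chain of equivalences is the main technical step. Granting it, the conclusion follows by iterating Freudenthal: the composite $X \to \Omega \Sigma X \to \Omega^2 \Sigma^2 X \to \cdots \to \Omega^{m+1} \Sigma^{m+1} X$ is $(2k+1)$-connected, since the first map is $(2k+1)$-connected by the classical Freudenthal theorem and each further step $\Omega^i \Sigma^i X \to \Omega^{i+1} \Sigma^{i+1} X$ is at least $(2k+i+1)$-connected, being obtained by looping the Freudenthal map for the $(k+i)$-connected space $\Sigma^i X$.
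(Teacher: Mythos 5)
Your proposal is correct and follows essentially the same route as the paper: both identify the adjunction unit with the Freudenthal map $X \to \Omega^{m+1}\Sigma^{m+1}X$ by observing that the structure map $S^m \to X\join S^m$ is nullhomotopic, so that $\hom_{S^m/}(*,X\join S^m) \simeq \Omega^{m+1}(X\join S^m) \simeq \Omega^{m+1}\Sigma^{m+1}X$, and then invoke Freudenthal. The only cosmetic difference is that you spell out the iterated connectivity estimate for the composite $X \to \Omega\Sigma X \to \cdots \to \Omega^{m+1}\Sigma^{m+1}X$, whereas the paper just cites the usual statement; both are valid.
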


\begin{proof}
    Assume that $S^m \rightarrow Z$ is constant at $z \in Z$. Then we get a commutative diagram
    \begin{equation}
        \begin{tikzcd}
            \hom_{S^m/}(*,Z) \ar[r] \ar[d] & \map(S^{k+1},Z) \ar[r] \ar[d] & Z \ar[d] \\
            * \ar[r, "z"] & Z \ar[r] & \map(S^k, Z)
        \end{tikzcd}
    \end{equation}
    in which the right and outer rectangle are cartesian.
    Thus, the left square is cartesian which, exhibits an equivalence $\hom_{S^m/}(*,Z) \simeq \Omega^{m+1} Z$. Now if $X$ is nonempty, $S^m \rightarrow X \join S^m$ is nullhomotopic, and a choice of nullhomotopy induces an equivalence $\hom_{S^m/}(*,X \join S^m) \simeq \Omega^{m+1} X \join S^m$. Furthermore, a choice of basepoint provides an identification $X \join S^m \simeq \Sigma^{m+1} X$. Under these identifications, the map
    \[ X \rightarrow \hom_{S^m/}(*,X \join S^m) \simeq \Omega^{m+1} \Sigma^{m+1} X \]
    becomes the usual map, which is $2k+1$-connected.
\end{proof}

Our next goal is to consider the following situation: $X$ is a $d$-dimensional free $G$-homotopy representation, and we want to estimate the connectivity of the map
\[ \map^G(X,X) \rightarrow \map^G_{S^m/}(X \join S^m, X \join S^m). \]
To do so, we recall the following result.
\begin{lem}
    \label{lem:connectivity_on_mapping_spaces}
    Let $f \colon Y \rightarrow Z$ be a map of $G$-spaces, which is $c^e$-connected on underlying spaces and $c^G$-connected on fixed points.
    \begin{enumerate}
        \item For a $G$-CW pair $(X,A)$ so that $(X,A)$ has a free $d^e$-dimensional $G$-CW structure, the map
        \[ \map^G_{A/}(X,Y) \rightarrow \map^G_{A/}(X,Z) \]
        is $d^e-c^e$-connected.
        \item For a semifree $G$-space $X$ the map
        \[ \map^G(X,Y) \rightarrow \map^G(X,Z) \]
        is $\min \{d^e-c^e, d^G-c^G\}$-connected if $X^G$ has a $d^G$-dimensional CW-structure and $(X, X^G)$ has a relative $d^e$-dimensional CW-structure.
    \end{enumerate}
\end{lem}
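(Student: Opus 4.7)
The plan is a standard obstruction-theoretic induction on the given $G$-CW structure, whose key reduction is that mapping out of a free $G$-cell is purely nonequivariant via the adjunction $\map^G(G \times K, Y) \simeq \map(K, Y^e)$, and mapping out of a fixed cell $G/G \times D^n$ only sees the fixed-point map $Y^G \to Z^G$.

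For part (1), I induct on the cells of the relative free $G$-CW structure of $(X, A)$, ordered by dimension. The base case $X = A$ is trivial. In the inductive step, write $X = X' \cup_{G \times S^{n-1}} (G \times D^n)$ for a subcomplex $X' \supseteq A$ and a single free $G$-cell of dimension $n \leq d^e$. Mapping relative to $A$ into $Y$ yields a pullback square
\[
\begin{tikzcd}
\map^G_{A/}(X, Y) \ar[r] \ar[d] & \map(D^n, Y^e) \ar[d] \\
\map^G_{A/}(X', Y) \ar[r] & \map(S^{n-1}, Y^e),
\end{tikzcd}
\]
and analogously for $Z$. Comparing the two pullback squares, the connectivity change across a single free $n$-cell is controlled by the map $\map(D^n, Y^e) \to \map(D^n, Z^e)$ over $\map(S^{n-1}, -^e)$, which by standard obstruction theory (the fibre of $f^e$ is $(c^e-1)$-connected) contributes the expected amount in terms of $n$ and $c^e$. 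Iterating over the cells of dimension $\leq d^e$ yields the stated bound $d^e - c^e$.

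For part (2), I split the semifree CW structure of $X$ into two stages: first attach fixed cells $G/G \times D^n$ for $n \leq d^G$ to build $X^G$, then attach free $G$-cells $G \times D^n$ for $n \leq d^e$ relative to $X^G$ to build $X$. Restriction along $X^G \hookrightarrow X$ produces a fibre sequence
\[
\map^G_{X^G/}(X, Y) \to \map^G(X, Y) \to \map^G(X^G, Y),
\]
and the analogous one for $Z$. The base term only sees the map $f^G \colon Y^G \to Z^G$ on fixed points, which is $c^G$-connected; applying the cellular argument of (1) with $G$ trivial to the $d^G$-dimensional CW complex $X^G$ shows that the base map is $d^G - c^G$-connected. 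The fibre term is exactly the situation of part (1) applied to the pair $(X, X^G)$, and hence is $d^e - c^e$-connected. Combining via the long exact sequences of homotopy groups of the two fibre sequences of mapping spaces gives the advertised bound $\min\{d^e - c^e, d^G - c^G\}$.

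I do not expect any genuine obstacle: this is a textbook obstruction-theoretic induction, and the only care needed is to correctly identify the contribution of each type of cell through the adjunctions converting maps out of free and fixed cells into nonequivariant mapping spaces out of discs.
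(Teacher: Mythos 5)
Your argument is correct and is precisely the ``elementary equivariant obstruction theory'' that the paper's proof invokes in a single sentence without elaboration. The cell-by-cell induction using the adjunction $\map^G(G\times K,Y)\simeq\map(K,Y^e)$ for free cells, together with the fibre-sequence splitting $\map^G_{X^G/}(X,Y)\to\map^G(X,Y)\to\map(X^G,Y^G)$ and the five-lemma combination for part (2), is exactly the standard argument the authors have in mind.
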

\begin{proof}
    This follows from elementary equivariant obstruction theory. 
\end{proof}

\begin{cor}\label{cor:join_stabilisation_connectivity}
    Let $X$ be a free $G$-CW complex of dimension $d$ such that $X^{e}$ is $k$-connected. 
    Then the map
    \[ \map^G(X,X) \rightarrow \map^G_{S^m/}(X \join S^m, X \join S^m) \]
    is $2k+1-d$-connected.
\end{cor}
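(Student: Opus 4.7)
The plan is to apply the join--slice adjunction $(-) \join S^m \dashv \hom_{S^m/}(*, -)$ inside the cartesian closed category $\spc_G$ (with trivial $G$-action on $S^m$) in order to rewrite the target mapping space as a mapping space out of $X$, and then transfer the connectivity estimate for the adjunction unit provided by \cref{lem:compute_connectivity_of_join_unit} through the equivariant obstruction-theoretic \cref{lem:connectivity_on_mapping_spaces}.

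Concretely, first I would use the adjunction to produce a natural equivalence
\[ \map^G_{S^m/}(X \join S^m, X \join S^m) \simeq \map^G(X, \hom_{S^m/}(*, X \join S^m)). \]
Under this equivalence the stabilisation map $f \mapsto f \join \id_{S^m}$ is identified with post-composition by the adjunction unit $\eta \colon X \to \hom_{S^m/}(*, X \join S^m)$: indeed $\id_{X \join S^m}$ transposes across to $\eta$, and naturality then forces $f \join \id_{S^m}$ to transpose to $\eta \circ f$.

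Next I would estimate the connectivity of $\eta$. Because the join, the internal hom, and the fibre defining $\hom_{S^m/}$ are all preserved by the forgetful functor $\spc_G \to \spc$, the underlying map of $\eta$ is literally the nonequivariant unit for the space $X^e$. Thus by \cref{lem:compute_connectivity_of_join_unit} it is $(2k+1)$-connected. Since $X$ is free, $X^G = \emptyset$, so the fixed-point connectivity condition in \cref{lem:connectivity_on_mapping_spaces} is vacuous.

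To conclude, I would apply \cref{lem:connectivity_on_mapping_spaces}(1) to the pair $(X, \varnothing)$ and the map $\eta$: since $(X,\varnothing)$ carries a free $G$-CW structure of dimension $d$ and $\eta$ is $(2k+1)$-connected on underlying spaces, post-composition with $\eta$ induces a $(2k+1)-d$-connected map on $\map^G(X,-)$, which is exactly the desired conclusion. The only nonformal step is the identification of the underlying map of the equivariant unit with the nonequivariant unit, and this is straightforward; I do not foresee any genuine obstacle, as the corollary is essentially a combination of the previous two lemmas via the join--slice adjunction.
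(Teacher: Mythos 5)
Your proof is correct and takes essentially the same route as the paper: rewrite the target via the join--slice adjunction, use \cref{lem:compute_connectivity_of_join_unit} for the connectivity of the unit on underlying spaces, and transfer through \cref{lem:connectivity_on_mapping_spaces}. The paper's only additional remark is that one may assume $k \ge 0$ (so that the hypothesis of \cref{lem:compute_connectivity_of_join_unit} is met), the statement being void otherwise.
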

\begin{proof}
    We can assume that $k \ge 0$ as the statement is void otherwise.
    The map in question identifies with the map $\map^G(X,X) \rightarrow \map^G(X, \hom_{S^m/}(*, X \join S^m))$. It is $2k+1-d$-connected by \cref{lem:connectivity_on_mapping_spaces}, using that the map $X \rightarrow \hom_{S^m/}(*,X \join S^m)$ is $2k+1$-connected as seen in \cref{lem:compute_connectivity_of_join_unit}.
\end{proof}

\subsection{Automorphisms versus stable automorphisms of representation spheres}
\label{subsec:automorphisms_versus_stable_automorphisms}
Let $G$ be a finite group and consider a generalised homotopy representation $X \in \spc_{G,*}^\omega$ of the form $X \simeq X^G \join V$ for a free generalised homotopy representation $V$.
Note that $(X,X^G)$ admits a relative CW-structure of dimension $d^{e}$, where $d^e$ is the dimension of the underlying sphere of $X$, since $V$ admits a $G$-CW structure whose dimension agrees with the dimension of $V$ as a sphere. We need a specific adaptation of the equivariant Freudenthal suspension theorem to the world of generalised homotopy representations and semifree $G$-spaces. 
The original proof of the equivariant Freudenthal suspension theorem in \cite{Hauschild} can be modified to yield the following lemma.

\begin{lem}
    \label{lem:semi_free_freudenthal}
    Let $Y$ be a pointed $G$-space and denote by $c^{e}$ and $c^G$ denote the connectivity of $Y^{e}$ and $Y^G$, respectively. Then the adjunction unit map of $G$-spaces
    \[ Y \rightarrow \map_*(X,X \wedge Y) \]
    is $2c^{e}+1$-connected on underlying spaces and $\min\{ 2c^G +1, c^{e}  \}$ connected on fixed points.
\end{lem}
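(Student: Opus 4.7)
The plan is to handle the two connectivity bounds separately: the bound on underlying spaces is essentially classical, while the bound on fixed points requires a decomposition respecting the semifreeness of $X$.

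For the underlying space bound, since $X$ is a generalised homotopy representation, $X^e$ is homotopy equivalent to a sphere $S^{d^e}$ of some dimension $d^e$. Under this identification the underlying map becomes the classical unit $Y^e \to \map_*(S^{d^e}, S^{d^e} \wedge Y^e) \simeq \Omega^{d^e}\Sigma^{d^e} Y^e$, which is $(2c^e + 1)$-connected by the classical Freudenthal suspension theorem.

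For the fixed-point bound, I would apply $\map^G_*(-, X \wedge Y)$ to the cofiber sequence $X^G \hookrightarrow X \to X/X^G$ to obtain the fiber sequence
\[ \map^G_*(X/X^G, X \wedge Y) \to \map^G_*(X, X \wedge Y) \to \map^G_*(X^G, X \wedge Y). \]
A direct verification gives $(X \wedge Y)^G \simeq X^G \wedge Y^G$ for any pointed $G$-spaces, so the right term identifies with $\map_*(X^G, X^G \wedge Y^G) \simeq \Omega^{d^G}\Sigma^{d^G} Y^G$. The composite $Y^G \to \map^G_*(X, X \wedge Y) \to \Omega^{d^G}\Sigma^{d^G} Y^G$ is then precisely the nonequivariant Freudenthal unit for the sphere $X^G \simeq S^{d^G}$ applied to $Y^G$, hence $(2c^G + 1)$-connected by classical Freudenthal.

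To conclude, I would bound the connectivity of the fiber $\map^G_*(X/X^G, X \wedge Y)$ from below and then combine. Semifreeness of $X$ implies that $X/X^G$ has trivial $G$-fixed points, so it admits a pointed $G$-CW structure with only free cells, of dimension at most $d^e$. Since $\map^G_*(G_+ \wedge S^k, W) \simeq \Omega^k W^e$ for a free pointed $G$-cell, equivariant obstruction theory over this cell structure bounds the fiber's connectivity from below by (connectivity of $W^e$) $- d^e$ for $W = X \wedge Y$. As $X^e \simeq S^{d^e}$ and $Y^e$ is $c^e$-connected, $W^e$ is $(d^e + c^e)$-connected, giving fiber connectivity $c^e$. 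A standard long exact sequence comparison of the fibers of $Y^G \to \map^G_*(X, X \wedge Y)$ and of $Y^G \to \Omega^{d^G}\Sigma^{d^G} Y^G$ via the fiber sequence above then yields the claimed bound $\min(2c^G + 1, c^e)$. The main technical step is the obstruction-theoretic estimate for $\map^G_*(X/X^G, X \wedge Y)$, where semifreeness enters essentially through the freeness of the cells of $X/X^G$ — without it, non-free cells would contribute mapping spaces with worse connectivity and spoil the bound.
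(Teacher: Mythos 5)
Your argument is correct, and it rests on the same two pillars as the paper's: classical Freudenthal for the part controlled by $X^G$, and an obstruction-theoretic connectivity estimate over the free relative cells of $(X, X^G)$. The organization differs modestly. The paper uses the decomposition $X \simeq X^G \wedge V'$ (with $V' = S^0 \join V$) to build an explicit factorization
\[ Y^G \to \map_*(X^G, X^G \wedge Y^G) \xrightarrow{\,-\wedge V'\,} \map_*^G(X, X \wedge Y), \]
shows the first arrow is $(2c^G{+}1)$-connected by nonequivariant Freudenthal, and controls the second arrow by noting it is a section of the restriction map $(-)^G$ whose fiber $\map_{X^G/}^G(X, X \wedge Y)$ is highly connected by \cref{lem:connectivity_on_mapping_spaces}. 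You instead apply $\map^G_*(-, X\wedge Y)$ to the cofiber sequence $X^G \to X \to X/X^G$, identify the base $\map^G_*(X^G, X\wedge Y) \simeq \map_*(X^G, X^G \wedge Y^G) \simeq \Omega^{d^G}\Sigma^{d^G}Y^G$ using $(X\wedge Y)^G \simeq X^G \wedge Y^G$, observe that the composite down to the base is the classical unit, bound the fiber $\map^G_*(X/X^G, X\wedge Y)$ as you describe, and finish with a long exact sequence comparison. This is an equivalent argument in slightly different clothing: your route avoids any explicit appeal to the smash decomposition $X \simeq X^G \wedge V'$ and its section/retraction structure, requiring only that $X/X^G$ admit a free $G$-CW structure of dimension $d^e$, which is a marginal simplification; the paper's route makes the split piece $\map_*(X^G, X^G \wedge Y^G)$ more visible as a retract, which is arguably easier to follow when tracking base points. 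Two small bookkeeping remarks: the connectivity of $(X\wedge Y)^e \simeq \Sigma^{d^e}Y^e$ is $d^e + c^e$ as you compute (the paper's $d^e + c^e + 1$ is an off-by-one that does not affect the conclusion), and your final long exact sequence step (fiber of $gf$ maps to fiber of $g$ with fiber the fiber of $f$) does give exactly $\min\{2c^G+1, c^e\}$ once written out, so the vagueness there is harmless.
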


\begin{proof}
    On underlying spaces, the map in question identifies with the adjunction unit $Y \rightarrow \Omega^{d^{e}} \Sigma^{d^{e}} Y$, which is $2c^{e}+1$-connected by the Freudenthal suspension theorem. 
    For the statement about fixed points, rewrite $X = X^G \join V \simeq X^G \join V'$ with $V' = S^0 \join V$ and consider the composition
    \begin{equation}\label{eq:equivariant_freudenthal}
        Y^G \rightarrow \map_*(X^G, X^G \wedge Y^G) \xrightarrow{- \wedge V'} \map_*^G(X,X\wedge Y).
    \end{equation}
    The first map is yet again $2c^{G}+1$-connected. 
    The second map has a section by passing to fixed points, whose fibre is the space $\map_{X^G/}^G(X, X \wedge Y)$.
    The pair $(X, X^{G})$ admits a free $G$-CW-structure of dimension $d^e$ and $(X \wedge Y)^{e}$ is $d^e + c^e + 1$-connected, so $\map^G_{X^G/}(X, X \wedge Y)$ is $c^e + 1$-connected by \cref{lem:connectivity_on_mapping_spaces}. 
    In particular, the map $(-)^G \colon \map_*^G(X,X\wedge Y) \to \map_*(X^G, X^G \wedge Y^G)$ is an isomorphism on homotopy groups in degrees at most $c^e+1$, which implies that the right map in \cref{eq:equivariant_freudenthal} is $c^e$-connected.
    Together, we get that the map $Y^G \to \map_*^G(X,X\wedge Y)$ is $\min\{ 2c^{G} +1, c^{e}  \}$-connected.
\end{proof}

\begin{cor}
    \label{cor:connectivity_of_stabilisation}
    Let $G$ be a periodic group, and $Y,Z$ pointed semifree $G$-spaces such that $Y$ is compact. Write $d^{G}$ for the cellular dimension of $Y^G$, $d^{e}$ for the relative cellular dimension of $(Y,Y^G)$, $c^{G}$ for the connectivity of $Z^{G}$ and $c^{e}$ for the connectivity of $Z^{e}$.
    Then the map
    \[ \map^G_*(Y,Z) \rightarrow \map_{\spectra_G^\mathrm{sf}}(\Sigma^\infty Y, \Sigma^\infty Z) \]
    is $\min\{2c^{e}+1-d^{e},\min\{ 2c^{G}+1,c^{e}\} -d^G \}$ -connected.
\end{cor}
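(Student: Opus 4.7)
The strategy is to invoke property (4) of the construction of semifree $G$-spectra, which expresses the target as the filtered colimit
\[
\map_{\spectra_G^\semifree}(\Sigma^\infty Y, \Sigma^\infty Z) \simeq \colim_n \map^G_*(V^{\wedge n} \wedge Y, V^{\wedge n} \wedge Z)
\]
along join stabilisation maps. It then suffices to show that each stabilisation map in this sequence is at least $k$-connected for $k = \min\{2c^e + 1 - d^e,\ \min\{2c^G + 1, c^e\} - d^G\}$, since the composite $\map^G_*(Y, Z) \to \colim_n \map^G_*(V^{\wedge n} \wedge Y, V^{\wedge n} \wedge Z)$ then inherits the same bound. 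The first step (corresponding to $n = 0$) will meet the bound exactly, and later steps will be strictly more connected.

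The $n$-th stabilisation map is, by the $V \wedge - \dashv \map_*(V, -)$ adjunction, induced by postcomposition with the unit $V^{\wedge n} \wedge Z \to \map_*(V, V^{\wedge(n+1)} \wedge Z)$. Writing $V = S^1 \join W$ gives $V^G \simeq S^1$ and $V \simeq V^G \join W$, which is the shape required in \cref{lem:semi_free_freudenthal}. Applying that lemma with ``$Y$'' replaced by $V^{\wedge n} \wedge Z$, whose underlying space is $n(m+1) + c^e$-connected and whose fixed points $S^n \wedge Z^G$ are $n + c^G$-connected (with $m + 1 = \dim V^e$), the unit is $\bigl(2n(m+1) + 2c^e + 1\bigr)$-connected on underlying spaces and $\min\{2n + 2c^G + 1,\ n(m+1) + c^e\}$-connected on fixed points.

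To turn this into a connectivity estimate for the induced map on equivariant mapping spaces, I apply \cref{lem:connectivity_on_mapping_spaces}(2) with source the compact semifree $G$-space $V^{\wedge n} \wedge Y$. Its fixed points $S^n \wedge Y^G$ have CW-dimension $n + d^G$, and the relative pair has CW-dimension $n(m+1) + \max(d^e, d^G)$ by an analysis of the smash product CW-structure. Choosing the free generalised $G$-homotopy representation $W$ with $m \geq \max(0,\ d^G - d^e - 1)$, which is allowed for $G$ periodic and does not affect $\spectra_G^\semifree$ because the defining localisation inverts all such spheres, this becomes $n(m+1) + d^e$. Assembling the two ingredients, the $n$-th stabilisation has connectivity at least
\[
\min\bigl\{n(m+1) + 2c^e + 1 - d^e,\ \min\{n + 2c^G + 1 - d^G,\ nm + c^e - d^G\}\bigr\}.
\]
This equals $k$ at $n = 0$, and each of the three terms grows by at least $1$ as $n$ increases, so $k$ is a uniform lower bound on the connectivities of all stabilisation maps.

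The main obstacle is the bookkeeping of the semifree CW-structure on $V^{\wedge n} \wedge Y$: the non-fixed cells of $V^{\wedge n}$ smashed with the fixed cells of $Y$ contribute relative cells of dimension $n(m+1) + d^G$, which only fits within the cleaner bound $n(m+1) + d^e$ after $W$ is taken sufficiently high-dimensional. The remainder of the proof is a routine combination of the equivariant Freudenthal suspension \cref{lem:semi_free_freudenthal} and the obstruction-theoretic estimate \cref{lem:connectivity_on_mapping_spaces}.
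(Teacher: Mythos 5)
Your approach is correct in substance but takes a genuinely different route from the paper's, and the paper's route is noticeably cleaner. You analyze the individual bonding maps
\[
\map^G_*(V^{\wedge n}\wedge Y, V^{\wedge n}\wedge Z) \to \map^G_*(V^{\wedge(n+1)}\wedge Y, V^{\wedge(n+1)}\wedge Z)
\]
in the sequential colimit, which forces you to track the (semifree, relative) CW-dimension of the growing source $V^{\wedge n}\wedge Y$ and to re-choose the free homotopy representation $W$ to have large enough dimension. The paper instead applies the $V^{\wedge n}\wedge(-)\dashv\map_*(V^{\wedge n},-)$ adjunction once and for all, rewriting the colimit as $\colim_n\map^G_*(Y,\map_*(V^{\wedge n},V^{\wedge n}\wedge Z))$, and then directly estimates the connectivity of each map $\map^G_*(Y,Z)\to\map^G_*(Y,\map_*(V^{\wedge n},V^{\wedge n}\wedge Z))$. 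Since the source $Y$ is held fixed, the dimensions $d^e,d^G$ never change, \cref{lem:semi_free_freudenthal} applied with $X=V^{\wedge n}$ gives a bound that is \emph{uniform in $n$}, and the whole CW-structure analysis of $V^{\wedge n}\wedge Y$ and the choice of $W$ disappear. The uniform bound $\min\{2c^e+1-d^e, \min\{2c^G+1,c^e\}-d^G\}$ then falls out of a single application of \cref{lem:connectivity_on_mapping_spaces} and \cref{lem:semi_free_freudenthal}.

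Two bookkeeping corrections to your version: (i) with $W^e\simeq S^m$ and $V=S^1\join W$ one has $\dim V^e=m+2$, not $m+1$ (the join of an $a$-sphere and a $b$-sphere is an $(a+b+1)$-sphere), though since you carry this through consistently your conclusion is unaffected; (ii) the relative cellular dimension of $(V^{\wedge n}\wedge Y, S^n\wedge Y^G)$ is $n(m+2)+\max(d^e,d^G)$ for $n\geq1$, and this does \emph{not} ``become $n(m+1)+d^e$'' after enlarging $m$ --- the excess $\max(0,d^G-d^e)$ is independent of $m$. What the choice of a large $W$ actually buys you is that the growth of $n(m+2)$ compensates for this excess starting at $n=1$, so that the connectivity of the $n$-th bonding map never dips below the $n=0$ value. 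Your final inequality $m\geq\max(0,d^G-d^e-1)$ (which is $m\geq\max(0,d^G-d^e-2)$ in the corrected convention) ensures exactly this, so the argument closes, but the phrasing of the reduction is misleading. You should also note explicitly that property (4) of $\spectra_G^{\mathrm{sf}}$ holds for \emph{any} isotropy dualising sphere $V$, which is what licenses picking $W$ of large dimension.
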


\begin{proof}
    We use the colimit description
    \begin{align*}
        \map_{\spectra^{\mathrm{sf}}_G}(\Sigma^\infty Y ,\Sigma^\infty Z) \simeq \colim_n \map_*^{G}(Y,\map_*(V^{\wedge n},V^{\wedge n}\wedge Z)),
    \end{align*}
    and the map in question is the inclusion of the first component of the filtered colimit diagram.
    The map $\map_*^{G}(Y,Z)  \rightarrow \map_*^{G}(Y,\map_*(V^{\wedge n},V^{\wedge n}\wedge Z))$ is $\min\{2c^{e}+1-d^{e},\min\{ 2c^{G}+1,c^{e}\} -d^G \}$-connected by combining \cref{lem:connectivity_on_mapping_spaces} and \cref{lem:semi_free_freudenthal}.
    This proves the claim.
\end{proof}

\begin{prop}
    \label{prop:connectivity_range_for_automorphisms}
    Let $X$ be a free generalised homotopy representation of the group $G$ of dimension $r \geq 1$. Then the map
    \begin{equation}
        \aut^G(X) \rightarrow \fib(\aut_{\spectra_G^\mathrm{sf}}(\Sigma^\infty_J X) \xrightarrow{\Phi^{G}} \aut_{\spectra_G^{\semifree}/e}(\sphere))
    \end{equation}
    is $r-1$-connected. 
\end{prop}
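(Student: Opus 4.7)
The plan is to factor the comparison map as
\[ \aut^G(X) \xrightarrow{-\join S^0} \aut^G_{S^0/}(X \join S^0) \xrightarrow{\Sigma^\infty} \fib\bigl(\Phi^G \colon \aut_{\spectra_G^\mathrm{sf}}(\Sigma^\infty_J X) \to \aut_{\spectra_G^\mathrm{sf}/e}(\sphere)\bigr), \]
where the first arrow joins a self-equivalence $f$ of $X$ with the identity on $S^0$ and the second is stabilization. The composite lands in the fiber over the identity because $f \join \id_{S^0}$ fixes $(X \join S^0)^G = S^0$ pointwise, so after applying $\Phi^G$, which in the semifree category is computed via the fixed-point subspace by \cref{lem:semifree_mod_e}, the resulting self-map of $\sphere \simeq \Phi^G \Sigma^\infty_J X$ is canonically the identity.

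The first factor is $r-1$-connected directly by \cref{cor:join_stabilisation_connectivity}: taking $m = 0$, $d = r$, and $k = r - 1$ (since $X^e \simeq S^r$ is $(r-1)$-connected) gives connectivity $2(r-1) + 1 - r = r - 1$, matching the claimed range.

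For the second factor we claim it is an equivalence. Applying the cartesian square of \cref{lem:pullback_semifree} at $\Sigma^\infty_J X$ and using \cref{lem:semifree_mod_e} to identify $\aut_{(\spectra^\mathrm{sf}_G)/e}(\sphere) \simeq \aut_\spectra(\sphere)$, we can rewrite
\[ \fib(\Phi^G) \simeq \fib\bigl(\aut_{\spectra^{BG}}(S^{r+1}) \to \aut_{(\spectra^{BG})/e}(S^{r+1})\bigr), \]
where $S^{r+1} \simeq (\Sigma^\infty_J X)^e$ carries the Borel $G$-action inherited from the free action on $X^e$. This fiber can be matched with the unstable under-$S^0$ structure by exploiting that the cofiber sequence $\Sigma^\infty X_+ \to \sphere \to \Sigma^\infty_J X$ becomes a trivialisation $\Sigma^\infty_J X \simeq \sphere$ in $(\spectra^{BG})/e$ (since $X$ is free), so an under-$S^0$ self-equivalence of $X \join S^0$ corresponds precisely to a Borel-equivariant self-equivalence of $S^{r+1}$ together with a trivialisation in the quotient. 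The main obstacle is this last identification: the first factor is essentially automatic from \cref{cor:join_stabilisation_connectivity}, whereas the matching of unstable under-$S^0$ data with the stable Borel fiber requires careful bookkeeping of the cofiber sequence in $\spectra^{BG}$ and its interaction with the pullback square of \cref{lem:pullback_semifree}.
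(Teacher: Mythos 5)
Your first factor is correct and is in fact the same estimate the paper uses: by \cref{cor:join_stabilisation_connectivity} with $m=0$, $d=r$ and $k=r-1$, the join map $\aut^G(X) \to \aut^G_{S^0/}(X \join S^0)$ is $(r-1)$-connected. Your reduction of $\fib(\Phi^G)$ via \cref{lem:pullback_semifree} and \cref{lem:semifree_mod_e} to $\fib\bigl(\aut_{\spectra^{BG}}(S^{r+1}) \to \aut_{(\spectra^{BG})/e}(S^{r+1})\bigr)$ is also correct.

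The gap is the assertion that the second factor is an equivalence; this is false, not merely an unfinished bookkeeping exercise. The source $\aut^G_{S^0/}(X \join S^0)$ is an \emph{unstable} mapping space: under the semifree presentation of $\spc_G^{\semifree}$, it is built from $\aut(S^{r+1})^{hG}$, whose higher homotopy is governed by the unstable homotopy groups $\pi_*\Omega^{r+1}S^{r+1}$. The target is \emph{stable}: $\aut_{\spectra^{BG}}(S^{r+1}) \simeq GL_1(\sphere^{hG})$ is governed by the stable stems. These agree only through a Freudenthal range, roughly up to degree $2r$. Quantitatively, applying \cref{cor:connectivity_of_stabilisation} with $Y=Z=X \join S^0$ (so $c^G=-1$, $d^G=0$, $c^e=r$, $d^e=r+1$) shows that the stabilisation map $\aut^G_*(X \join S^0)\to\aut_{\spectra_G^{\semifree}}(\Sigma^\infty_J X)$ is only $\min\{r,-1\}=-1$-connected, because the fixed-point contribution $\min\{2c^G+1,c^e\}-d^G=-1$ is terrible when the fixed points are just $S^0$. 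So the $l=0$ comparison square is far from cartesian, and no amount of careful bookkeeping of the cofibre sequence will turn the second factor into an equivalence.

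The paper closes exactly this gap by joining with $S^l$ for arbitrary $l\geq 0$, not just $l=0$. The decisive observation is a dichotomy: the join-stabilisation $\aut^G(X) \to \aut^G_{S^l/}(X \join S^l)$ remains $(r-1)$-connected \emph{independently of $l$} by \cref{cor:join_stabilisation_connectivity}, whereas the unstable-to-stable comparison $\aut^G_*(X \join S^l) \to \aut_{\spectra_G^{\semifree}}(\Sigma^\infty(X \join S^l))$ becomes arbitrarily highly connected as $l \to \infty$ by \cref{cor:connectivity_of_stabilisation}, since joining with $S^l$ raises both $c^G$ and $c^e$. Passing to the limit in $l$ then combines these two estimates in a $3\times 2$ diagram of automorphism spaces whose outer square is $(r-1)$-cartesian. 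This ``suspend first, then compare stably'' step is the essential idea your proposal lacks.
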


\begin{proof}
    First, let $l \geq 0$ be an integer. 
    Consider the following diagram.
    \begin{equation}
    \label{diag:stabilisations}
        \begin{tikzcd}
            \aut^{G}(X) \ar[r] \ar[d] & * \ar[d] \\
            \aut^{G}_*(X \join S^l) \ar[r] \ar[d, "f"] & \aut_*(S^l) \ar[d, "g"]\\
            \aut_{\spectra_{G}^{\mathrm{sf}}}(\Sigma^\infty X \join S^l) \ar[r] & \aut_{\spectra}(\Sigma^\infty S^l)
        \end{tikzcd}
    \end{equation}
    Note that $X$ is a free $G$-space admitting a $r$-dimensional cell structure such that $X^e$ is $r-1$-connected. Hence the map
    \[ \aut^G(X) \rightarrow \aut^G_{S^l/}(X \join S^l) \]
    is $2(r-1)+1-r = r-1$-connected by \cref{cor:join_stabilisation_connectivity}. 
    In other words, the upper square in \cref{diag:stabilisations} is $r-1$-cartesian, which is notably independent of $l$.
    The connectivity of the lower right vertical map is linear in $l$. By \cref{cor:connectivity_of_stabilisation}, the connectivity of the the lower left vertical map is, for $l$ large enough, the difference of the connectivity of $X^e\join S^l$ and the cellular dimension of  $S^l$. Hence, the map is $r-l -l = r$-connected. 
    This implies that the lower square is $r-1$-connected. Hence, the outer square is $r-1$-connected as well, being a composite of two $r-1$-connected squares.
    Together with the equivalence $\Phi^G \colon \spectra_G^{\semifree}/e \xrightarrow{\simeq} \spectra$ from  \cref{lem:semifree_mod_e} this proves the claim.
\end{proof}

\begin{proof}[Proof of \cref{thm:destabilisations_of_free_hreps}]
    That the map in question is a bijection on path components is the result of \cref{cor:classification_of_free_hreps_up_to_equivalence}. Given $X \in \hrepfree{G}$ of dimension $k-1$, 
    we apply \cref{lem:pullback_semifree} to indentify the map in the statement of \cref{thm:destabilisations_of_free_hreps} with the map
    \begin{equation}
    \label{eq:connectivity_of_map_on_deloopings}
        B\aut^G(X) \rightarrow \fib(B\aut_{\spectra_G^\mathrm{sf}}(\Sigma^\infty X_J) \xrightarrow{\Phi^{G}} B\aut_{\spectra_G^{\semifree}/e}(\sphere))
    \end{equation}
    obtained by applying the delooping functor $B$ to the map in \cref{prop:connectivity_range_for_automorphisms} and putting $r=k-1$. The delooping functor increases connectivity by $1$, hence the connectivity of \cref{eq:connectivity_of_map_on_deloopings} is $1 + k-2 = k-1$. 
\end{proof}

\section{Constructing complements}\label{sec:existence_isovariant_structures}

In this section we come to the obstruction theoretic part in the proof of our main result about connectivity of the space of isovariant structures on a semifree Poincar\'e space.

\subsection{Complement problems}

Given a destabilisation of the stable normal bundle of the fixed points $X^{C_p} \rightarrow X$, that is a diagram
\begin{equation}
    \begin{tikzcd} 
        \partial C \ar[d, "p"]& \\
        X^{C_p} \ar[r, "\epsilon"] & X
    \end{tikzcd}
\end{equation}
where $p$ is a free spherical fibration stabilising to the stable normal bundle $\nu_X$, we want to complete it to a pushout diagram by finding a suitable \textit{complement} for the embedding filling the upper right corner.
This leads us to the following notation.
\begin{nota}
    A \textit{complement problem} in a category $\sC$ consists of two composable maps as in the left diagram below.
    \begin{equation*}
        \begin{tikzcd}
            x \ar[d, "f"] &    &&  x \ar[r, "g'"] \ar[d, "f"] \ar[dr, phantom, very near end, "\ulcorner"] & x' \ar[d, "f'"] \\
            y \ar[r, "g"] & y' && y \ar[r, "g"] & y'
        \end{tikzcd}
    \end{equation*}
    Given a pushout as on the right in the above diagram, its underlying complement problem is defined to be the diagram on the left, and refer to it as a solution of the complement problem on the left. Note that this depends on the orientation of the pushout, and whenever we write a pushout we choose the down-right direction for its underlying complement problem.
\end{nota}

Before coming to the main result, let us recall some preliminaries needed in the proof.

\begin{recollect}[The Blakers-Massey theorem]
    \label{recollect:blakers_massey}
    Consider a cocartesian square of spaces.
    \begin{equation*}
        \begin{tikzcd}
            A \ar[r, "f"] \ar[d, "g"] & B \ar[d, "g'"]\\
            C \ar[r,"f'"] & D
        \end{tikzcd}
    \end{equation*}
    Then:
    \begin{enumerate}
        \item If $f$ is $n$-connected and $g$ is $m$-connected, then the map $A \rightarrow B \times_C D$ induced by the square is $n+m$-connected;
        \item if $f$ is 2-connected and $g'$ is $n$-connected, then $g$ is $n$-connected.
    \end{enumerate}
\end{recollect}

\begin{recollect}[2-out-of-3 for pushouts]
    \label{recollect:2_out_of_3_for_pushouts}
    Consider a diagram of spaces as follows.
    \begin{equation}
        \begin{tikzcd}
            X \ar[r] \ar[d] & Y \ar[r] \ar[d] & Z \ar[d] \\
            X' \ar[r] & Y' \ar[r] & Z'
        \end{tikzcd}
    \end{equation}
    Then the following 2-out-of-3 properties hold.
    \begin{enumerate}
        \item If the left and right squares are pushouts, then so is the outer rectangle.
        \item If the outer rectangle and the left square are pushouts, then so is the right square.
        \item If the outer rectangle and the right square are pushouts, and moreover the map $Y \rightarrow Z$ induces an equivalence on fundamental groupoids, then also the left square is a pushout.
    \end{enumerate}
\end{recollect}

\begin{lem}
\label{lem:solving_complement_problem_equivariantly_if_given_nonequivariant_solution}
    Consider a complement problem of $G$-spaces as depicted on the left below, and a solution to that complement problem on $H$-fixed points as on the right below.
    \begin{equation}
    \label{eq:complement_problem_and_noneq_solution}
        \begin{tikzcd}
            W \ar[d, "f"] &    &&  W^H \ar[r, "s"] \ar[d, "f^H"] \ar[dr, phantom, very near end, "\ulcorner"] & U \ar[d, "t"] \\
            Y \ar[r, "g"] & Z && Y^H \ar[r, "g^H"] & Z^H
        \end{tikzcd}
    \end{equation}
    Assume that $Z$ is obtained from $Y$ by attaching multiple $k+1$-cells
    along a map $q \colon \coprod_I G/H \times S^k \rightarrow Y$. Assume further that $k \leq \conn(g^H) + \conn(f^H)$, that $\conn(f^H) \geq 2$ and that $t$ is a surjection on path components. Then the equivariant complement problem in \cref{eq:complement_problem_and_noneq_solution} admits a solution, giving the right pushout in \cref{eq:complement_problem_and_noneq_solution} on $H$-fixed points.
\end{lem}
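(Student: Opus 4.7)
The plan is to construct $C$ by lifting the attaching map $q$ along $f$ and then attaching the same equivariant $(k{+}1)$-cells to $W$. By the adjunction $G\text{-}\map(G/H \times S^k, W) \simeq \map(S^k, W^H)$, building a $G$-equivariant lift $\tilde q$ of $q$ along $f$ is equivalent to lifting each underlying map $q_i \colon S^k \to Y^H$ ($i \in I$) along $f^H$. Granted such lifts, I would set $C \coloneqq W \cup_{\tilde q} \coprod_I G/H \times D^{k+1}$, and the pasting law for pushouts (\cref{recollect:2_out_of_3_for_pushouts}) produces a pushout square $(W \to C, Y \to Z)$ solving the original complement problem.

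The central task is to produce the lifts $\tilde q_i$, which is where Blakers-Massey enters. Applied to the given pushout on $H$-fixed points: since $f^H$ is $2$-connected by hypothesis, part (2) of \cref{recollect:blakers_massey} upgrades the connectivity to $\conn(s) \geq \conn(g^H)$, and then part (1) shows that the comparison map $W^H \to Y^H \times_{Z^H} U$ is $(\conn(f^H)+\conn(g^H))$-connected. Now for each $i$, the composite $g^H \circ q_i$ is null-homotopic in $Z^H$ via the extension over the attached $(k{+}1)$-cell. Using that $t$ is surjective on $\pi_0$, I pick a basepoint $u_i \in U$ mapping into the same component of $Z^H$ as the image of that cell; the constant map at $u_i$, together with a homotopy in $Z^H$ between $g^H \circ q_i$ and the constant at $t(u_i)$ (which exists since both are null and land in the same component), assembles into a map $S^k \to Y^H \times_{Z^H} U$. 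By the connectivity just established and the hypothesis $k \leq \conn(f^H)+\conn(g^H)$, this map lifts to the desired $\tilde q_i \colon S^k \to W^H$ with $f^H \tilde q_i \simeq q_i$.

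It remains to verify compatibility on $H$-fixed points. Since equivariant cell attachments yield cofibrations, $C^H \simeq W^H \cup_{\tilde q^H} \coprod_I (G/H)^H \times D^{k+1}$. The lifts were chosen so that $s \circ \tilde q^H$ is null-homotopic in $U$, which means it extends over the cells into $U$; the pushout universal property then produces a canonical map $C^H \to U$ over $Z^H$ and under $W^H$. Comparing cofibres of $W^H \to C^H$ and $W^H \to U$ via the pushouts they sit in (both identify with $\cofib(g^H)$) shows this map is an equivalence. The main obstacle is the Blakers-Massey calculation, specifically the bootstrap $\conn(s) \geq \conn(g^H)$ enabled by $\conn(f^H) \geq 2$; the range $k \leq \conn(f^H) + \conn(g^H)$ is then exactly sharp for the resulting lift.
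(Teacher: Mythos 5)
Your approach mirrors the paper's: you lift the attaching maps along $f^H$ using Blakers--Massey, making the bootstrap $\conn(s)\ge\conn(g^H)$ via part~(2) explicit, which is a nice touch since the paper is terse there, and then attach equivariant cells. The lifting step is handled correctly and corresponds to the paper's choice of $r'$ and the subsequent lift to $Y^H\times_{Z^H} U$. The gap lies in the final verification. You argue that comparing cofibres of $W^H\to C^H$ and $W^H\to U$, both ``identifying with $\cofib(g^H)$'', shows the induced map $C^H\to U$ is an equivalence. This is not valid on its own: a map that induces an equivalence on cofibres need not be an equivalence in unpointed (or pointed non-simply-connected) spaces. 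This is exactly the content of part~(3) of \cref{recollect:2_out_of_3_for_pushouts}, which the paper invokes at this step and whose hypothesis that $f^H$ induces an equivalence of fundamental groupoids --- supplied here by $\conn(f^H)\ge2$ --- is what makes the ``same cofibre, hence same space'' reasoning legitimate. You use $\conn(f^H)\ge2$ only for the Blakers--Massey bootstrap and omit its second, equally essential appearance in the verification step; as written, the claimed equivalence is not justified and should be replaced by an application of the 2-out-of-3 lemma.

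A secondary point to be careful about: you assert that ``the lifts were chosen so that $s\circ\tilde q^H$ is null-homotopic in $U$''. The construction only arranges this on the $\{eH\}$-component of $(G/H)^H\times S^k$; for the remaining Weyl-group translates the nullhomotopy does not follow directly from the choice of $r'$, so the pushout universal property producing the map $C^H\to U$ needs additional justification. This is another reason to prefer the formal 2-out-of-3 argument over an explicit cell-by-cell construction of the comparison map.
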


\begin{proof}
    The composite $r \colon \coprod_I S^k \xrightarrow{q^H} Y^H \rightarrow Z^H$ is nullhomotopic since $Z$ is obtained by attaching cells along $q^H$. 
    We can pick a lift $r' \colon \coprod_I S^k \to U$ of $r$ along $t$ as $t$ is surjective on path components. 
    The Blakers-Massey theorem from \cref{recollect:blakers_massey} guarantees that the map $W^H \rightarrow Y^H \times_{Z^H} U$ is $k$-connected, and so we may in particular find a dashed lift $Q^H$ in the following diagram.
    \begin{equation*}      
    \begin{tikzcd}
    \coprod_I S^k \arrow[rdd, "q^H", bend right] \arrow[rrd, "{r'}", bend left] \arrow[rd, "Q^H", dashed] &                  \\      
    & W^H \arrow[d, "f^H"] \arrow[r, "s"] & U \arrow[d, "t"] \\
    & Y^H \arrow[r, "g^H"]                & Z^H             
    \end{tikzcd}
    \end{equation*}
    Now $Q^H$ corresponds to a map $Q \colon \coprod_I G/H \times S^k \rightarrow W$ lifting $q$. We may attach equivariant $k+1$-cells along $Q$ to construct a solution to the original complement problem as follows.
    \begin{equation*}
        \begin{tikzcd}
            W \ar[r] \ar[d] & X = W \coprod_{\coprod_I G/H \times S^k} \coprod_I G/H \ar[d]\\
            Y\ar[r] & Z
        \end{tikzcd}
    \end{equation*}
    To see that the induced map $X^H \rightarrow U$ is an equivalence, 
    apply the third point in \cref{recollect:2_out_of_3_for_pushouts} to the diagram
    \begin{equation*}
        \begin{tikzcd}
            W^H \ar[r, equal] \ar[d] &  W^H \ar[d, "s"] \ar[r, "f^H"] & Y^H \ar[d, "g^H"] \\
            X^H \ar[r] & U \ar[r, "t"] & Z^H.
        \end{tikzcd}
    \end{equation*}
    The map $f^H$ is $2$-connected by assumption and hence an equivalence on fundamental groupoids.
\end{proof}

\begin{obs}[Good cell structures]\label{obs:good_cell_structure}
    Consider a pair $(Z, Y)$ of $G$-spaces such that the map $Y^H \to Z^H$ is an equivalence for all subgroups $e \neq H \le G$.
    Then we can find a relative equivariant CW-structure $(Z_k)_{k}$ on $(Z,Y)$ consisting only of free cells such that the inclusion $Z_k^e \to Z^e$ of the $k$-skeleton is $k$-connected. 
\end{obs}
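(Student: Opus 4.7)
The plan is to apply equivariant CW-approximation in the free-cell setting, which is especially clean here because the hypothesis forces every non-free piece of isotropy in $Z$ to already lie in $Y$. I would construct the skeleta $Z_{-1} \subseteq Z_0 \subseteq Z_1 \subseteq \dots$ inductively, starting from $Z_{-1} = Y$, carrying along the inductive hypothesis that $Z_k$ is built from $Y$ by attaching free $G$-cells of dimension $\le k$, and that the underlying inclusion $Z_k^e \hookrightarrow Z^e$ is $k$-connected.

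Since only free cells are attached, one automatically has $Z_k^H = Y^H$ for each subgroup $e \neq H \le G$, and the assumption on $(Z,Y)$ identifies this with $Z^H$. So the fixed-point conditions come for free, and all connectivity control takes place on the underlying spaces.

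For the inductive step, I would choose representatives $\alpha_i \colon (D^{k+1}, S^k) \to (Z^e, Z_k^e)$ of a generating set of the relative homotopy group $\pi_{k+1}(Z^e, Z_k^e)$ (at $k=-1$, one map per missing $G$-orbit of components in $\pi_0 Z^e$). Each attaching map $S^k \to Z_k^e$ extends to a $G$-equivariant map $G \times S^k \to Z_k$ via $(g,x) \mapsto g \cdot \alpha_i(x)$, and the nullhomotopy of $\alpha_i|_{S^k}$ in $Z^e$ extends by the same formula to a $G$-equivariant nullhomotopy in $Z$. Attaching one free cell $G \times D^{k+1}$ along each of these would define $Z_{k+1}$ together with the required $G$-equivariant map $Z_{k+1} \to Z$ extending the one from $Z_k$.

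The final check that $Z_{k+1}^e \hookrightarrow Z^e$ is $(k+1)$-connected follows from the long exact sequence of the triple $Z_k^e \subseteq Z_{k+1}^e \subseteq Z^e$, combining the inductive $k$-connectedness of $Z_k^e \hookrightarrow Z^e$ with the observation that the attached cells kill a chosen generating set of $\pi_{k+1}(Z^e, Z_k^e)$. There is no serious obstacle in this argument; the only substantive point is the equivariant extension of attaching maps and nullhomotopies, which works precisely because the cells we add are free.
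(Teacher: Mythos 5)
Your proof is correct and fills in the standard argument that the paper leaves implicit (the statement is labelled an Observation and given without proof). The key structural point you identify — that attaching only free cells leaves $H$-fixed points untouched for $H \ne e$, so the hypothesis on $(Z,Y)$ reduces everything to the classical CW-approximation argument on underlying spaces — is exactly the content of the observation. The inductive step, including the long exact sequence of the triple $Z_k^e \subseteq Z_{k+1}^e \subseteq Z^e$, is the standard bookkeeping, and the free--forgetful adjunction $\map^G(G \times S^k, Z_k) \simeq \map(S^k, Z_k^e)$ is indeed what makes the equivariant extension of attaching maps and nullhomotopies automatic. Two minor points worth being slightly more careful about (neither is a real gap): the case $k = 0,1$ needs the usual low-degree care since $\pi_1(Z^e, Z_1^e)$ is not a group and $\pi_0$ has no group structure; and one should note at the end that the resulting filtered object has $\colim_k Z_k \simeq Z$ because the comparison map is an equivalence on all fixed points (an equivalence on $(-)^e$ by $\infty$-connectedness, and tautologically on $(-)^H$ for $H \neq e$).
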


\begin{lem}\label{thm:solution_equivariant_complement_problem}
    Consider a complement problem of $G$-spaces together with a nonequivariant solution
    \begin{equation}
    \label{eq:next_complement_problem}
        \begin{tikzcd}
            W \ar[d, "f"] &    &&  W^e \ar[r, "s"] \ar[d, "f^e"] \ar[dr, phantom, very near end, "\ulcorner"] & U \ar[d, "t"] \\
            Y \ar[r, "g"] & Z && Y^e \ar[r, "g^e"] & Z^e.
        \end{tikzcd}
    \end{equation}
    Assume the following:
    \begin{enumerate}
        \item $Y^H \to Z^H$ is an equivalence for all $e \neq H \le G$;
        \item $t$ is $0$-connected;
        \item $f^e$ is $2$-connected.
    \end{enumerate}
    Then the complement problem on the left side in \cref{eq:next_complement_problem} admits a solution, giving the right side on underlying spaces.
\end{lem}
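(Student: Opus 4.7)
The plan is to induct over a good relative equivariant CW-structure on $(Z,Y)$ and reduce each induction step to \cref{lem:solving_complement_problem_equivariantly_if_given_nonequivariant_solution}. By hypothesis (1), \cref{obs:good_cell_structure} applies, so fix a filtration $Y = Z_{-1} \subseteq Z_0 \subseteq Z_1 \subseteq \cdots \subseteq Z = \colim_k Z_k$ in which each $Z_k$ is obtained from $Z_{k-1}$ by attaching free $k$-cells. Set $U_k := W^e \cup_{Y^e} Z_k^e$ with its canonical map $t_k \colon U_k \to Z_k^e$; then $U \simeq \colim_k U_k$ and $t \simeq \colim_k t_k$, because taking $e$-fixed points of spaces commutes with pushouts and sequential colimits.

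The inductive claim is that one can construct a filtration $W = X_{-1} \subseteq X_0 \subseteq X_1 \subseteq \cdots$ of $G$-spaces with maps $X_k \to Z_k$ and compatible equivalences $X_k^e \simeq U_k$ over $Z_k^e$, such that each square $W \to X_k$, $Y \to Z_k$ is a $G$-pushout. The base case $X_{-1} := W$ is tautological. For the induction step, apply \cref{lem:solving_complement_problem_equivariantly_if_given_nonequivariant_solution} with $H = e$ to the complement problem $X_{k-1} \to Z_{k-1} \to Z_k$ and nonequivariant solution $U_k$, which one identifies with $X_{k-1}^e \cup_{Z_{k-1}^e} Z_k^e$ using the inductive hypothesis $X_{k-1}^e \simeq U_{k-1}$ together with \cref{recollect:2_out_of_3_for_pushouts}. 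The hypotheses of \cref{lem:solving_complement_problem_equivariantly_if_given_nonequivariant_solution} follow from the pushout formalism: the map $X_{k-1} \to Z_{k-1}$ is a pushout of $f$ and so has the same cofibre, hence its $e$-fixed points are $2$-connected; the cell-attachment $Z_{k-1} \to Z_k$ is $(k-1)$-connected, so the connectivity inequality reduces to $k-1 \le (k-1)+2$; and $t_k$ is a pushout of $f^e$, hence $2$-connected and in particular surjective on $\pi_0$. Passing to the colimit $X := \colim_k X_k$ then yields the required equivariant solution of the complement problem $W \to Y \to Z$, and $X^e = \colim_k X_k^e = \colim_k U_k = U$.

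The main technical point is simply to set up the induction so that the hypotheses of \cref{lem:solving_complement_problem_equivariantly_if_given_nonequivariant_solution} hold at every stage: this lemma handles only a single layer of cells of uniform dimension, while our cells can have varying dimensions. The reason the induction runs through painlessly is that both of its connectivity assumptions -- $2$-connectivity of the left-vertical map and $\pi_0$-surjectivity of the downward map from the nonequivariant solution -- are preserved under the pushouts that build $X_{k-1}$ from $W$ and $U_k$ from $U_{k-1}$, so the assumption $\conn(f^e) \ge 2$ is propagated automatically through every inductive step.
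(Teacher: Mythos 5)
Your overall plan — induct over a good equivariant skeletal filtration of $(Z,Y)$ and feed each skeleton into \cref{lem:solving_complement_problem_equivariantly_if_given_nonequivariant_solution} — is exactly the paper's. The part where your proof breaks down is the construction of the intermediate nonequivariant solutions $U_k$.

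First, the formula $U_k := W^e \cup_{Y^e} Z_k^e$ doesn't type-check: the only structure map involving $Y^e$ and $W^e$ is $f^e \colon W^e \to Y^e$, which points out of $W^e$, so there is no span $W^e \leftarrow Y^e \rightarrow Z_k^e$ over which to form a pushout, and your later identification with ``$X_{k-1}^e \cup_{Z_{k-1}^e} Z_k^e$'' has the same directional problem. Second, and more importantly, even a charitable reading (e.g.\ $U_k = U \times_{Z^e} Z_k^e$) would not give what you need: you need $U_k$ to fit into a pushout square $W^e \to U_k$, $Y^e \to Z_k^e$ restricting the given one over $Z^e$, and such a filtration of $U$ does \emph{not} come for free from the filtration of $Z^e$. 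The pullback $U \times_{Z^e} Z_k^e$ generally fails to give a pushout with $W^e$ and $Y^e$ because $Z_k^e \to Z^e$ is not a monomorphism of spaces, so universality of colimits replaces $W^e$ and $Y^e$ by their pullbacks. The real work is to \emph{build} such a filtration of $U$, and this is what the paper does: it uses the Blakers--Massey theorem to show that $U_k \to Z_k^e \times_{Z^e} U$ is $(k+2)$-connected, lifts the attaching maps $\coprod_I S^k \to Z_k^e$ through $U_k$ so that the composite into $U$ is nullhomotopic, and then defines $U_{k+1}$ by attaching cells along the lift. Only after this nonequivariant lifting step is the pushout square with $U_{k+1}$ available, and only then does \cref{lem:solving_complement_problem_equivariantly_if_given_nonequivariant_solution} (which itself requires a \emph{single} layer of cells plus a given nonequivariant solution for that layer) apply. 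Your proposal skips this lifting argument entirely — it is not ``painless,'' it is the crux.

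The remaining parts of your verification (that the vertical maps $X_{k-1} \to Z_{k-1}$ are $2$-connected on underlying spaces as pushouts of $f$, that the connectivity inequality $k-1 \le (k-1)+2$ is automatic, that $\pi_0$-surjectivity of $t_k$ holds) are fine once the $U_k$ actually exist, and match the paper. But as written the proof has a genuine gap at the construction of the $U_k$.
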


\begin{proof}
    We prove the statement by induction over the skeletal filtration $(Z_k)_k$ of $Z$ from \cref{obs:good_cell_structure}.
    We first claim that there are commutative diagrams
    \begin{equation}
    \label{diag:induction_step_cell_attachment}
        \begin{tikzcd}
            W^e \ar[r, dashed] \ar[d, "f^e"]
            & U_k \ar[r, dashed] \ar[d, dashed, "f_{k}"]
            & U_{k+1} \ar[d, dashed, "f_{k+1}"] \ar[r, dashed]
            & U \ar[d, "t"]
            \\
            Y^e \ar[r]
            & Z_k^e \ar[r]
            & Z_{k+1}^e \ar[r]
            & Z^e
        \end{tikzcd}
    \end{equation}
    where all squares are pushouts.
    Note that the map $U_k \to Z_k^e \times_{Z^e} U$ is $k+2$-connected by the Blakers-Massey theorem as $Z_k^e \to Z^e$ is $k$-connected and $f^e$ is 2-connected.
    We can thus lift the attaching maps $\coprod_I S^k \to Z_k^e$ to $U_k$ such that the composite $S^k \to U_k \to U$ is nullhomotopic and define $U_{k+1} = U_k \coprod_{\coprod_I S^k \times G} \coprod_I G$ which fits into a diagram \cref{diag:induction_step_cell_attachment}.
    The middle square is a pushout by construction while the right square is a pushout by 2-out-of-3.
    \cref{lem:solving_complement_problem_equivariantly_if_given_nonequivariant_solution} allows us to inductively extend this to pushouts of $G$-spaces
    \begin{equation*}
    \begin{tikzcd}
            W \ar[d, "f"] \ar[r] 
            &  C_k \ar[d] \ar[r, dashed]
            & C_{k+1} \ar[d, dashed]
            & \\
            Y \ar[r] 
            & Z_k \ar[r]
            & Z_{k+1} \ar[r] 
            & Z
        \end{tikzcd}
    \end{equation*}
    restricting to the left square in \cref{diag:induction_step_cell_attachment} on underlying spaces.
    Taking $C = \colim_k C_k$ gives the desired solution of the complement problem.
\end{proof}

\begin{cor}[Complement problem for pairs]\label{cor:existence_relative_complement_problem}
    Assume we are given a complement problem in $\func([1], \spc_G)$ together with compatible solutions of the boundary problem and the underlying relative problem
    \begin{equation}
    \label{eq:pair_complement_problem}
        \begin{tikzcd}
            (W,\partial W) \ar[d, "f"] 
            &    
            & \partial W \ar[r, "\partial s"] \ar[d, "\partial f"] \ar[dr, phantom, very near end, "\ulcorner"]
            & \partial U \ar[d, "\partial t"] 
            & (W,\partial W) \ar[d, "f^e"] \ar[r, "s"] \ar[dr, phantom, very near end, "\ulcorner"]
            & (V, \partial U^e) \ar[d, "t"]
            \\
            (Y,\partial Y) \ar[r, "g"] 
            & (Z,\partial Z) 
            & \partial Y \ar[r, "\partial g"] 
            & \partial Z
            & (Y^e,\partial Y^e) \ar[r, "g^e"] 
            & (Z^e,\partial Z^e) 
        \end{tikzcd}
    \end{equation}
    Assume that 
    \begin{enumerate}
        \item the map $(Y, \partial Y) \to (Z, \partial Z)$ induces an equivalence on fixed points for all subgroups $e \neq H \le G$;
        \item $t \colon V \to Z^e$ is $0$-connected;
        \item $f^e \colon W^e \to Y^e$ and $\partial f^e \colon \partial W^e \to \partial Y^e$ are 2-connected.
    \end{enumerate}
    Then the equivariant complement problem admits a solution, extending the given solution on the boundary and the nonequivariant solution on underlying spaces.
\end{cor}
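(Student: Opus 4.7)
The plan is to reduce this corollary to the absolute case handled by \cref{thm:solution_equivariant_complement_problem} by absorbing the boundary data into the source and target of the complement problem. Set $W' := W \cup_{\partial W} \partial U$ and $Y' := Y \cup_{\partial Y} \partial Z$. The given boundary and nonequivariant pair solutions paste together into an absolute complement problem $W' \to Y' \to Z$ of $G$-spaces with nonequivariant solution $V$: on $e$-fixed points, pasting the boundary pushout with the underlying pair pushout along the common face $(\partial W^e, \partial U^e) \to (\partial Y^e, \partial Z^e)$ yields a pushout $V \cup_{W'^e} Y'^e \simeq Z^e$.

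The next step is to verify the three hypotheses of \cref{thm:solution_equivariant_complement_problem} for this reduced problem. For each subgroup $e \ne H \le G$, the map $Y'^H \to Z^H$ is an equivalence since $\partial Y^H \to \partial Z^H$ is an equivalence by hypothesis (1), yielding $Y'^H \simeq Y^H \simeq Z^H$. The map $V \to Z^e$ is $0$-connected by hypothesis (2). The crucial third hypothesis---that $W'^e \to Y'^e$ is $2$-connected---is the main point to check. Viewing this as the map of pushouts induced by a span map with vertical components $f^e$, $\partial f^e$ and $\partial t \colon \partial U^e \to \partial Z^e$, it suffices that all three are $2$-connected. The first two are given by assumption (3), and $\partial t$ inherits $2$-connectivity from $\partial f^e$ via the standard fact that pushouts of spaces preserve the connectivity of parallel maps, applied to the boundary pushout.

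Applying \cref{thm:solution_equivariant_complement_problem} then produces a $G$-space $C$ solving the reduced complement problem with $C^e \simeq V$. Setting $\partial_0 C := \partial U$, equipped with the composite $\partial U \to W' \to C$, one checks that this gives a solution of the original pair complement problem. The boundary pushout $\partial_0 C \cup_{\partial W} \partial Y \simeq \partial Z$ is precisely the given boundary solution. For the total-space pushout, two applications of the pasting lemma give $W' \cup_W Y \simeq Y'$ (using the boundary pushout), and therefore
\[ C \cup_W Y \simeq C \cup_{W'} (W' \cup_W Y) \simeq C \cup_{W'} Y' \simeq Z. \]
Compatibility with the given nonequivariant and boundary data is immediate by construction.

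The main obstacle in executing this plan is the verification of the $2$-connectivity of $W'^e \to Y'^e$, which hinges on the fact that in the boundary pushout the parallel map $\partial t$ of $\partial f^e$ inherits its $2$-connectivity. This is not one of the two statements explicitly recorded in \cref{recollect:blakers_massey}, but is a standard consequence of the fact that the cofibres of parallel maps in a pushout agree.
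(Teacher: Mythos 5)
Your proposal is correct and takes essentially the same route as the paper: glue the boundary data to the source and target to reduce to an absolute complement problem $W' \to Y' \to Z$, then invoke the absolute theorem. One small caveat: the $2$-connectivity of $W'^e \to Y'^e$ is more cleanly seen by noting that $W^e \to W'^e$ over $Y^e \to Y'^e$ is itself a pushout square on underlying spaces (as the paper's displayed \cref{eq:boundary_trick} makes apparent), so the connectivity of $f^e$ transfers directly; and your justification of that transfer via ``cofibres of parallel maps agree'' is not quite a proof (equal cofibres alone do not determine connectivity), though the fact itself is standard and can be seen by choosing a relative CW model for $f^e$ with cells only in dimensions $\ge 3$.
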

\begin{proof}
    Consider the diagram
    \begin{equation}
    \label{eq:boundary_trick}
        \begin{tikzcd}
            W \ar[r] \ar[d] & W \coprod_{\partial W} \partial U \ar[d] & \\
            Y \ar[r] & Y \coprod_{\partial Y} \partial Z \ar[r] & Z
        \end{tikzcd}
    \end{equation}
    and note that the left square is a pushout. 
    Hence, to find a solution of the outer complement problem, we may as well find one for the right complement problem.
    Note that the nonequivariant solution in \cref{eq:pair_complement_problem} gives a solution
    \begin{equation*}
    \begin{tikzcd}
        W^e \coprod_{\partial W^e} \partial U^e \ar[d] \ar[r] & V \ar[d]
        \\
        Y^e \coprod_{\partial Y^e} \partial Z^e \ar[r] & Z^e
    \end{tikzcd}
    \end{equation*}
    to the complement problem \cref{eq:boundary_trick} on underlying spaces.
    Now \cref{thm:solution_equivariant_complement_problem} gives the desired solution to the equivariant complement problem \cref{eq:boundary_trick}.
\end{proof}

\subsection{Proof of the main theorem}

In this section we prove the main result of this article.

\begin{thm}\label{thm:main_thm}
    Let $X$ be a semifree $G$-Poincar\'e space, $G$ a periodic finite group, and let $k \geq -1$ be such that
    \begin{enumerate}
        \item $\dim(X^G) + 3 \le \dim(X^e)$;
        \item $k \le \dim(X^e) - 2 \dim(X^G) - 3 $.
    \end{enumerate}
    Then the space $\isovstr_G(X) = \PDsfisov{G} \times_{\PDsf{G}} \{X\}$ of isovariant structures on $X$ is $k$-connected. 
\end{thm}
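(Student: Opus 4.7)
The plan is to execute \cref{strategy:constructing_isovariant_structure} in the relative setting. Under hypothesis (1), \cref{lem:triviality_of_isovariant_structure} is available and reduces $k$-connectedness of $\isovstr_G(X)$ to the statement: for each $-1 \le n \le k$, any isovariant structure on $X \times S^n$ (vacuously one when $n = -1$) extends to a relative isovariant structure on $X \times (D^{n+1}, S^n)$. We fix such an $n$ and proceed in two steps.

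For the destabilisation step, \cref{obs:stable_normal_bundle} provides from the given isovariant structure on $X \times S^n$ a free equivariant spherical fibration $q_\partial \colon \partial C_\partial \to X^G \times S^n$ destabilising the underlying stable normal bundle $\nu_{X \times D^{n+1}}$ on the boundary. We extend this lift to $X^G \times D^{n+1}$ via obstruction theory against the $(d-1)$-connected map $\Sigma^\infty_J \colon \hrepfree{G}(d) \to \Pic(\spectra_G^\omega)^\free(d)$ of \cref{thm:destabilisations_of_free_hreps}, where $d = \dim X^e - \dim X^G$. Since $X^G \times (D^{n+1}, S^n)$ has relative CW-dimension $\dim X^G + n + 1 \le d - 2$ by hypothesis (2), the space of such relative extensions is both non-empty and $0$-connected. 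Choose an extension $q \colon \partial C \to X^G \times D^{n+1}$.

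For the complement-building step, we first produce a nonequivariant solution by applying \cref{thm:nonequivariant_existence_embeddings} to the map of Poincar\'e pairs $X^G \times (D^{n+1}, S^n) \to X^e \times (D^{n+1}, S^n)$, with boundary Poincar\'e embedding structure coming from the given isovariant structure on $X \times S^n$. Setting $k_{\mathrm{Klein}} = \dim X^G + n + 1$ and $d_{\mathrm{Klein}} = \dim X^e + n + 1$, the codimension condition $k_{\mathrm{Klein}} \le d_{\mathrm{Klein}} - 3$ is hypothesis (1), and the required connectivity $r \ge 2k_{\mathrm{Klein}} - d_{\mathrm{Klein}} + 2 = 2\dim X^G + n + 3 - \dim X^e$ is $\le 0$ by hypothesis (2), hence holds (with $r = 0$) component-wise. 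Klein's theorem then yields a nonequivariant Poincar\'e embedding whose normal bundle destabilises the underlying stable normal bundle and restricts to $q_\partial^e$ on the boundary; by the uniqueness part of the destabilisation step, this normal bundle is homotopic to $q^e$ relative to the boundary, and so we may transport the complement to one with normal bundle precisely $q^e$. Finally, we invoke \cref{cor:existence_relative_complement_problem} to lift this nonequivariant solution to an equivariant one. Its hypotheses are satisfied: semifreeness of $X$ gives the equivalence on $H$-fixed points for $e \neq H \le G$; the nonequivariant complement map is $0$-connected component-wise; and $q^e$ together with its boundary restriction are $2$-connected, since their fibres are spheres of dimension $d - 1 \ge 2$ by hypothesis (1).

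The main obstacle is the interplay between the two independent steps, namely ensuring that the destabilisation chosen in the first step can be matched to the normal bundle realised by Klein's Poincar\'e embedding. Our hypothesis (2) is stronger by exactly one dimension than what mere existence of a destabilisation would demand, and it is precisely this extra margin that promotes existence to uniqueness up to homotopy in the destabilisation step, thereby allowing Klein's output to be transported onto our chosen $q$.
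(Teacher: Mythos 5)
Your overall strategy is identical to the paper's: reduce via \cref{lem:triviality_of_isovariant_structure} to an extension problem over $X \times (D^{n+1},S^n)$, destabilise $\nu_X$ equivariantly using \cref{thm:destabilisations_of_free_hreps}, produce a nonequivariant Poincar\'e embedding via \cref{thm:nonequivariant_existence_embeddings}, match the normal bundles, and finish with \cref{cor:existence_relative_complement_problem}. Steps 1, 2 and 4 of your argument check out.

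However, there is a genuine gap in step 3. You compare Klein's normal bundle $\mu$ (a nonequivariant spherical fibration) with $q^e$ (the underlying nonequivariant fibration of your chosen equivariant destabilisation $q$) and justify $\mu \simeq q^e$ ``by the uniqueness part of the destabilisation step.'' But the uniqueness you established in step~1 is for \emph{equivariant} destabilisations, i.e.\ lifts along $\Sigma^\infty_J \colon \hrepfree{G}(d) \to \Pic(\spectra_G^\omega)^\free(d)$. Klein's $\mu$ is not a priori the restriction of an equivariant destabilisation, so it lives in a potentially larger space of \emph{nonequivariant} destabilisations, and equivariant uniqueness says nothing about it. What is actually needed (and what the paper does, as a separate third step) is a \emph{nonequivariant} obstruction-theoretic uniqueness argument against $\Sigma^\infty_J \colon \hrep{}(l) \to \Pic(\spectra)(l)$, whose connectivity comes from the Freudenthal suspension theorem rather than from \cref{thm:destabilisations_of_free_hreps}. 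The conclusion $\mu\simeq q^e$ is true under the stated hypotheses, so the gap is fillable, but your cited reason does not supply it. Relatedly, your closing remark misattributes the role of hypothesis (2): the ``extra dimension'' $k \le \dim(X^e) - 2\dim(X^G) - 3$ is precisely what makes the connectivity condition $r \ge 2k_{\mathrm{Klein}} - d_{\mathrm{Klein}} + 2$ in Klein's theorem hold in step~2 (it is not merely a bonus giving uniqueness in step~1), and the separate nonequivariant connectivity bound for step~3 must be checked against it as well.
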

\begin{proof}
    We can assume that $X^e$ is connected as $\isovstr_G(X \coprod Y) \simeq \isovstr_G(X) \times \isovstr_G(Y)$.
    Furthermore, we can reduce to the case where $X^{G}$ is nonempty so that $X^{G} \to X^e$ is 0-connected, as the statement is void otherwise.
    Consider a map $f \colon S^n \to \isovstr_G(X)$ for $-1 \le n \le d^e - 2 d^G - 3$.
    By \cref{lem:triviality_of_isovariant_structure} we have to show that the associated isovariant structure
    \begin{equation} \label{diag:main_thm_sn_embedding}
    \begin{tikzcd}
        \partial C \ar[d, "p"] \ar[r]
        & C \ar[d]
        \\
        X^{G} \times S^n \ar[r]
        & X \times S^n
    \end{tikzcd}
    \end{equation}
    on $X \times S^n$, obtained by unstraightening $f$, extends to a relative isovariant structure on $X \times (D^{n+1}, S^n)$. 
    The case $n = -1$ proves the existence of an isovariant structure.

    \textit{1. Existence of an unstable normal bundle:}
    We first argue that the adjoint map $S^n \to \map(X^{G}, \hrepfree{G})$ obtained by straightening of $p$ extends to $D^{n+1}$.
    Note that it becomes constant with value $\nu = i^* D_{X, G}^{-1} \otimes D_{X^{G}}$ after stabilising along $\Sigma^\infty_J \colon \hrepfree{G} \to \Pic(\spectra_G)$.
    Thus, restricted to a fixed component of $X^{G}$, it lands in $\map(X^{G}, \Pic(\spectra_{G}^\omega/e)^\free(d^e - d^G))$, 
    where $d^e = \dim(X^e)$ and $d^G = \dim(X^G)$.
    Now the map $\Sigma^\infty_J \colon \hrepfree{G}(d^e - d^G) \rightarrow \Pic(\spectra_{G}^\omega)^\free(d^e - d^G)$
    is $d^e - d^G -1$-connected by \cref{thm:destabilisations_of_free_hreps}.
    The space $X^G$ is a $d^G$-dimensional Poincar\'e space and thus admits a $d^G$-dimensional cell structure, so the map
    \begin{equation*}
        \map(X^{G}, \hrepfree{G}(d^e - d^G))
        \to \map(X^{G}, \Pic(\spectra_{G}^\omega/e)^\free(d^e - d^G))
    \end{equation*}
    is $d^e - 2 d^G -1$-connected.
    We see that the extension to $D^{n+1}$ exists if $n+1 \le d^e - 2 d^G - 1$.

    \textit{2. Existence of a nonequivariant Poincar\'e embedding:}
    We want to apply Klein's embedding result \cref{thm:nonequivariant_existence_embeddings} to get the existence of a nonequivariant embedding of $X^G \times (D^{n+1}, S^n) \to X^e \times (D^{n+1}, S^n)$ extending the nonequivariant embedding underlying \cref{diag:main_thm_sn_embedding}.
    To check the dimension constraints, note that $X^G \times (D^{n+1}, S^n)$ is a $d^G + n + 1$-dimensional Poincar\'e pair and thus admits a cell structure of that dimension.
    Similarly, $X^e \times (D^{n+1}, S^n)$ is a $d^e + n + 1$-dimensional Poincar\'e pair.
    The nonequivariant embedding exists if $d^G \le d^e - 3$ and $n \le d^e - 2 d^G -3$.

    \textit{3. Identifying spherical fibrations:}
    We want to argue that the relative spherical fibration $q \colon (D, \partial D) \to X^G \times (D^{n+1}, S^n)$  appearing in this embedding agrees with the underlying map $\nu^e$ of the destabilisation constructed in (1).
    For this, note that both are $d^e - d^G -1$-dimensional spherical fibrations which have equivalent stabilisations.
    As $\Sigma^\infty_J \colon \hrep{}(l) \to \Pic(\spectra)(l)$ is $2l-1$-connected by the Freudenthal suspension theorem, and $X^G \times (D^{n+1}, S^n)$ admits a $d^G + n + 1$-dimensional cell structure, both destabilisations are equivalent if $n \le  2 d^e - 3 d^G - 4$. 
    This is implied by $d^e \le d^G - 3$ and $n \le d^e - 2 d^G -3$.

    \textit{4. Extension of the isovariant structure:}
    Now we can apply \cref{cor:existence_relative_complement_problem} to obtain a pushout
    \begin{equation}\label{diag:equiv_pd_embedding_pair}
    \begin{tikzcd}
        (C_1, \partial C) \ar[r] \ar[d, "p_1"]
        & (C, C_2) \ar[d]
        \\
        X^{G} \times (D^{n+1}, S^n) \ar[r]
        & X \times (D^{n+1}, S^n)
    \end{tikzcd}
    \end{equation}
    restricting to the nonequivariant embedding from (2) on underlying spaces.
    In partiuclar, $(C_2; C, C_1; \partial C)$ is a Poincar\'e triad on underlying spaces.
    By construction, $p_1$ is a free equivariant spherical fibration.
    This shows that \cref{diag:equiv_pd_embedding_pair} defines a relative isovariant structure which completes the proof.
\end{proof}

\begin{rmk}\label{rmk:improved_connectivity}
    In the situation of \cref{thm:main_thm}, assume that the map $X^G \to X^e$ is 1-connected.
    The proof shows that the space $\isovstr_G(X)$ is even $k+1$-connected under this assumption.
    One has to use that in step (2), if $X^G \to X^e$ is 1-connected, Klein's result \cref{thm:nonequivariant_existence_embeddings} provides an embedding $X^G \times (D^{n+1}, S^n) \to X^e \times (D^{n+1}, S^n)$ even for $n = k+1$.
\end{rmk}

\appendix
\counterwithin{thm}{section}

\section{Equivariant spectra with specified isotropy}\label{sec:spectra_specified_isotropy}

The goal of this section is to study a variant of the category $\spectra_G$ of $G$-spectra for a finite group $G$, where not all representation spheres but only those with isotropy in a certain collection $\I \subseteq \orbit(G)$ of orbits are invertible.

We always assume $G/G \in \I$ and that for $G/H, G/K \in \I$ every point in the $G$-set $G/H \times G/K$ has isotropy in $\I$. Set $\spc_G^\I = \presheaf(\I) \subset \spc_G$ to be the category of \textit{$G$-spaces with isotropy in $\I$}.
Left Kan extension along the inclusion $b \colon \I \subset \orbit(G)$ identifies $b_! \colon \spc_G^\I \hookrightarrow \spc_G$ as the full subcategory generated under colimits by the orbits $G/H \in \I$.
Since we assume $G/G \in \I$, the category $\spc_G^\I$ has the final object $G/G$, which is in fact a representable presheaf. The condition on products implies that $\spc_{G}^\I \subset \spc_G$ is closed under products, and that the smash product on $\spc_{G,*}$ restricts to $\spc_{G,*}^\I$.

We call $X \in (\spc_G^\I)^\omega$ a \textit{generalised homotopy representation} if $b_! X \in \spc_G^\omega$ is a generalised homotopy representation, i.e., $b_! X^H \simeq S^{n(H)}$ for all subgroups $H \le G$.
The goal of this section is to study basic properties of the formal inversion
\[ \spectra_G^\I \coloneqq \spc_{G,*}^\I[ \{ X \mid X \in (\spc_{G,*}^\I)^\omega \text{ generalised homotopy representation} \}^{-1}]. \]

\begin{recollect}[Formal inversion]\label{rec:formal_inversion}
    Consider a presentably symmetric monoidal category $\sC$ together with a small collection $I \subseteq \sC$ of objects.
    A map $L \colon \sC \to \sC[I^{-1}]$ in $\calg(\presentable^L)$ exhibits $\sC[I^{-1}]$ as the formal inversion of $I$ in $\sC$ if for any $\D \in \calg(\presentable^L)$ the map
    \begin{equation*}
        \func_{\calg(\presentable^L)}(\sC[I^{-1}], \D) \xrightarrow{L^*} \func_{\calg(\presentable^L)}(\sC, \D)
    \end{equation*}
    is the inclusion of the full subcategory on those functors $F \colon \sC \to \D$ sending objects in $I$ to $\otimes$-invertible objects in $\D$.
    The formal inversion always exists by \cite[Section 2.1]{Robalo2014}, see also \cite[Section 6.1]{Hoyois2017}.
    It is shown in \cite[Corollary 2.22]{Robalo2014} that if $x \in \sC$ is $n$-symmetric for some $n \ge 2$, i.e. the cyclic rotation $\sigma \colon x^{\otimes n} \to x^{\otimes n}$ is equivalent to the identity, then the formal inversion $\sC[x^{-1}]$ is given by the telescopic colimit
    \begin{equation}\label{eq:telescopic_colimit_formal_inversion}
        \colim\left(\sC \xrightarrow{x \otimes -} \sC \xrightarrow{x \otimes -} \dots \right) \in \presentable^L
    \end{equation}
    formed in $\presentable^L$.
    Moreover, if $\sC$ is compactly generated and $x\otimes - \colon \sC \to \sC$ preserves compact objects, then $\sC[x^{-1}]$ is compactly generated and we obtain from \cite[Proposition 5.5.7.8]{lurieHTT} and \cite[Lemma 7.23.5.10]{lurieHA}
    \begin{equation}\label{eq:telescopic_colimit_compacts}
        \sC[x^{-1}]^\omega \simeq \colim\left(\sC^\omega \xrightarrow{x \otimes -} \sC^\omega \xrightarrow{x \otimes -} \dots \right) \in \cat,
    \end{equation}
    where the colimit can equivalently be computed in $\cat$ or in $\cat_{\mathrm{rex}}^{\mathrm{idem}}$.
\end{recollect}

The category $\spectra_G$ can be obtained by inverting a single finite dimensional $G$-representation sphere $V$ containing all irreducible ones as a summand.
Similarly, we show that the category $\spectra_G^\I$ of $G$-spectra with isotropy in $\I$ can be obtained as the formal inversion at a single generalised homotopy representation.

\begin{defn}
    \label{def:isotropy_dualising_sphere}
    A generalised $G$-homotopy representation $V \in (\spc_{G,*}^\I)^\omega$ is called an \textit{isotropy dualising sphere} if
    \begin{enumerate}
        \itemsep0em
        \item there exists $V' \in \spc_{G,*}^\I$ such that $V \simeq S^1 \wedge V'$;
        \item the cyclic rotation map $\sigma \colon V^{\wedge n} \rightarrow V^{\wedge n}$ is equivalent to the identity for some $n \ge 2$;
        \item 
        For each $G/H \in \I$, there is an $H$-equivariant
        map $c \colon V \rightarrow (G/H)_+ \wedge V$ such that the composite
        \begin{equation*}
            \res_H^G V \xrightarrow{\res_H^G c} \res_H^G (G/H)_+ \wedge \res_H^G V \xrightarrow{\pi} S^0 \wedge \res_H^G V \simeq \res_H^G V
        \end{equation*}
        is equivalent to the identity, where $\pi$ is induced by the map $G/H \to *$. 
    \end{enumerate}
\end{defn}

\begin{example}\label{ex:iso_separating_sphere_periodic_group}
    Suppose that $G$ is a periodic group and consider $\I = \{G/e, G/G\}$.
    By \cref{sec:destabilisations} there exists a free generalised homotopy representation $W\in \spc_G^\omega$ and we claim that $V = W \join S^1$ is an isotropy separating sphere.
    \begin{enumerate}
        \item We have $W \join S^1 \simeq (W \join S^0) \wedge S^1$.
        \item The cyclic rotation map $\sigma \colon V^{\wedge 3} \rightarrow V^{\wedge 3}$ is equivalent to the identity. 
        Indeed, it has degree one both on fixed points and on underlying spheres. Note that since $W$ has a $G$-CW structure of dimension $d$, there is a $G$-CW structure on $V$ of dimension $d+2$ with fixed CW-space of dimension $1$, and only cells with isotropy $G/e$ and $G/G$.
        In particular, tom Dieck's equivariant Hopf degree theorem \cite[Theorem 8.4.1]{tomDieck_Transformation_and_Repthy} applies to show that $G$-homotopy classes of maps $V \rightarrow V$ are determined by their degrees on fixed points and underlying spaces. This shows that $\sigma$ is equivalent to the identity.
        \item We can lift a nonequivariant Poincar\'e embedding $* \hookrightarrow W/G$ to an equivariant Poincar\'e embedding $G/e \hookrightarrow W$.
        Suspending this further, we obtain an equivariant Poincar\'e embedding 
        \begin{equation}
        \begin{tikzcd}
            S \ar[r] \ar[d] & C \ar[d] \\
            G/e \ar[r, hook] & V.
        \end{tikzcd}
        \end{equation}
        Now consider the map $c \colon V \to G_+ \wedge \res_e^G V$ obtained as the composite
        \begin{equation*}
            V \to \cofib(C \to V) \simeq \cofib(S \to G/e) \simeq G_+ \wedge \cofib(S(e) \to *) \simeq G_+ \wedge \res_e^G V,
        \end{equation*}
        where $S(e)$ denotes the fibre of $S \to G/e$ over $e$.
        This gives the map $c$ with the desired properties.
    \end{enumerate}
\end{example}

For the rest of this section, let us fixed an isotropy separating sphere $V$.
The main result of this section is the following.
\begin{thm}\label{thm:formal_inversion_homotopy_representation}
    Suppose the pair $(G,\I)$ admits an isotropy dualising sphere $V$. 
    Then the symmetric monoidal functor $\spectraisotropysphere{G}{\I}{V} \to \spectra_G^\I$ is an equivalence. 
    Furthermore, the following hold:
    \begin{enumerate}
        \item $\spectra_G^\I$ is a stable category;
        \item the image of the orbits $G/H_+$ under $\Sigma^\infty_\I \colon \spc_{G,*}^\I \rightarrow \spectra_G^\I$ for all $G/H \in \I$ form a family of compact, self-dual generators of $\spectra_G^\I$ under colimits and shifts.
        In particular, the genuine fixed points $X^H = \mapsp_{\spectra_G^\I} (\Sigma^\infty_+ G/H, X)$ are jointly conservative for $G/H \in \I$;
        \item the symmetric monoidal geometric fixed points $\Phi^H \colon \spectra_G^\I \to \spectra$ for $G/H \in \I$  are jointly conservative on compact objects, also see \cref{rem:geometric_fixed_points_conservative}.
    \end{enumerate}
\end{thm}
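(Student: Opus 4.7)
The plan is to first prove the claimed equivalence by showing that the single isotropy dualising sphere $V$ already suffices to invert every generalised homotopy representation in $\spc_{G,*}^\I$. The canonical symmetric monoidal functor $\spc_{G,*}^\I \to \spectra_G^\I$ sends $V$ to an invertible object, hence factors uniquely through $\spectraisotropysphere{G}{\I}{V}$ by \cref{rec:formal_inversion}. For the reverse direction one invokes the universal property of $\spectra_G^\I$: it suffices to check that every generalised homotopy representation $W \in (\spc_{G,*}^\I)^\omega$ becomes $\otimes$-invertible after formal inversion of $V$. Condition (2) of \cref{def:isotropy_dualising_sphere} together with \cref{rec:formal_inversion} lets us model $\spectraisotropysphere{G}{\I}{V}$ as the telescopic colimit \cref{eq:telescopic_colimit_formal_inversion}, giving concrete access to mapping spaces and, through \cref{eq:telescopic_colimit_compacts}, to compact objects.

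The heart of the argument is to establish that each orbit $\Sigma^\infty G/H_+$ for $G/H \in \I$ is self-dual in $\spectraisotropysphere{G}{\I}{V}$, with coevaluation built from the map $c \colon V \to (G/H)_+ \wedge V$ of condition (3), in the spirit of the Wirthmüller isomorphism in genuine equivariant stable homotopy theory. The identity $\pi \circ c \simeq \id$ after restriction supplies one of the triangle identities directly, and the other follows from compatibility with the cyclic symmetry of condition (2) along the telescope. Once orbits are known to be self-dual, and observing that the category is already stable (since $V \simeq S^1 \wedge V'$ becomes invertible, so does $S^1$), any finite $G$-cell complex built out of cells of isotropy in $\I$ is dualisable. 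A generalised homotopy representation $W$ is such a finite cell complex after stabilisation, and its dualisability together with the fact that each $W^H$ is an invertible sphere forces $W$ itself to be $\otimes$-invertible.

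Parts (1) and (2) of the theorem then follow formally. Stability is as above. The orbits $\Sigma^\infty G/H_+$ for $G/H \in \I$ inherit compactness from $\spc_{G,*}^\I$ via the description \cref{eq:telescopic_colimit_compacts}, using that $V \wedge -$ preserves compact objects since $V$ itself is compact. They generate under colimits and shifts since they already do so in $\spc_{G,*}^\I$ and the stabilisation map factors through each stage of the telescope. Joint conservativity of the genuine fixed points $(-)^H = \mapsp_{\spectra_G^\I}(\Sigma^\infty G/H_+, -)$ is then the standard consequence of having compact generators in a stable presentable category.

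The main obstacle I anticipate is part (4), the joint conservativity of the symmetric monoidal geometric fixed point functors $\Phi^H$ on compact objects. The strategy will be to establish that on compact objects, vanishing of all $\Phi^H$ for $G/H \in \I$ forces the object to be zero, via an inductive argument on the poset $\I$ which reduces to a calculation of geometric fixed points on the orbit generators together with a Verdier quotient description of $\Phi^H$ localising at the proper subconjugacy classes. This mirrors the classical isotropy separation argument for $\spectra_G$, but care is needed because we only have access to orbits of isotropy in $\I$, so that the usual cofibre sequences of universal spaces must be replaced by their $\I$-restricted analogues. It is precisely here that the closure assumption on $\I$ under isotropy of products of orbits plays a crucial role, ensuring that the relevant cofibres remain within $\spectra_G^\I$.
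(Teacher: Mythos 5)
Your overall architecture for the main equivalence matches the paper: build a Wirthmüller-style self-duality for the orbits out of the map $c$ from condition (3) of \cref{def:isotropy_dualising_sphere}, deduce that every compact object — in particular every generalised homotopy representation — is dualisable, and then verify invertibility by checking it on symmetric monoidal geometric fixed points. Two concrete issues, one of ordering and one a genuine gap, need attention.

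First, the ordering. Your step ``dualisability together with the fact that each $W^H$ is an invertible sphere forces $W$ itself to be $\otimes$-invertible'' silently uses joint conservativity of the functors $\Phi^H$ on compacts, i.e.\ part (3) of the theorem, which you defer to the end and flag as ``the main obstacle.'' But it is not a downstream add-on: it is the crux needed to close the equivalence. The paper proves this conservativity statement as a standalone lemma \emph{before} the theorem's proof, working entirely inside $\spectraisotropysphere{G}{\I}{V}$ via the composite $\spectraisotropysphere{G}{\I}{V} \to \spectra_G \xrightarrow{\Phi^H} \spectra$, precisely because no circular appeal to the not-yet-established equivalence is permitted. Moreover, the argument there is much lighter than your proposed inductive isotropy-separation: using \cref{eq:telescopic_colimit_compacts}, every compact $E$ is a $V$-desuspension of $\Sigma^\infty_V A$ for compact $A \in \spc_{G,*}^\I$; vanishing of each $\Phi^H E$ forces $\Sigma^\infty A^H \simeq 0$, hence $S^2 \wedge A^H \simeq *$, hence $S^2 \wedge A \simeq *$ because $A$ has isotropy in $\I$, hence $V^{\wedge 2}\wedge A \simeq *$ using $V \simeq S^1 \wedge V'$, hence $E \simeq 0$. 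No induction over the poset $\I$ and no $\I$-restricted universal-space technology is needed; those ($\widetilde{E\proper_\I}$, \cref{prop:geom_fixed_points_eptilde}, \cref{lem:geoemtric_fixed_points_detection}) are developed by the paper only afterwards, and for other purposes.

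Second, the gap in part (2). The telescope description \cref{eq:telescopic_colimit_compacts} a priori produces compact generators of the form $\Sigma^\infty_V V^{\wedge -n} \wedge G/H_+$ with $n \ge 0$, not just the orbits themselves. Your sentence ``they generate under colimits and shifts since they already do so in $\spc_{G,*}^\I$ and the stabilisation map factors through each stage of the telescope'' does not address this: $V^{-n}$ is a desuspension by a generalised homotopy representation, not by $S^1$, so it is not reachable from the orbits by shifts alone. The paper closes this by observing that $V^{-n}$ is dual to $V^{n}$, which lies in the thick subcategory generated by the orbits, and that this thick subcategory is closed under duals because the orbits are self-dual. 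You need this extra step; without it, part (2) is not established.
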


\begin{cons}
    The geometric fixed points $\Phi^H \colon \spectra_G^\I \to \spectra$ in the previous result are constructed as the symmetric monoidal colimit preserving extension of the composite
    \begin{equation*}
        \spc_{G,*}^\I \hookrightarrow \spc_{G,*} \xrightarrow{(-)^H} \spc_* \xrightarrow{\Sigma^\infty} \spectra,
    \end{equation*}
    which inverts all generalised homotopy representations.
    In particular, there is an equivalence $\Phi^H \Sigma^\infty (-) \simeq \Sigma^\infty (-)^H$.
\end{cons}

For the rest of this section, denote by $\Sigma^\infty_V \colon \spc_{G,*}^\I \to \spectraisotropysphere{G}{\I}{V}$ the formal inversion of $V$ with right adjoint $\Omega^\infty_V \colon \spectraisotropysphere{G}{\I}{V} \to \spc_{G,*}^\I$.
Before proving \cref{thm:formal_inversion_homotopy_representation}, let us establish a few properties of $\spectraisotropysphere{G}{\I}{V}$. The following lemma, a weaker version of part (2) of \cref{thm:formal_inversion_homotopy_representation}, is used in its proof.
\begin{lem}\label{lem:generators_spectra_isotropy}
    The category $\spectraisotropysphere{G}{\I}{V}$ is stable and the orbits $\Sigma^\infty_V V^{\wedge -n} \wedge G/H_+$ for $G/H \in \I$ and $n \ge 0$ form a family of compact generators of $\spectraisotropysphere{G}{\I}{V}$ as a presentable category.
\end{lem}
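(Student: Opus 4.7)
The plan is to derive all three assertions from the description of $\spectraisotropysphere{G}{\I}{V}$ as a telescopic colimit, which is made available by the axioms on the isotropy dualising sphere $V$.

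First, I would invoke the formal inversion recollection (\cref{rec:formal_inversion}). Condition (2) in \cref{def:isotropy_dualising_sphere} says that $V$ is $n$-symmetric for some $n \ge 2$, so by \cref{eq:telescopic_colimit_formal_inversion} the formal inversion can be written as the telescopic colimit
\[
\spectraisotropysphere{G}{\I}{V} \simeq \colim\left(\spc_{G,*}^\I \xrightarrow{V \wedge -} \spc_{G,*}^\I \xrightarrow{V \wedge -} \dots \right)
\]
in $\presentable^L$. Since $V$ lies in $(\spc_{G,*}^\I)^\omega$, the endofunctor $V \wedge -$ preserves compact objects, so by \cref{eq:telescopic_colimit_compacts} the same formula computes the compact objects in $\cat$: one has $(\spectraisotropysphere{G}{\I}{V})^\omega \simeq \colim((\spc_{G,*}^\I)^\omega \xrightarrow{V \wedge -} \dots)$.

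For stability, I would use condition (1) of \cref{def:isotropy_dualising_sphere}, that $V \simeq S^1 \wedge V'$. Setting $F = V \wedge -$ and $G = V' \wedge -$ as endofunctors of $\spectraisotropysphere{G}{\I}{V}$, symmetric monoidality gives the factorisations $F \simeq \Sigma \circ G \simeq G \circ \Sigma$. The invertibility of $F$ then exhibits $F^{-1} \circ G$ and $G \circ F^{-1}$ as a left and a right inverse to $\Sigma$; since a left inverse $L$ and a right inverse $R$ must agree via $L \simeq L \circ \Sigma \circ R \simeq R$, the suspension $\Sigma$ is an equivalence, so $\spectraisotropysphere{G}{\I}{V}$ is stable.

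For compactness and generation, each orbit $G/H_+$ with $G/H \in \I$ is compact in $\spc_{G,*}^\I$, and the colimit description of compact objects immediately puts $V^{\wedge -n} \wedge G/H_+$, representing $G/H_+$ at stage $n$ of the telescope, in $(\spectraisotropysphere{G}{\I}{V})^\omega$. Conversely, every compact object of $\spectraisotropysphere{G}{\I}{V}$ is a retract of one of the form $V^{\wedge -n} \wedge Y$ for some $Y \in (\spc_{G,*}^\I)^\omega$, and any such compact $Y$ is built from the orbits $G/H_+$ with $G/H \in \I$ by finite colimits and retracts. Smashing with $V^{\wedge -n}$ then shows that the proposed objects generate $(\spectraisotropysphere{G}{\I}{V})^\omega$ under finite colimits and retracts, hence generate $\spectraisotropysphere{G}{\I}{V}$ under arbitrary colimits.

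The main obstacle is arguably the stability argument, since it is the one point where condition (1) of \cref{def:isotropy_dualising_sphere} enters nontrivially; the other two claims are essentially a bookkeeping exercise in the telescopic formulas from \cref{rec:formal_inversion}.
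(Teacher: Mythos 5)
Your proposal is correct and follows essentially the same route as the paper: telescopic colimit description from the cyclic invariance of $V$, stability from the splitting $V \simeq S^1 \wedge V'$, and generation by orbits from the compact generators of $\spc_{G,*}^\I$ together with the colimit formula for compact objects. The only cosmetic difference is in the stability step, where the paper argues more directly at the level of objects — since $\Sigma^\infty_V V \simeq \Sigma^\infty_V S^1 \otimes \Sigma^\infty_V V'$ is $\otimes$-invertible and $\otimes$ is symmetric, the factor $\Sigma^\infty_V S^1$ is already $\otimes$-invertible, so $\Sigma \simeq \Sigma^\infty_V S^1 \otimes -$ is an equivalence — whereas you repackage the same observation as a one-sided-inverse argument on endofunctors; both are fine.
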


\begin{proof}
    In $\spc_{G,*}^\I$ we have that $\Sigma(-) \simeq S^1 \wedge -$.
    As $\Sigma^\infty_V$ is symmetric monoidal and colimit preserving, we see that $\Sigma$ is invertible on $\spectraisotropysphere{G}{\I}{V}$ if and only if $\Sigma^\infty_V S^1$ is invertible.
    But this follows from invertibility of $\Sigma^\infty_V V \simeq \Sigma^\infty_V S^1 \otimes \Sigma^\infty_V V'$ using assumption (1) in \cref{def:isotropy_dualising_sphere}.

    Next, recall from \cref{rec:formal_inversion} that, by the cyclic invariance condition, the formal inversion $\spectraisotropysphere{G}{\I}{V}$ is given by the telescopic colimit \cref{eq:telescopic_colimit_formal_inversion}.
    As the orbits $G/H_+$ for $G/H \in \I$ form a family of compact generators of $\spc_{G, *}^\I$, it follows from \cref{eq:telescopic_colimit_formal_inversion} that the objects $\Sigma^\infty_V V^{\wedge -n} \wedge G/H_+$ for $G/H \in \I$ and $n \ge 0$ form a family of compact generators of $\spectraisotropysphere{G}{\I}{V}$.
\end{proof}

Next, we compare $ \spectra_G^\I$ to $\spectra_G$.
Note that the image of $V$ under the colimit preserving symmetric monoidal functor $\spc_{G,*}^\I \xrightarrow{b_!} \spc_{G,*} \xrightarrow{\Sigma^\infty} \spectra_G$ becomes invertible given that $\Sigma^\infty V$ is a compact $G$-spectrum with invertible geometric fixed points for all subgroups $H \le G$.
By construction, this composite factors through a symmetric monoidal colimit preserving functor
\begin{equation*}
    L \colon \spectraisotropysphere{G}{\I}{V} \to \spectra_G.
\end{equation*}

\begin{lem}
    The geometric fixed points $\spectraisotropysphere{G}{\I}{V} \to \spectra_G \xrightarrow{\Phi^H} \spectra$ for all $G/H \in \I$ are jointly conservative on compact objects.
\end{lem}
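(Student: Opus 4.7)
Since each $\Phi^H$ is exact, joint conservativity reduces to the statement: if $C \in \spectraisotropysphere{G}{\I}{V}^\omega$ satisfies $\Phi^H C \simeq 0$ for all $G/H \in \I$, then $C \simeq 0$. I would prove this by induction on the finite collection $\I$.

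To set up, by the telescope formula \cref{eq:telescopic_colimit_compacts}, $C$ is a retract of $V^{\otimes -n} \otimes \Sigma^\infty_V Y$ for some $Y \in (\spc_{G,*}^\I)^\omega$ and $n \ge 0$. Because $\Phi^H V \simeq \Sigma^\infty V^H$ is an invertible spectrum for each $G/H \in \I$, smashing with $V^{\otimes n}$ does not change the vanishing condition, so we may assume $n=0$.

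Next, I would single out a maximal element $H_0 \in \I$ (one with no $G/K \in \I$ satisfying $H_0 \subsetneq K$; such an element exists by finiteness of $\I$). The key point is the computation
\begin{equation*}
  \Phi^{H_0}(\Sigma^\infty_V G/K_+) \simeq \Sigma^\infty (G/K)^{H_0}_+,
\end{equation*}
which is non-zero precisely when $H_0$ is subconjugate to $K$. For $G/K \in \I$ with $K \neq H_0$, maximality of $H_0$ forces $(G/K)^{H_0} = \varnothing$, so $\Phi^{H_0}$ kills the generators of isotropy $G/K \neq G/H_0$. On the other hand, $\Phi^{H_0}(\Sigma^\infty_V G/H_0{}_+) \simeq \Sigma^\infty (WH_0)_+ \neq 0$. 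Thus the vanishing of $\Phi^{H_0} C$ should force $C$ to lie in the thick subcategory of $\spectraisotropysphere{G}{\I}{V}^\omega$ generated by the cells of isotropy in $\I' = \I \setminus \{G/H_0\}$. One then verifies that $\I'$ is again closed under the product condition (using maximality of $H_0$) and identifies this thick subcategory with the compact objects of $\spectraisotropysphere{G}{\I'}{V'}$ for a suitable isotropy dualising sphere $V'$ of $(G, \I')$, so the induction hypothesis applies.

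\textbf{The main obstacle} is the retract: since $C$ is only a summand of $\Sigma^\infty_V Y$ rather than $\Sigma^\infty_V Y$ itself, one cannot simply read off the cellular structure of $C$ from that of $Y$. To address this cleanly, I would argue that the vanishing $\Phi^{H_0} C \simeq 0$ permits a \emph{cell-trading} on the retract description, replacing $Y$ with a new representative whose cells of isotropy $G/H_0$ lie entirely in the complementary summand of $C$; this uses that the $\Phi^{H_0}$-detected part of $\Sigma^\infty_V Y$ is itself a wedge of generators of isotropy $G/H_0$ (up to the action of $V^{\otimes -m}$). After this reduction, $C$ is realised as a retract of an object built from cells in $\I'$, closing the induction. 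The base case $\I = \{G/G\}$ is immediate since $\Phi^G$ is non-zero on the unique generator $\Sigma^\infty_V V^{\otimes -n} \otimes G/G_+$.
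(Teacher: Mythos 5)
Your proposal takes a genuinely different and substantially more complicated route than the paper. The paper gives a short, direct, non-inductive argument: by the telescope formula \cref{eq:telescopic_colimit_compacts}, which the paper records as a filtered colimit in $\cat$ (not merely in idempotent-complete categories), every compact object $E$ is \emph{equivalent} to $(\Sigma^\infty_V V)^{\otimes -k} \otimes \Sigma^\infty_V A$ for some compact $A \in (\spc_{G,*}^\I)^\omega$, not just a retract of such. The vanishing $\Phi^H E \simeq 0$ for all $G/H \in \I$ then says each $\Sigma^\infty A^H \simeq 0$, so $S^2 \wedge A^H$ is a simply connected space with vanishing reduced homology, hence contractible; since the $H$-fixed points for $G/H \in \I$ are jointly conservative on $\spc_G^\I$, one gets $S^2 \wedge A \simeq *$, hence $V^{\wedge 2} \wedge A \simeq *$ (as $V \simeq S^1 \wedge V'$), hence $E \simeq (\Sigma^\infty_V V)^{\otimes -k-2} \otimes \Sigma^\infty_V(V^{\wedge 2} \wedge A) \simeq 0$. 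Your ``main obstacle'' --- that $C$ is only a retract --- therefore does not arise in the paper at all; the stronger form of the telescope formula is exactly what makes the direct argument go through.

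Your inductive approach, besides being unnecessarily involved, has real gaps as written. The cell-trading step is only sketched; making it precise is nontrivial, since a retract of $\Sigma^\infty_V Y$ need not be realised by a subcomplex of $Y$, and the proposed manoeuvre of isolating the ``$\Phi^{H_0}$-detected part'' as a wedge of $G/H_0$-generators does not follow from anything you have set up. The identification of the thick subcategory generated by $\I'$-orbits with $(\spectraisotropysphere{G}{\I'}{V'})^\omega$ is asserted but not justified: you would need a fully faithful symmetric monoidal functor relating the two, and more importantly you would need $(G,\I')$ to admit an isotropy dualising sphere $V'$ at all, which is not automatic ($V$ itself has isotropy in $\I$, not $\I'$, and the existence of such spheres is a genuine hypothesis in \cref{def:isotropy_dualising_sphere}, verified only in special cases like \cref{ex:iso_separating_sphere_periodic_group}). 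Until these points are supplied, the induction does not close. I would recommend using the honest-equivalence form of the telescope formula and following the paper's short argument instead.
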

\begin{proof}
    Consider $E \in (\spectraisotropysphere{G}{\I}{V})^\omega$ with $\Phi^H(E) \simeq 0$ for all $G/H \in \I$.
    If follows from \cref{eq:telescopic_colimit_compacts} that there are $A \in (\spc_{G,*}^I)^\omega$ and $k \ge 0$ together with an equivalence $E \simeq (\Sigma^\infty_V V)^{\otimes -k} \otimes \Sigma^\infty_V A$.
    We compute
    \begin{equation*}
        0 \simeq \Phi^H E \simeq \Phi^H( (\Sigma^\infty V)^{\otimes -k} \otimes \Sigma^\infty A) \simeq \sphere^{-n(h) k} \otimes \Sigma^\infty A^H,
    \end{equation*}
    from which we conclude $\Sigma^\infty A^H \simeq 0$.
    In particular, each component of $A$ is acyclic, i.e., has vanishing homology showing that $S^2 \wedge A^H \simeq 0$.
    Note that this even implies $S^2 \wedge A \simeq 0$ as it is a $G$-space with isotropy in $\I$ all of whose $\I$-fixed points vanish.
    Now $V$ contains $S^1$ as a wedge summand from which we find $V^{\wedge 2} \wedge A \simeq 0$ and consequently
    \begin{equation*}
        E \simeq (\Sigma^\infty_V V)^{\otimes -k-2} \otimes \Sigma^\infty_V (V^{\wedge 2} \wedge A) \simeq 0. \qedhere
    \end{equation*}
\end{proof}

\begin{proof}[Proof of \cref{thm:formal_inversion_homotopy_representation}.]
    Let us start by showing that the orbits $\Sigma^\infty_V G/H_+$ are dualisable and even self-dual for $G/H \in \I$.
    We can construct evaluation and coevaluation maps as follows:
    \begin{align*}
        &\coev \colon V \xrightarrow{c} 
        G/H_+ \wedge V \xrightarrow{\Delta} G/H_+ \wedge G/H_+ \wedge V \\
        &\eval \colon (G/H \times G/H)_+ \wedge V \simeq (G/H_+ \vee T_+) \wedge V \xrightarrow{p} G/H_+ \wedge V \xrightarrow{G/H \to *} V.
    \end{align*}
    The second map is induced by the decomposition of finite $G$-sets $G/H \times G/H \simeq G/H \amalg T$, splitting of the diagonal copy of $G/H$ in $G/H \times G/H$, and $p \colon T_+ \to *$ collapses $T_+$ to the base point.
    As in the proof of the Wirthmüller isomorphism, one checks that the composites
    \begin{align*}
        G/H_+ \wedge V \xrightarrow{\coev} G/H_+ \wedge (G/H_+ \wedge G/H_+ \wedge V) \simeq G/H_+ \wedge G/H_+ \wedge V \wedge G/H_+ \xrightarrow{\eval} V \wedge G/H_+ \\
        V \wedge G/H_+ \xrightarrow{\coev} (G/H_+ \wedge G/H_+ \wedge V) \wedge G/H_+ \simeq G/H_+ \wedge G/H_+ \wedge G/H_+ \wedge V \xrightarrow{\eval} G/H_+ \wedge V
    \end{align*}
    are equivalent to the flip maps, where the middle equivalences swap the third and fourth factor.
    This implies that $\Sigma^\infty G/H_+$ is self-dual in $\spectraisotropysphere{G}{\I}{V}$ as $V$ is invertible in $\spectraisotropysphere{G}{\I}{V}$.
    
    It remains to show that every generalised homotopy representation $Y \in (\spc_{G,*}^\I)^\omega$ is invertible in $\spc_{G,*}^\I[V^{-1}]$.
    This immediately implies that $\spc_{G,*}^\I[V^{-1}] \to \spectra_G^\I$ is an equivalence.
    By compactness, $Y$ lies in the smallest subcategory of $\spc_{G,*}^\I$ which contains the orbits $G/H_+$ for $G/H \in \I$ and is closed under finite colimits and retracts.
    As dualisable objects in a stable category are closed under finite colimits and retracts we see that $\Sigma^\infty_V Y$ is dualisable.
    Invertibility of a dualisable object can be checked after applying the jointly conservative symmetric monoidal geometric fixed point functors, which is clear by $\Phi^H \Sigma^\infty_V Y \simeq \Sigma^\infty Y^H \simeq \sphere^{n(H)}$.

    Finally, let us argue that the orbits $\Sigma^\infty_+ G/H$ for $G/H \in \I$ already generate $\spectraisotropysphere{G}{\I}{V}$.
    By \cref{lem:generators_spectra_isotropy} it suffices to argue that $V^{-n}$ lies in the thick subcategory generated by the orbits under finite limits, finite colimits and retracts.
    But $V^{-n}$ is dual to $V^n$ which belongs to this thick subcategory.
    We showed before that the orbits are self-dual, which implies that the thick subcategory generated by them is also self-dual.
\end{proof}

We will also need an alternative description of geometric fixed points, which generalises the formula $\Phi^G(X) = (X \otimes \widetilde{E\proper})^G$ for $X \in \spectra_G$.
Denote by $E\proper_\I \colon \I\op \to \spc$ the $G$-space with isotropy in $\I$ characterised by 
\begin{equation*}
    E\proper_\I \colon \I\op \to \spc, \quad G/H \mapsto 
    \begin{cases}
        \varnothing & H = G;
        \\
        * & H \neq G.
    \end{cases}
\end{equation*}
The space $\widetilde{E\proper_\I} \in \spc_{G, *}^\I$ is defined by the cofibre sequence $(E \proper_\I)_+ \to S^0 \to \widetilde{E\proper_\I}$.
\begin{prop}\label{prop:geom_fixed_points_eptilde}
    For any $X \in \spectra_G^\I$, the map
    \begin{equation*}
        \Phi^G \colon (X \otimes \widetilde{E\proper_\I})^G \simeq \mapsp_{\spectra_G^\I}(\Sigma^\infty_+ G/G, X \otimes \widetilde{E\proper_\I}) \xrightarrow{\Phi^G} \mapsp_{\spectra}(\sphere, \Phi^G(X)) \simeq \Phi^G(X)
    \end{equation*}
    is an equivalence.
\end{prop}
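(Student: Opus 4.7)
The plan is to reduce to an explicit calculation on the compact generators of $\spectra_G^\I$. Both $X \mapsto (X \otimes \widetilde{E\proper_\I})^G$ and $X \mapsto \Phi^G(X)$ are exact colimit-preserving functors $\spectra_G^\I \to \spectra$: the first because $\Sigma^\infty_+ G/G$ is compact by \cref{thm:formal_inversion_homotopy_representation}(2) and $-\otimes \widetilde{E\proper_\I}$ is colimit-preserving, the second by construction. The natural transformation in question makes sense once we observe $\Phi^G \widetilde{E\proper_\I} \simeq \sphere$, which follows from the cofibre sequence $(E\proper_\I)_+ \to \unit \to \widetilde{E\proper_\I}$ together with the identity $\Phi^G \Sigma^\infty Y_+ \simeq \Sigma^\infty Y^G_+$ and the fact that $E\proper_\I^G = \varnothing$. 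By \cref{thm:formal_inversion_homotopy_representation}(2), it therefore suffices to check the natural transformation on the generators $\Sigma^\infty_+ G/H$ for each $G/H \in \I$.

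For $H \neq G$, both sides vanish. On the right, $\Phi^G(\Sigma^\infty_+ G/H) \simeq \Sigma^\infty (G/H)^G_+ = 0$. On the left, one checks on the values at each $G/K \in \I$ that the pointed presheaf $\widetilde{E\proper_\I} \wedge G/H_+$ in $\spc_{G,*}^\I$ is contractible: either $\widetilde{E\proper_\I}^K = \ast$ (if $K \neq G$), or $(G/H)^G = \varnothing$ (if $K = G$, as $H \neq G$).

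The main obstacle is the case $H = G$, where we need the natural map $(\widetilde{E\proper_\I})^G \to \sphere$ induced by $\Phi^G$ to be an equivalence. For this I would apply the explicit colimit description of mapping spectra in $\spectra_G^\I$, the analogue of property (4) from \cref{sec:semifree_g_spectra}, which gives
\[
  \pi_m(\widetilde{E\proper_\I})^G \;\simeq\; \pi_0 \, \colim_n \, \map^G_*\bigl(V^{\wedge n} \wedge S^m,\, V^{\wedge n} \wedge \widetilde{E\proper_\I}\bigr).
\]
The inclusion of $G$-fixed points $(V^G)^{\wedge n} \hookrightarrow V^{\wedge n}$ induces an equivalence $(V^G)^{\wedge n} \wedge \widetilde{E\proper_\I} \simeq V^{\wedge n} \wedge \widetilde{E\proper_\I}$ in $\spc_{G,*}^\I$, as both pointed presheaves have value $(V^G)^{\wedge n}$ at $G/G$ and basepoint at all other $G/K \in \I$. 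Since the target is concentrated at $G/G$, a pointed natural transformation $V^{\wedge n} \wedge S^m \to (V^G)^{\wedge n} \wedge \widetilde{E\proper_\I}$ of pointed presheaves on $\I$ is uniquely determined by its $G/G$-component, yielding a natural equivalence
\[
  \map^G_*\bigl(V^{\wedge n} \wedge S^m,\, (V^G)^{\wedge n} \wedge \widetilde{E\proper_\I}\bigr) \;\xrightarrow{\simeq}\; \map_*\bigl((V^G)^{\wedge n} \wedge S^m,\, (V^G)^{\wedge n}\bigr)
\]
by restriction to $G$-fixed points. The colimit of the right-hand side over $n$ computes $\pi_m \sphere$ via the Freudenthal suspension theorem, using $\dim V^G \geq 1$, which holds by the assumption $V \simeq S^1 \wedge V'$ in \cref{def:isotropy_dualising_sphere}(1). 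Tracking through $\Phi^G \Sigma^\infty Y \simeq \Sigma^\infty Y^G$ identifies this equivalence with the map induced by $\Phi^G$, completing the proof.
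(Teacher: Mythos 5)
Your proof is correct and follows essentially the same approach as the paper: both reduce to a space-level statement via the colimit description of mapping spectra in the formal inversion, then exploit that $\widetilde{E\proper_\I}^H \simeq *$ for all proper $H \lneq G$. The only difference is organizational: rather than specializing to the generators $\Sigma^\infty_+ G/H$ and splitting into the cases $H \neq G$ (vanishing) and $H = G$ (explicit Freudenthal computation of the colimit), the paper keeps $X = \Sigma^\infty Y$ arbitrary and instead proves the uniform lemma that $\map_{\spc_{G,*}^\I}(A, Z \wedge \widetilde{E\proper_\I}) \to \map_{\spc_{G,*}^\I}(A^G, Z \wedge \widetilde{E\proper_\I})$ is an equivalence for all $A, Z$ by induction over the cells of $(A, A^G)$, all of which have isotropy $\neq G$; this slightly slicker route makes the passage to the colimit automatic and avoids invoking Freudenthal.
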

\begin{proof}
    The proof for this is the same as for $\spectra_G$, see e.g. \cite[Proposition 3.3.8]{Schwede_Global}:
    As $\Sigma^\infty_+ G/G$ is compact, both sides commute with colimits and finite limits in $X$ and it suffices to prove the corresponding statement on mapping spaces for $X = \Sigma^\infty Y$ and $Y \in \spc_{G, *}^\I$.
    The map in question then identifies with the map
    \begin{equation*}
        \colim_n \map_{\spc_{G, *}^\I}(V^{\wedge n}, V^{\wedge n} \wedge X \wedge \widetilde{E\proper_\I}) \xrightarrow{(-)^G} \colim_n \map_{\spc*}((V^G)^{\wedge n}, (V^G)^{\wedge n} \wedge X^G).
    \end{equation*}
    It suffices to show that for any two $A, Z \in \spc_{G, *}^\I$, the map
    \begin{equation}\label{eq:geom_fixed_points_eptilde}
        \map_{\spc_{G, *}^\I}(A, Z \wedge \widetilde{E\proper_\I}) \to \map_{\spc_{G, *}^\I}(A^G, Z \wedge \widetilde{E\proper_\I})
    \end{equation}
    induced by the inclusion $A^G \to A$ is an equivalence.
    This recovers the map obtained by taking fixed points under the identification $\map_{\spc_{G, *}^\I}(A^G, Z \wedge \widetilde{E\proper_\I}) \simeq \map_{\spc_*}(A^G, Z^G)$.
    Now $A$ is obtained from $A^G$ by attaching cells of orbit type $G/H \in \I$ with $H \neq G$.
    As $\widetilde{E\proper_\I}^H \simeq *$, this shows that \cref{eq:geom_fixed_points_eptilde} is an equivalence.
\end{proof}

\begin{lem}\label{lem:geoemtric_fixed_points_detection}
    Suppose that $X \in \spectra_G^\I$ such that $X^H \simeq 0$ for all proper subgroups $H \lneq G$ with $G/H \in \I$.
    Then the map $X^G \to (X\otimes \widetilde{E\proper_\I})^G \simeq \Phi^G(X)$ is an equivalence.
\end{lem}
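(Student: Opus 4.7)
The plan is to use the cofibre sequence $(E\proper_\I)_+ \to S^0 \to \widetilde{E\proper_\I}$ defining $\widetilde{E\proper_\I}$. Tensoring with $X$ and applying $(-)^G = \mapsp_{\spectra_G^\I}(\Sigma^\infty_+ G/G, -)$, which is exact, yields a fibre sequence of spectra
\begin{equation*}
    (X \otimes (E\proper_\I)_+)^G \to X^G \to (X \otimes \widetilde{E\proper_\I})^G.
\end{equation*}
Hence it suffices to prove that $(X \otimes (E\proper_\I)_+)^G \simeq 0$.

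By definition of $E\proper_\I$ as a presheaf on $\I$, it can be written as a colimit of orbits $G/H$ with $G/H \in \I$ and $H \lneq G$. Adding a disjoint basepoint and smashing with $X$, I would use that both $X \otimes -$ and $(-)^G$ preserve colimits; the latter because $\Sigma^\infty_+ G/G$ is a compact object of $\spectra_G^\I$ by \cref{thm:formal_inversion_homotopy_representation}(2), so corepresenting out of it preserves filtered colimits, and by exactness it preserves finite colimits as well.

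It therefore remains to show that for each orbit $G/H \in \I$ with $H \neq G$ one has $(X \otimes \Sigma^\infty_+ G/H)^G \simeq 0$. Here I would invoke the self-duality of the orbits $\Sigma^\infty_+ G/H$ in $\spectra_G^\I$ established in \cref{thm:formal_inversion_homotopy_representation}(2), which gives
\begin{equation*}
    (X \otimes \Sigma^\infty_+ G/H)^G \simeq \mapsp_{\spectra_G^\I}(\Sigma^\infty_+ G/G, X \otimes \Sigma^\infty_+ G/H) \simeq \mapsp_{\spectra_G^\I}(\Sigma^\infty_+ G/H, X) = X^H,
\end{equation*}
which vanishes by hypothesis. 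Assembling these observations, $(X \otimes (E\proper_\I)_+)^G$ is a colimit of zero spectra, hence vanishes, which completes the argument. The only point that deserves a moment's care is justifying that $(-)^G$ commutes with the colimit presenting $E\proper_\I$; this is automatic once one uses that $\Sigma^\infty_+ G/G$ is compact together with stability.
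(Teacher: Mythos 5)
Your proof is correct and takes essentially the same approach as the paper: reduce to showing $(X \otimes (E\proper_\I)_+)^G \simeq 0$, then compute term-by-term using self-duality of orbits to identify $(X \otimes \Sigma^\infty_+ G/H)^G \simeq X^H \simeq 0$. The only cosmetic difference is that you present $(E\proper_\I)_+$ as a colimit of representables and pass the corepresentable $(-)^G$ through that colimit using compactness and stability, whereas the paper phrases the same idea as an induction over a relative cell structure (applied to any pointed $A$ with $A^G \simeq *$); these are interchangeable.
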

\begin{proof}
    Equivalently, we can show that the fibre $(X \otimes (E \proper_\I)_+)^G$ is trivial. 
    The argument is an adaption of \cite[Proposition 3.2.19]{Schwede_Global}.
    We prove the more general assertion that for every $A \in \spc_{G, *}^\I$ with $A^G \simeq *$ we have $(X \otimes A)^G \simeq 0$.
    Note that $A$ can be built from $*$ by attaching cells of orbit type $G/H \in \I$ with $H \neq G$.
    The statement follows from induction over this cell structure using that $(X \otimes (G/H_+ \wedge S^n))^G \simeq \Sigma^n \mapsp(G/H_+,X) \simeq \Sigma^n X^H \simeq 0$ by the selfduality of orbits.
\end{proof}

\begin{rmk}\label{rem:geometric_fixed_points_conservative}
    In the case $\I = \{G/G, G/e\}$ when $G$ is periodic, \cref{lem:geoemtric_fixed_points_detection} can be used to show that the geometric fixed points $\Phi^e, \Phi^G \colon \spectra_G^\I \to \spectra$ are jointly conservative.
    A similar argument as in \cite[Proposition 3.3.10]{Schwede_Global} even shows that for general $\I$, the geometric fixed points $\Phi^H \colon \spectra_G^\I \to \spectra$ for $G/H \in \I$ are jointly conservative.
\end{rmk}

\printbibliography
\end{document}